\newtheorem{theorem}{Theorem}
\DeclareMathOperator*{\argmin}{arg\,min}
\newcommand{\red}[1]{\textcolor{black}{#1}}
\newcommand{\blue}[1]{\ifmmode {\textcolor{black}{#1}}\else {\textcolor{black}{#1}}\fi}
\tikzstyle{process} = [rectangle,minimum width=2cm,minimum height=1cm,text centered,text width =4cm,draw=black]
\newcommand{\bmv}{\bm{v}}
\newcommand{\bmm}{\bm{m}}
\newcommand{\bmn}{\bm{n}}
\newcommand{\bmq}{\bm{q}}
\newcommand{\bmu}{\bm{u}}
\newcommand{\m}{\bmm}
\newcommand{\intst}[1]{\langle\!\langle#1\rangle\!\rangle_h}
\newcommand{\ints}[1]{\langle#1\rangle_h}
\newtheorem{proposition}{Proposition}[section]
 \newtheorem{definition}[theorem]{Definition}
 \newtheorem{example}[theorem]{Example}
\newtheorem{remark}[theorem]{Remark}
\numberwithin{equation}{section}
\newcommand{\nn}{\bm{n}}
\newcommand{\uu}{\bm{u}}
\newcommand{\mm}{\bm{m}}
\newcommand{\so}{s}
\newcommand{\nnu}{\bm{\nu}}
\newcommand{\pphi}{\phi}
\newcommand{\Pphi}{\Phi}
\newcommand{\bPphi}{\bm{\Phi}}
\newcommand{\ssigma}{\bm{\sigma}}
\newcommand{\stint}[1]{\langle\!\langle #1 \rangle\!\rangle_h}
\newcommand{\sint}[1]{\langle #1 \rangle_h}
\def\E{\mathcal{E}}
\def\pstar{\mathcal{P}_*}
\shorttitle{MFC of droplet dynamics}
\title{Mean field control of droplet dynamics with high order finite element computations}
\author{Guosheng Fu\aff{1},
  Hangjie Ji\aff{2},
Will Pazner\aff{3},
  \and Wuchen Li\aff{4} \corresp{ \email{wuchen@mailbox.sc.edu}}}
\affiliation{\aff{1} Department of Applied and Computational Mathematics and Statistics, University of Notre Dame, USA.
\aff{2} Department of Mathematics,
North Carolina State University, USA.
\aff{3}  Fariborz Maseeh Department of Mathematics and Statistics, Portland State University, Portland, USA.
\aff{4} Department of Mathematics, University of South Carolina, Columbia, USA.}
\date{\today}% It is always \today, today,
\begin{document}

\maketitle 

\begin{abstract}
Liquid droplet dynamics are widely used in biological and engineering applications, which contain complex interfacial instabilities and pattern formation such as droplet merging, splitting, and transport. This paper studies a class of mean field control formulations for these droplet dynamics, which can be used to control and manipulate droplets in applications. We first formulate the droplet dynamics as gradient flows of free energies in modified optimal transport metrics with nonlinear mobilities. 
We then design an optimal control problem for these gradient flows. \blue{As an example, a lubrication equation for a thin volatile liquid film laden with an active suspension is developed, with control achieved through its activity field.}
Lastly, we apply the primal-dual hybrid gradient algorithm with high-order finite element methods to simulate the proposed mean field control problems. Numerical examples, including droplet formation, bead-up/spreading, transport, and merging/splitting on a two-dimensional spatial domain,  demonstrate the effectiveness of the proposed mean field control mechanism.

\end{abstract}

%\pacs{Valid PACS appear here}% PACS, the Physics and Astronomy
               % Classification Scheme.
\begin{keywords}
droplet dynamics; optimal transport; mean field control; finite element methods; primal-dual hybrid gradient algorithms.
%Authors should not enter keywords on the manuscript, as these must be chosen by the author during the online submission process and will then be added during the typesetting process.
\end{keywords}

\section{Introduction}
\label{sec:intro}
The dynamics of liquid droplets on solid substrates have been extensively investigated for the past two decades due to their significant connections to a wide range of biological and engineering applications, including heat and mass transfer \citep{ji2018instability}, vapor and particle capture \citep{sadeghpour2021experimental,sadeghpour2019water}, filtration and digital microfluidics \citep{kim2001micropumping}. These droplet systems often exhibit complex pattern formation rendered by the interactions between the surface tension of the free interface and other physical effects. Fundamental droplet manipulation operations, such as droplet transport, merging, and splitting, have been explored experimentally through various mechanisms such as electrodewetting \citep{chu2023electrohydrodynamics,li2019ionic}, electrochemical oxidation \citep{khoshmanesh2017liquid}, and coalescence-induced propulsion \citep{10.1063/5.0124560}. \blue{For droplets composed of active matters, such as self-propelled swimmers and driven bio-filaments, recent studies have also explored methods to control these active droplets by manipulating the activity field or evaporation \citep{shankar2022optimal,chandel2024evaporation}.}
Developing robust control mechanisms for droplet dynamics by varying external fields is essential to optimize the manipulation of droplets for practical applications. In this work, we will focus on mean field control of droplet dynamics in volatile \blue{active} thin liquid films.  

Thin layers of viscous fluids spreading on solid substrates, often referred to as \emph{coating flows}, have been studied in the context of tear films in human eyes and surface painting processes.
When the solid substrate is hydrophobic or non-wetting, the fluid on the substrate spontaneously undergoes a sequence of instabilities and morphological changes, leading to the formation of dry spots and an array of interacting droplets \citep{glasner2003coarsening,ji2024coarsening}. This fascinating \emph{dewetting} phenomenon arises from the interplay of the intermolecular forces between the solid substrate and the fluid and the surface tension of the fluid. 

In the limit of low Reynolds number, lubrication theory and thin-film models for free-surface flows have been widely used to model the droplet dynamics \citep{oron1997long}. Specifically, 
a classical \blue{non-dimensional} long-wave thin-film equation can be cast into a gradient dynamics form \citep{thiele2016gradient}:
\begin{subequations}
\label{eq:model}
\begin{equation}
    \frac{\partial h}{\partial t} = \nabla\cdot \left( V_1(h)\nabla \frac{\delta}{\delta h}\mathcal{E}(h)\right) - V_2(h)\frac{\delta}{\delta h}\mathcal{E}(h), \quad \text{ on } [0, T] \times\Omega,
\label{gradientFlow}
\end{equation}
where $h(t,\bm{x})$ represents the free surface height of the fluid film, $\E$ is an energy functional, 
and $V_1(h)\ge 0$ and $V_2(h)\ge 0$ are mobility functions associated with mass-conserving and non-mass-conserving contributions to the dynamics.  We assume homogeneous Neumann boundary conditions $V_1(h)\nabla \frac{\delta}{\delta h}\mathcal{E}(h)\cdot \bm\nu = 0$ on the domain boundary $\partial\Omega$, where $\bm\nu$ is the outward normal direction on $\partial\Omega$.

For a volatile \blue{inactive} thin film on a hydrophobic substrate heated or cooled from below \citep{ajaev2001steady,ajaev2005spreading,ji2018instability}, vapor condensation or fluid evaporation occurs and leads to non-mass-conserving dynamics. In this case, typical mobility functions in \eqref{eq:model} take the forms
\begin{equation}
\label{mob}
V_1(h) = h^3,\quad V_2(h) = \displaystyle\frac{\gamma}{h + K}, 
\end{equation}
where $V_1(h)$ originates from the no-slip boundary condition at the liquid-solid interface, $V_2(h)$ characterizes the non-mass-conserving liquid evaporation or condensation,
$\gamma \ge 0$ is a phase change rate, and $K > 0$ is a kinetic parameter.
The energy $\mathcal{E}(h)$ is given by
\begin{equation}
% \mathcal{E}(h) = \int_{\Omega} \tfrac{1}{2}
% |{\nabla{h}}|^2 + U(h)\, dx\,,
\mathcal{E}(h) = \int_{\Omega} \frac{\alpha^2}{2}
|{\nabla{h}}|^2 + U(h)\, dx\,,
%\qquad 
%\frac{d\mathcal{E}}{dt} = -
%\int_0^L h^3\|\nabla P\|^2~dx  \le 0\,
\label{Energy}
\end{equation}
where \blue{$\tfrac{\alpha^2}{2}
|{\nabla{h}}|^2$ represents the contribution of the surface energy of the free interface with $\alpha>0$}, and
% $\alpha > 0$, 
% and
$U(h)$ is a local free energy relating to the wettability property of the substrate \citep{bertozzi2001dewetting} and the evaporation and condensation effects. When the substrate is partially wetting or hydrophobic, a simple free energy is chosen as 
\blue{
$
U(h)= \frac{1}{3}(\frac{\epsilon}{h})^3
-\frac{1}{2}(\frac{\epsilon}{h})^2 - \pstar h,
$
}
and the corresponding disjoining pressure $\Pi(h)$ are given by
\begin{equation}
\label{disj}
U'(h)=\Pi(h)-\pstar,\qquad
\Pi(h)= \frac{\epsilon^2}{h^3}\left(1-\frac{\epsilon}{h}\right).
\end{equation}
Here, \blue{the parameter $\epsilon$ in $\Pi(h)$ sets a positive $O(\epsilon)$ lower bound for the liquid height at which the attractive van der Waals forces balance with the short-range Born repulsion. This lower bound also determines the thickness of a precursor layer connecting the droplets, which is commonly assumed in thin-film literature to model the behavior of the contact line and liquid films on a prewetted layer
\citep{oron2001dynamics,bertozzi2001dewetting,ji2018instability,dukler2020theory}.}
\blue{
The constant parameter $\pstar$ gives the influence of the temperature difference between the liquid film and the surrounding vapor phase. When the film is uniformly heated or cooled from below with a constant temperature imposed at the solid-liquid interface, $\pstar$ remains constant in space and time \citep{ji2018instability}. 
% \begin{equation}
%     \pstar = \Theta^*-\Theta
% \label{eq:pstar}
% \end{equation}
% gives the influence of the temperature difference between the liquid film and the surrounding vapor phase, where $\Theta = \Theta(x, y, t)$ is the rescaled temperature at the fluid-solid interface, $\Theta^*$ is the liquid saturation temperature
% , $C = \mu U/\sigma$ is the capillary number 
}

The dynamic pressure $P$ of the free surface is given by
\begin{equation}
 P(h) = \frac{\delta \mathcal{E}}{\delta h}=\Pi(h)-\pstar- \alpha^2{\nabla^2 h},
 % P(h) = \frac{\delta \mathcal{E}}{\delta h}=\Pi(h)-\pstar- {\nabla^2 h},
\label{eq:pressure_condense}
\end{equation}
where $\alpha^2\nabla^2 h$ gives the linearized curvature of the free surface.
\end{subequations}
We also have the following energy dissipation property: 
\begin{equation}
\label{e-law}
\frac{d\E}{dt} = - \mathcal{I}(h) \le 0, 
% \int_\Omega (V_1(h) |\nabla P|^2+V_2(h)P^2)~dx  \le 0.
\end{equation}
where the dissipation functional  
% above right-hand side (dissipation rate)
\begin{align}
\label{fisher}
\mathcal{I}(h) :=
\int_\Omega \left(V_1(h)|\nabla P(h)|^2 + V_2(h)|P(h)|^2\right)dx
\end{align}
is often named the generalized {\it Fisher information functional}. 

% paragraph on active droplet
\blue{
Droplets laden with a suspension of active matter, known as \emph{active drops}, have also been the focus of many studies in fluid mechanics \citep{marchetti2013hydrodynamics,maass2016swimming}. These active droplets consist of internally driven units or self-propelled particles that draw energy from the surrounding and induces active stresses to the background fluids, leading to more complex dynamics and pattern formation \citep{joanny2012drop}. Many studies have focused on the experiments, modeling, and fundamentals of active fluids in various applications \citep{loisy2019tractionless,trinschek2020thin,whitfield2016instabilities,aditi2002hydrodynamic}. For instance, \cite{adkins2022dynamics} experimentally and analytically studied the phase-separating fluid mixtures of active liquid interfaces driven by mechanical activities.
\cite{chandel2024evaporation} discussed the spontaneous puncturing of active droplets induced by evaporation-driven mass loss. \cite{shankar2022optimal} studied the optimal transport and control of mass-conserving active drops by controlling the activity.
For a comprehensive review, readers are referred to \cite{michelin2023self}. The coupling of non-mass-conserving dynamics and internal dynamics of active drops presents opportunities for designing new control mechanisms. 
In this work, we consider the mean field control of droplet dynamics by controlling the activity field.}

Despite the wealth of modeling and analytical results on droplet dynamics, the field of controlling these free surface flows is still in its early stages of development.
For instance, researchers have explored reduced-order-model-based control of liquid films governed by the classical Kuramoto-Sivashinsky (KS) equation, employing distributed control across the whole domain
\citep{armaou2000feedback, christofides2000global, lee2005reduced}. Boundary control and optimal control of the KS equation have also been studied in the works of \cite{katz2020finite,al2018linearized,coron2015fredholm, maghenem2022boundary,liu2001stability,tomlin2019optimal}.  

The literature on controlling thin-film equations is relatively limited. 
For example, \red{the work of
% \cite{wray2014electrostatic} 
\cite{WRAY2015172}
studied the control of evaporating particle laden droplets via an electric field to suppress the ``coffee-stain'' effect.}
\cite{klein2016optimal} investigated optimal control of a simplified thin-film equation with only the fourth-order term.  The work of \cite{samoilova2019feedback} considered a linear proportional control for suppressing the Marangoni instability in a thin liquid film evolving on a plane.
\cite{cimpeanu2021active} proposed an active control strategy of liquid film flows by incorporating information from reduced-order models. The work of \cite{wray2022electrostatic} focused on the electrostatic control for thin films underneath an inclined surface. 
\cite{shankar2022optimal} studied optimal transport and control of droplets of an active fluid. More recently, 
\cite{BISWAL2024133942} studied the optimal boundary control of a thin-film equation describing thin liquid films flowing down a vertical cylinder. The mean field control of reaction-diffusion equations \citep{Mielke11,FuOsherLi,LiLeeOsher22} and regularized conservation laws \citep{li2021controlling,li2022controlling} have been studied. In this direction, a recent work of \cite{gao2024mean} also discussed the control of coherent structures in turbulent flows using mean field games.

In this study, we demonstrate the application of mean field control (MFC) techniques for manipulating droplet dynamics within the framework of thin-film equations. The objective of MFC is to design and transport the 
% formulation of 
droplets governed by the classical lubrication theory. 
 See Figure \ref{fig:schematic} for an example of the transport and deformation of a droplet on a two-dimensional spatial domain from the initial surface height profile $h_0(x,y)$ to the target height profile \blue{$h_{T}(x,y)$}.
\blue{To demonstrate the application of optimal control and motivate the design of the mean field control formulation, in Section \ref{sec:model} we derive a lubrication model for a thin volatile active liquid film, whose dynamics can be controlled through its activity field.}
We then illustrate the formulation of optimal control of thin-film equations as below.  The constraint is given with the background of original physical dynamics, where the control variables contain both vector field and source terms with the above-mentioned nonlinear mobility functions  $V_1(h)$ and $V_2(h)$. The minimization is then taken under the kinetic energy originating from the generalized Fisher information functional, adding suitable potential energy and terminal functionals. We then derive two equivalent formulations, for the latter one we develop the minimization systems of the proposed MFC problems in Proposition \ref{MFCD}. They can be viewed as the forward-backward controlled systems of thin-film dynamics.

\begin{figure}
    \centering
\includegraphics[width=0.5\textwidth]{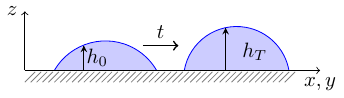}
    \caption{Schematic figure for the mean field control from the initial droplet profile $h_0(x,y)$ to the target droplet profile \blue{$h_{T}(x,y)$}.}
    \label{fig:schematic}
\end{figure}

We remark that the proposed MFC problem is motivated by the optimal transport theory \citep{villani2008optimal}. We study the optimal control problem associated with the gradient flow formulation from the thin-film equation in a generalized Wasserstein space. In particular, the control formulation itself is a generalization of the Benamou-Brenier formula \citep{benamou2000computational}, where we further consider the evolution of fluid dynamics under the thin-film equation as background. The proposed minimization system is the generalization of Wasserstein-2 type geodesics.

%{\color{red}Need a sketch of MFC system.}

In simulations, our approach also utilizes high-order finite element computations to achieve this objective. Compared to previous work in \cite{FuOsherLi}, we remark that 
the second-order Laplacian term in the dynamic pressure $P$ in \eqref{eq:pressure_condense} brings additional difficulties in the simulation of proposed MFC problems. We construct several new constraints and Lagrange multipliers associated with primal-dual hybrid gradient methods \citep{Chambolle, CarrilloWangWei2023_structurea}
to handle the constraints associated with the dynamic pressure $P$.

The structure of the paper is as follows. 
\blue{
In Section \ref{sec:model}, the model for viscous volatile thin films with an active suspension on a partially wetting substrate is formulated.}
In Section \ref{sec2}, we discuss the mean field control of droplet dynamics using the formulated model. In Section \ref{sec3}, the high-order space-time finite element discretization and its associated primal-dual hybrid gradient optimization solver for the proposed mean field control problem is presented. Numerical results for the mean field control of droplet dynamics using the developed high-order finite element computations are presented in Section \ref{sec4}, followed by concluding remarks and discussions in Section \ref{sec5}.

\section{Model formulation}
\label{sec:model}
\blue{
In this section, we develop a lubrication model for a thin volatile liquid film laden with an active suspension on a two-dimensional solid substrate. The liquid properties, including surface tension $\sigma$, dynamic viscosity $\mu$, and density $\rho$, are assumed constant. We follow the work of \cite{ajaev2005evolution} and \cite{ ji2018instability} to describe the evaporation and condensation effects using a one-sided model. To incorporate the active suspension into the film dynamics, we follow the approach of \cite{shankar2022optimal} and  consider an active stress that is proportional to the film thickness and describes strong ordering along the $x, y$ directions.
We show below the derivation of the governing equations and discuss the combined effects of active stresses and non-mass-conserving phenomena.
}

\blue{Following \cite{ajaev2001steady}, we choose the scales for the system as follows: the characteristic length scale $\mathscr{L}$ in the $x, y$ direction is set as the initial radius of the droplet $R_0$. The length scale $\mathscr{H}$ in the vertical direction $z$ is $C^{1/3}R_0$, where $C = \mu U/\sigma$ is the capillary number. We assume that the aspect ratio $\epsilon = \mathscr{H}/\mathscr{L} = C^{1/3} \ll 1$. The velocity scale in the $x, y$ direction is $U = k T_s^* / (\rho \mathcal{L} R_0)$, where $k$ is the thermal conductivity of the liquid, $\mathcal{L}$ is the latent heat of vaporization per unit mass, and $T_s^*$ is the saturation temperature. The characteristic vertical velocity is $C^{1/3}U$, and the pressure and time scales are given by $C^{1/3}\sigma/R_0$ and $R_0/U$, respectively. Given a dimensional temperature field $T^*$, we express the scaled non-dimensional temperature as $T = (T^* - T_s^*)/(C^{2/3}T_s^*)$.
Additionally, we define the in-plane position $\bm{x} = (x,y)$, $\nabla_{\perp} = (\partial_x, \partial_y)$, and the non-dimensional velocity field of the fluid $\bm{u} = (\bm{u}_{\perp}, w)$, where $\bm{u}_{\perp}$ represents the velocity in the $x$ and $y$ directions.
}

\blue{Under the lubrication approximation with $\epsilon\ll 1$, the non-dimensional Stokes equation reduces to the leading-order equations
\begin{equation}
    -\nabla_{\perp} \hat{P}  + \frac{\partial^2}{\partial z^2}\bm{u}_{\perp} = 0, \quad
    -\frac{\partial \hat{P}}{\partial z} = 0,
\label{eq:stokes_leadingOrder}
\end{equation}
where $\hat{P}$ is the non-dimensional pressure of the liquid.
At the solid-liquid interface $z = 0$, we impose the no-slip and no-penetration boundary conditions
\begin{equation}
\bm{u}_{\perp} = \bm{0}, \quad w = 0, \quad \text{at } z = 0.
\label{eq:NS-BCs}
\end{equation}
The kinematic boundary condition at the free interface $z = h(t,\bm{x})$ is given by
\begin{equation}
    \frac{\partial h}{\partial t} = w - \bm{u}_{\perp}\cdot \nabla_{\perp} h - J, \quad \text{at } z = h(t, \bm x),
\label{eq:kinematicBC}
\end{equation}
where $J$ represents the evaporative flux due to evaporation or condensation effects.
Using the incompressibility condition $\nabla\cdot \bm{u} = 0$ and boundary conditions \eqref{eq:NS-BCs} in the kinematic boundary condition \eqref{eq:kinematicBC}, we derive
\begin{equation}
    \frac{\partial h}{\partial t} + \nabla_{\perp} \cdot \left(\int_0^h \bm{u}_{\perp}~dz\right) = -J.
\label{eq:conservation}
\end{equation}
We assume that the temperature field $T(\bm{x},z)$ across the liquid is quasi-static and is linear in $z$ in the leading-order under the lubrication approximation, satisfying
\begin{equation}
    \frac{\partial^2 T}{\partial z^2} = 0.
\label{eq:temp1}
\end{equation}
This assumption is valid for liquid films under weak evaporation or condensation effects \citep{burelbach1988nonlinear}. 
At the solid-liquid interface, we impose the boundary condition
\begin{equation}
    T = \Theta \quad \text{at } z = 0,
\label{eq:temp_BC}
\end{equation}
where $\Theta$ represents a scaled temperature difference between the solid-liquid interface and the saturation temperature. 
For ${\Theta} < 0$, we anticipate dominant evaporation dynamics, while for large $\Theta > 0$, we expect condensation dynamics. For simplicity, we assume that $\Theta$ is constant in space and time.}

\blue{
At the liquid-air interface, the conservation of energy and the shear stress condition are expressed as
\begin{equation}
    J = -\frac{\partial T}{\partial z}, \quad \frac{\partial \bm{u}_{\perp}}{\partial z} = 0.
\label{eq:energy_shear}
\end{equation}
The flux $J$ is related to the scaled interfacial temperature $T^i$ and pressure jump at the liquid-vapor interface by
\begin{equation}
    KJ = \gamma (\hat{P}-P_v) + T^i,
\label{eq:J_P}
\end{equation}
where $P_v$ is the non-dimensional vapor pressure. The constants $K$ and $\gamma$ are defined by
$K = (\rho U \sqrt{2\pi \bar{R}T_s^*})/(2\rho_v \mathcal{L}C^{1/3})$ and $\gamma = \sigma/(\mathcal{L}\rho R_0 C^{1/3})$, where $\rho_v$ is the vapor density, and $\bar{R}$ is the gas constant per unit mass.
Solving equations \eqref{eq:temp1} -- \eqref{eq:J_P} yields the form of the evaporative flux
\begin{equation}
    J = \frac{\gamma(\hat{P}-P_v) + \Theta}{K+h}.
\label{eq:Jflux_form}
\end{equation}
From equations \eqref{eq:stokes_leadingOrder} and the boundary conditions \eqref{eq:NS-BCs}$_1$ and \eqref{eq:energy_shear}$_2$, we obtain
\begin{equation}
    \bm{u}_{\perp} = \tfrac{1}{2}(z^2 - 2hz)\nabla_{\perp}\hat{P}.
\end{equation}
}
\par \blue{We adopt an active stress tensor $\bm\tau^a = \hat{\eta} \zeta h (\hat{\bm n}\hat{\bm n} - \bm{I}/3)$ to account for the contribution of the active suspension to the stress tensor of the liquid, where $\hat{\bm n}$ is the orientation field of the active agents in the suspension \citep{aditi2002hydrodynamic}. 
Here, $\zeta(t, \bm x)$ represents the activity field originating from the forcing exerted by the active suspension, and $\hat{\eta}$ is a scaling parameter. The activity field $\zeta(t, \bm x)$ can take either sign: positive and negative values of $\zeta$ correspond to contractile and extensile stresses \citep{joanny2012drop}. This model assumes that the active stress depends on the local density of suspension and is applicable to droplets of coherently swimming suspensions and ordered collection of filaments \citep{loisy2019tractionless}. We further assume that the rapid orientational relaxation in the vertical direction is negligible, and the orientation field is almost parallel to the substrate with $\hat{\bm n} \simeq (n_1(t,\bm x), n_2(t, \bm x), 0)$, where the components $n_1$ and $n_2$ take the vertically averaged values for the activity strength
\citep{trinschek2020thin, shankar2022optimal}.}

\blue{The total stress in the liquid 
$\bm{\tau} = -(\hat{P}-\Pi(h))\bm{I} + \mu[\bm \nabla \bm u +  (\bm \nabla \bm u)^T]+ \bm{\tau}^a$
incorporates the liquid pressure, the disjoining pressure $\Pi(h)$, a viscous stress, and the active stress. 
The balance of normal stresses at the liquid-vapor interface in the leading order gives
\begin{equation}
    \hat{P} = P_v + \Pi(h) - \nabla_{\perp}^2 h - \eta h \zeta(t,\bm x) ,
\label{eq:normalStress}
\end{equation}
where $\Pi(h)$ is the disjoining pressure, $\nabla_{\perp}^2 h$ is the linearized surface tension, $P_v$ originates from the stress tensor of the vapor, the parameter $\eta = \hat{\eta}/3$, and the vapor recoil is neglected \citep{oron1997long}.
Substituting \eqref{eq:Jflux_form} -- \eqref{eq:normalStress} into equation \eqref{eq:conservation}
yields the evolution equation for $h$,
\begin{equation}
 \frac{\partial h}{\partial t} = \nabla\cdot \left[ h^3\nabla (\Pi(h) - \nabla^2 h - \eta\zeta(t,\bm x) h)\right] - \frac{\gamma\left(\Pi(h) - \nabla^2 h - \eta\zeta(t,\bm x) h - \pstar\right)}{K+h}.
\label{eq:evo}
\end{equation}
Here, we have replaced $\nabla_{\perp}$ by $\nabla$ for simplicity and rescaled the variables by $t \to 3t$, and $\gamma \to \gamma/3$
% and $\Theta\to \Theta/(3\gamma)$ 
to absorb a constant factor of $3$ in the mobility function, and the constant parameter $\pstar$ is given by $\pstar = \Theta/(3\gamma)$. This model assumes that the active suspension in the droplet is not influenced by the temperature field and neglects Marangoni effects. 
}

\blue{In this work, we consider the mean field control of the droplet dynamics via the activity field $\zeta(t, \bm{x})$. To separate the control variables from the uncontrolled ones, it is convenient to rewrite equation \eqref{eq:evo} as 
\begin{equation}
    \frac{\partial h}{\partial t} + \nabla \cdot (V_1(h)\bm{v}_1) - V_2(h) v_2 = \nabla\cdot \left[ V_1(h)\nabla P(h)\right] - V_2(h)P(h),
\label{generic_model}
\end{equation}
where the mobility functions $V_1(h)$ and $V_2(h)$ are defined in \eqref{mob}, 
$\bm{v}_1$ and $v_2$ encode the control variables to the system,
\begin{equation}
\bm v_1 = \nabla(\eta\zeta(t,\bm x) h),
\quad v_2 = \eta\zeta(t,\bm x) h,
% \quad v_2 = \eta\zeta(t,\bm x) h + \Theta(t) - \pstar,
\end{equation}
and $P(h)$ is the dynamic pressure for the volatile inactive thin film model defined in \eqref{eq:pressure_condense} with $\alpha = 1$.
}
\blue{For volatile inactive fluids with $\zeta\equiv 0$,
% and $\Theta \equiv \pstar$, 
the model \eqref{generic_model} is consistent with the volatile thin-film model \eqref{eq:model}.}

\blue{
The control variable $\zeta$ enters the system through both a diffusion term and a source term. In the mass-conserving case where $\gamma = 0$ (hence $V_2\equiv 0$), the activity field $\zeta$ affects the local contraction or spreading of the droplet, similar to the setting investigated in \cite{shankar2022optimal}. When $\gamma > 0$, the interplay between the activity field and the evaporation/condensation effects can lead to more complex and interesting dynamics. 
One important quantity is the total mass of the fluid, $\mathcal{M}(t)$, and its rate of change due to the non-mass-conserving contributions. By applying Neumann boundary conditions and integrating equation \eqref{generic_model}, we obtain
\begin{equation}
\label{mc}
    \mathcal{M}(t) = \int_{\Omega} h~d\bm x, \quad \frac{d\mathcal{M}}{dt} = \gamma \int_{\Omega} \frac{\zeta h - P}{K + h}~d\bm x,
\end{equation}
which indicates that for $\gamma > 0$, the mass may locally increase or decrease depending on the local relative importance of the activity field and the pressure, i.e., $\zeta h > P$ or $\zeta h < P $, respectively.
}

% \blue{
% To generalize the model \eqref{volatileActive_main} to other droplet models with controllable mass-conserving and non-mass-conserving fluxes, we consider the evolution equation
% \begin{equation}
%     \frac{\partial h}{\partial t} + \nabla \cdot (V_1(h)\bm{v}_1) - V_2(h) v_2 = \nabla\cdot \left[ V_1(h)\nabla P(h)\right] - V_2(h)P(h),
% \label{generic_model}
% \end{equation}
% where the drift vector field $\bm v_1$ and the reaction rate $v_2$ encode the controls to the system.
% For $\bm{v}_1 = \nabla(\zeta h)$, $v_2 = \zeta h + \Theta $, equation \eqref{generic_model} reduces to \eqref{volatileActive_main}. For nonactive fluid with a constant temperature field, $\zeta\equiv 0$ and $\Theta \equiv 0$, the model \eqref{generic_model} is consistent with the volatile thin-film model studied in \citep{ji2018instability, ajaev2005evolution}.
% }

\blue{
We remark that
the PDE \eqref{generic_model}, with varying functional forms of mobility functions, pressure, and controls, can be adapted to other types of control problems for both mass-conserving and non-mass-conserving thin-film models. While most existing works on thin-film control \citep{klein2016optimal, samoilova2019feedback,shankar2022optimal}
focus on the mass-conserving case with $V_2\equiv 0$, here we illustrate a few examples considered in the literature.
}
\begin{example}
The work of
% Klein and Prohl 
\cite{klein2016optimal} addresses an optimal control problem in the divergence form,
\begin{equation}
\label{eq:klein}
    \partial_t h = \partial_x (\lambda |h|^{a}\partial_x P) + \partial_x u , \quad P = -h_{xx},
\end{equation}
where $u(t,x)$ is the external control, $a > 1$, and $\lambda > 0$. \blue{This problem characterizes the control of thin film deposition on silicon wafers during electronic chip fabrication.}
The equation \eqref{eq:klein} is related to the model \eqref{generic_model} with $V_1 = \lambda |h|^{a}$, $V_2 \equiv 0$, and $\bm v_1 = -u/V_1$.
% and $\beta=1$. 
\end{example}
\begin{example}
The work of 
% Samoilova and Nepomnyashchy 
\cite{samoilova2019feedback} aimed to suppress the Marangoni instability in a thin film heated from below using a lubrication equation
\begin{equation}
    \partial_t h = \nabla\cdot \left[\tfrac{1}{3}h^3\nabla P + \tfrac{Ma}{2}h^2\nabla (\Theta-h)\right],
\label{eq:Marangoni_control}
\end{equation}
coupled with a heat transfer equation for the controlled temperature $\Theta$. One can rewrite \eqref{eq:Marangoni_control} into the form of \eqref{generic_model} by setting $V_1 = h^3/3$, $V_2 \equiv 0$, and
$\bm v_1 = \tfrac{3Ma}{2h}\nabla(h-\Theta)$.
% and $\beta=1$.
\end{example}
\begin{example}
In the recent work on optimal transport and control of active droplets by 
% Shankar et al. 
\cite{shankar2022optimal}, the active droplet is modelled by 
\begin{equation}
    \partial_t h = \partial_x \left[\tfrac{1}{3\eta}h^3\partial_x (P -\zeta h)\right], \quad P = -\gamma h_{xx},
\end{equation}
where $\zeta(t,x)$ represents the controllable activity of suspension in the droplet. Again, this problem corresponds to \eqref{generic_model} with $V_1 = h^3/(3\eta)$, $V_2\equiv 0$, and $\bm v_1 = (\zeta h)_x$.
% and $\beta=1$.
\end{example}
\blue{
\begin{remark}[Rescaling]
    For numerical studies throughout the remainder of the paper, we set the computational domain to be a unit square $\Omega = [0,1]^2$ for convenience. By rescaling the spatial scale $\bm{x} \to L\bm{x}$ and the time scale $t \to L^2 t$, from \eqref{eq:model} we obtain the rescaled model for volatile inactive liquid films,
$
    \frac{\partial h}{\partial t} = \nabla\cdot\left(V_1(h)\nabla P\right) - V_2(h) P$. Here 
    the rescaled dynamic pressure is given in \eqref{eq:pressure_condense}
% \begin{equation}
% \label{eq-r}
%     \frac{\partial h}{\partial t} = \nabla\cdot\left(V_1(h)\nabla P\right) - V_2(h) P,\quad P = \Pi(h) - \pstar - \alpha^2 \nabla^2 h,
% \end{equation}
with the constant $\alpha = 1/L$, where $L$ is the domain length before rescaling. 
For the model with the activity field
% and temperature field 
\eqref{generic_model}, to emphasize the importance of the control variables, we introduce the rescaling $t \to \beta L^2 t$, $\bm x \to L\bm x$, $\gamma \to \gamma/L^2$,  where $\beta = 1/\eta$. This rescaling leads to the following scaled model:
\begin{subequations}
\label{eq-c}
\begin{equation}
    \frac{\partial h}{\partial t} + \nabla \cdot (V_1(h)\bm{v}_1) - V_2(h) v_2 = \beta\left[\nabla\cdot \left( V_1(h)\nabla P\right) - V_2(h)P\right], 
    % \quad P = \Pi(h) - \pstar - \alpha^2 \nabla^2 h,
\label{scaled_generic_model}
\end{equation}
where the vector field $\bm v_1$ and the source term $v_2$ are defined as
% \begin{equation}
% \label{v1v2}
%  \bm v_1 = \nabla(\zeta(t,\bm x) h),\quad v_2 = \zeta(t,\bm x) h + \Theta(t) - \pstar.   
% \end{equation}
\begin{equation}
\label{v1v2}
 \bm v_1 = \nabla(\zeta(t,\bm x) h),\quad v_2 = \zeta(t,\bm x) h.   
\end{equation}
% and the mobility functions $V_1(h)$ and $V_2(h)$ are given in \eqref{mob}.
\end{subequations}
\end{remark}
We refer to the system \eqref{eq-c} as the active control form of the thin-film equation.
In the next section, we introduce a mean field control model, which selects the active fields 
$\bm v_1$ and $v_2$ (hence the active field $\zeta$) in an optimal manner in certain metrics.
}

\section{Mean field control of droplet dynamics}\label{sec2}
This section presents the main formulation of mean field control (MFC) problems for the thin-film equation \eqref{eq:model}. 
We follow our previous work on MFC for (second-order) reaction-diffusion systems \citep{fu2023generalized}. 
A byproduct of our MFC formulation is a new 
Jordan–Kinderlehrer–Otto (JKO) scheme for the PDE \eqref{eq:model}, which is similar to the variational time implicit scheme discussed in \cite{FuOsherLi}; see Remark \ref{jko} below. 

\blue{We note that while the general form of the proposed MFC problems has a similar structure to MFC for reaction-diffusion systems considered in our earlier work \citep{fu2023generalized},
two new challenges emerge for MFC of  \eqref{eq:model}. First, the energy functional \eqref{Energy} contains the gradient of the surface height, $\nabla h$, making \eqref{gradientFlow} a fourth-order PDE. Second, both mobility functions $V_1(h)$  and $V_2(h)$ are {\it convex} functions of $h$ (see \eqref{mob}), which make the MFC problem a {\it nonconvex} optimization problem \blue{(See Remark \ref{rk:convexity} below)}. These new features make the numerical discretization of the MFC for droplet dynamics significantly more challenging than the one for the second-order reaction-diffusion case.
}
% (see Remark X below).
% {\color{red} [Fix remark number]}

The current work mainly focuses on the MFC formulation of droplet dynamics and its associated high-order finite element discretization. We will address the first challenge and show how the MFC framework for reaction-diffusion systems developed in \cite{fu2023generalized} can be naturally adopted here using additional {\it auxiliary variables}. A corresponding high-order space-time finite element discretization and its solution procedure using the primal-dual hybrid gradient (PDHG) method will be presented in Section \ref{sec3}. We leave theoretical investigations on the (non)convexity issue of the proposed mean field control problem for future work. 

 \subsection{Droplet dynamics induced distances and MFCs}
 The energy dissipation law \eqref{e-law} and its associated Fisher information functional \eqref{fisher} naturally induce a metric distance between two positive surface heights $h_0$ and $h_1$, as we define in the following. 
\setcounter{theorem}{0}
\begin{definition}[Distance functional]
\label{Dis}
% \noindent\textbf{Definition}: {\em Scalar distance functional.} 
Define a distance functional $\mathrm{Dist}_{V_1,V_2}\colon \mathcal{M}\times \mathcal{M}\rightarrow\mathbb{R}_+$ as below, where 
the space $\mathcal{M} = \{h\in L^1(\Omega): h\geq 0\}$. Consider the following optimal control problem: 
\begin{subequations}\label{DisZ} 
\begin{equation}\label{Dis1}
\mathrm{Dist}_{V_1, V_2}(h_0, h_1)^2:=\inf_{h, \bm v_1, v_2}\quad\int_0^1\int_\Omega \left(|\bm v_1|^2 V_1(h)+ |v_2|^2V_2(h)\right)dxdt, 
\end{equation}
where the infimum is taken among $h(t,x)\colon [0,1]\times\Omega\rightarrow\mathbb{R}_+$,  $\bm v_1(t,x)\colon  {[0,1]}\times \Omega \rightarrow\mathbb{R}^d$, 
$v_2(t,x)\colon  [0,1]\times \Omega \rightarrow\mathbb{R}$, 
such that $h$ satisfies a reaction-diffusion type equation with drift vector field $\bm v_1$, drift mobility $V_1$, reaction rate $v_2$, reaction mobility $V_2$, connecting initial and terminal surface heights $h_0$, ${h_1}\in\mathcal{M}$: 
\begin{equation}\label{Dis2}
\left\{\begin{aligned}
&\partial_t h + \nabla\cdot( V_1(h) \bm v_1)=V_2(h)v_2,\quad (t,x)\in {[0,1]}\times \Omega,\\
&h(0, x)=h_0(x),\quad {h(1,x)=h_1(x)},
\end{aligned}\right.
\end{equation}
with no-flux boundary condition $V_1(h)\bm v_1\cdot\nnu|_{\partial\Omega}=0$.
\end{subequations}
\end{definition}
\blue{\begin{remark}[On convexity]
\label{rk:convexity}
We illustrate here that the optimization problem in Definition \ref{Dis} is a  nonconvex optimization problem with a linear constraint. To do so, we introduce 
variables $\bmm = V_1(h)\bmv_1$ and
$s= V_2(h)v_2$, which make the constraint \eqref{Dis2} a linear equation: $\partial_t h + \nabla\cdot\bmm -s = 0$.
Then the objective functional in \eqref{Dis1}
is \[
\mathcal{L}(h, \bmm, s) = 
\int_0^1\int_\Omega \left(\frac{|\bmm|^2}{V_1(h)}+ \frac{|s|^2}{V_2(h)}\right)dxdt.
\]
From the convexity of mobility functions $V_1(h)$ and $V_2(h)$, we can show that both terms $\frac{|\bmm|^2}{V_1(h)}=\sum_{i=1}^d\frac{|m_i|^2}{V_1(h)}$
and 
$\frac{s^2}{V_1(h)}$ fail to be convex in the respective variables. Here the vector $\bmm = (m_1,\cdots, m_d)$. In fact, the determinant of the Hessian matrix for the term $\frac{|m_1|^2}{V_1(h)}$ for the variables $(h,m_1)$ is 
\[
\det \begin{bmatrix}
\left(    2\frac{V_1'(h)^2}{V_1(h)^{3}}-
    \frac{V_1''(h)}{V_1(h)^{2}}\right)|m_1|^2&
    -2\frac{V_1(h)'}{V_1(h)^2}m_1 &\\
    -2\frac{V_1(h)'}{V_1(h)^2}m_1 & 
    2V_1^{-1}
\end{bmatrix}
= -2\frac{V_1''(h)}{V_1(h)^{3}}|m_1|^2\le 0,
\]
by positivity and convexity of $V_1(h)$. 
Hence, the term $\frac{|m_1|^2}{V_1(h)}$  is not convex.
\end{remark}
}
Using the above-defined distance functional and the thin-film equation \eqref{eq:model}, we define the following
mean field control (MFC) problem for droplet dynamics.
\begin{definition}[MFC for droplet dynamics]
\label{mfc}
Given a time domain $[0,T]$, $T>0$, a potential functional $\mathcal{F}\colon \mathcal{M}\rightarrow\mathbb{R}$, 
and a terminal functional $\mathcal{G}\colon \mathcal{M}\rightarrow\mathbb{R}$, 
\begin{subequations}\label{mfcA}
consider 
\begin{equation}\label{mfcA1}
\begin{split}
&\inf_{h, \bm v_1, v_2}\int_0^T\!\!\left[\int_\Omega 
\frac{1}{2}\left(|\bm v_1|^2 V_1(h)+ |v_2|^2V_2(h)\right)dx - \mathcal{F}(h)\right]\!dt+\mathcal{G}(h(T,\cdot)), 
\end{split}
\end{equation}
where the infimum is taken among $h(t,x)\colon [0,T]\times\Omega\rightarrow\mathbb{R}_+$,  $\bm v_1(t,x)\colon  [0,T]\times \Omega \rightarrow\mathbb{R}^d$, 
$v_2(t,x)\colon  [0,T]\times \Omega \rightarrow\mathbb{R}$,  such that
\begin{equation}\label{mfcA2}
\partial_t h  +\nabla\cdot( V_1(h) \bm v_1)-V_2(h)v_2=\beta
\left[\nabla\cdot(V_1(h)\nabla P(h))-V_2(h) P(h)\right],  
\end{equation}
with  boundary condition 
\begin{align}
\label{mfcA3}
\left.V_1(h) (\bm v_1-\beta\nabla P(h))\cdot\nnu\right|_{\partial\Omega} =0,
\end{align}
and  initial surface height $h(0,\cdot) = h_0$ in $\Omega$. 
Here $\beta\ge0$ is a non-negative number, which represents the strength of 
the droplet dynamics \eqref{eq:model} in the constraint of mean field control problem \eqref{mfcA}.
\end{subequations}
\end{definition}

\begin{remark}[JKO temporal discretization to \eqref{eq:model}]
\label{jko}
In the above definition, if we take $T=1$, $\mathcal{F}=0$ and $\mathcal{G}(h) = \Delta t \mathcal{E}(h)$ as in \eqref{Energy}, 
and set parameter $\beta = 0$, we obtain 
a dynamic formulation of the celebrated JKO temporal discretization scheme \cite{JKO} for the gradient flow \eqref{eq:model}, which is a first-order variational time-implicit discretization with stepsize $\Delta t>0$. See \cite{FuOsherLi,LiWang22,LiLuWang20} for a related discussion on JKO-type discretizations for gradient flows in Wasserstein-type metric spaces.
\end{remark}

\subsection{MFC reformulations}
In this subsection, we focus on reformulations of the MFC problem in Definition \ref{mfc} which will  be suitable for a finite element discretization.

The first reformulation converts the constraint PDE \eqref{mfcA2} to a linear constraint by a change of variables.  
Specifically, introducing the flux function $\bmm(t,x)\colon [0,T]\times\Omega\rightarrow\mathbb{R}^d$ and source function $s(t, x)\colon [0,T]\times\Omega\rightarrow\mathbb{R}$, such that 
\begin{equation}
\label{change}
\bmm=V_1(h)\Big[\bm v_1 -\beta \nabla P(h)\Big], \quad s=V_2(h)\Big[v_2-\beta P(h)\Big], 
\end{equation}
then the MFC problem \blue{in Definition \ref{mfc}}
is equivalent to the following linearly constrained optimization problem:
% \begin{definition}[MFC reformation I]
% \label{mfcR1}
Given a potential functional $\mathcal{F}\colon \mathcal{M}\rightarrow\mathbb{R}$, 
and a terminal functional $\mathcal{G}\colon \mathcal{M}\rightarrow\mathbb{R}$, 
consider 
\begin{equation}\label{mfc1}
\begin{split}
\inf_{h, \bmm, s}&\int_0^T\!\int_\Omega 
\tfrac12|\tfrac{\bmm}{V_1(h)}+\beta\nabla P(h)|^2 V_1(h)dxdt \\
&
\!\!\!\!\!\!+\int_0^T\!\int_\Omega 
\tfrac12|\tfrac{s}{V_2(h)}+\beta P(h)|^2V_2(h)dxdt
-\int_0^T\mathcal{F}(h)dt+\mathcal{G}(h(T,\cdot)), 
\end{split}
\end{equation}
where the infimum is taken among functions $h, \bmm, s$,  such that
\begin{equation}\label{mfcR1}
\partial_t h  +\nabla\cdot \bmm-s=0 \;\text{ on }
[0,T]\times \Omega, \; 
\left.\bmm\cdot\nnu\right|_{[0,T]\times\partial\Omega} =0, \;
h(0,x) = h_0(x). 
\end{equation}
Expanding the product terms in \eqref{mfc1}, we get
\begin{equation}
% \label{mfcA1R2}
\begin{split}
\inf_{h, \bmm, s}&\int_0^T\!\int_\Omega 
\left(
\frac{|\bmm|^2}{2V_1(h)}
+\frac{|s|^2}{2V_2(h)}
\right)+
\beta\left(\bmm\cdot\nabla P(h)+s\cdot P(h)
\right)
dxdt \\
&
+\int_0^T\!\int_\Omega 
\frac{\beta^2}2
\left(|\nabla P(h)|^2V_1(h)
+|P(h)|^2 V_2(h)
\right)
dxdt \\
&
-\int_0^T\mathcal{F}(h)dt+\mathcal{G}(h(T,\cdot)). 
\end{split}
\end{equation}
Using integration by parts and the constraint \eqref{mfcR1}, we have 
\begin{align}
   & \int_0^T\!\int_\Omega 
    \left(\bmm\cdot\nabla P(h)+s\cdot P(h)
\right)
dxdt \nonumber 
\\ = & \;
 \int_0^T\!\int_\Omega 
  P(h)  \left(-\nabla\cdot\bmm+s
\right)
dxdt \nonumber 
\\
=& \;
 \int_0^T\!\int_\Omega 
  P(h)  \partial_t h\;
dxdt= \mathcal{E}(h(T,\cdot))-
  \mathcal{E}(h_0),
\end{align}
where we used the definition of dynamic pressure $P(h)=\tfrac{\delta \mathcal{E}(h)}{\delta h}$ in the last step.
Combining these derivations and noticing that $h_0$ is given, we arrive at  
the following equivalent formulation of the MFC problem \blue{in Definition \ref{mfc}}.
\begin{definition}[MFC \blue{reformulation} I]
\label{mfcR}
Consider 
\begin{equation}\label{mfcR2}
\begin{split}
\inf_{h, \bmm, s}&\quad \int_0^T\int_\Omega 
\left(
\frac{|\bmm|^2}{2V_1(h)}
+\frac{|s|^2}{2V_2(h)}
\right)dxdt\\
&
+\int_0^T\!\int_\Omega 
\frac{\beta^2}2
\left(|\nabla P(h)|^2V_1(h)
+|P(h)|^2 V_2(h)
\right)
dxdt \\
&
-\int_0^T\mathcal{F}(h)dt+\mathcal{G}(h(T,\cdot))
+\beta \mathcal{E}(h(T,\cdot)), 
\end{split}
\end{equation}
where the infimum is taken among $h, \bmm, s$
satisfying \eqref{mfcR1}.
\end{definition}

\begin{proposition}[MFC systems of droplet dynamics]\label{MFCD}
Let $(h, \bmm, s)$ be the critical point system of the MFC problem \eqref{mfcR2}. Then there exists a function $\phi\colon [0, T]\times\Omega\rightarrow \mathbb{R}$, such that 
\begin{equation}
\label{opt-m}
\frac{\bmm(t,x)}{V_1(h(t,x))}=\nabla \phi(t,x),\quad \frac{s(t,x)}{V_2(h(t,x))}=\phi(t,x),
\end{equation}
and 
\begin{equation}\label{SBPI}
\left\{\begin{aligned}
&\partial_th(t,x) +\nabla\cdot(V_1(h(t,x))\nabla \phi(t,x))-V_2(h(t,x))\phi(t,x)=0,\\
&\partial_t\phi(t,x)+\frac{1}{2}\|\nabla \phi(t,x)\|^2 V_1'(h(t,x))+\frac{1}{2}|\phi(t,x)|^2V'_2(h(t,x))\\
&\hspace{5.8cm}+\frac{\delta}{\delta h}\Big[\mathcal{F}(h)-\frac{\beta^2}{2}\mathcal{I}(h)\Big](t,x)=0,
\end{aligned}\right.
\end{equation}
where 
$\mathcal{I}(h)$ is the generalized Fisher information functional given in \eqref{fisher}, such that 
\begin{equation}
\begin{split}
\frac{\delta}{\delta h}\mathcal{I}(h)=&\quad \frac{1}{2}(V_1'(h)|\nabla P(h)|^2+V_2'(h)|P(h)|^2)-\nabla\cdot(V_1(h)\nabla P(h))\Pi'(h)\\    
&+\alpha^2\Delta \nabla\cdot(V_1(h)\nabla P(h))+\Pi'(h)V_2(h)P(h)-\alpha^2\Delta (P(h)V_2(h)), 
\end{split}
\end{equation}
with initial and terminal time conditions 
%{\color{red}[boundary conditions?]}
\begin{equation}
   h(0,x)=h_0(x),\quad \phi(T,x)=-\frac{\delta}{\delta h}\Big(\mathcal{G}(h(T,\cdot))+\beta \mathcal{E}(h(T,\cdot))\Big).  
\end{equation}
%%In addition, 
\end{proposition}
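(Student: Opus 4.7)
The plan is to derive \eqref{opt-m}--\eqref{SBPI} as the first-order optimality (KKT) conditions for the constrained minimization in Definition \ref{mfcR}. I would adjoin the linear continuity constraint \eqref{mfcR1} to the objective \eqref{mfcR2} through a Lagrange multiplier $\phi(t,x)$, forming
\[
\mathcal{L}(h,\bmm,s,\phi) = J(h,\bmm,s) + \int_0^T\!\!\int_\Omega \phi\,\bigl(\partial_t h + \nabla\cdot\bmm - s\bigr)\,dx\,dt,
\]
where $J$ denotes the objective in \eqref{mfcR2}, and then set the first variations with respect to $\bmm$, $s$, and $h$ to zero.

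\textbf{Step 1} (variations in $\bmm$ and $s$). Integrating $\phi\,\nabla\cdot\delta\bmm$ by parts in space and using that admissible $\delta\bmm$ satisfies $\delta\bmm\cdot\nnu|_{\partial\Omega}=0$, the pointwise Euler--Lagrange equations give $\bmm/V_1(h)=\nabla\phi$ and $s/V_2(h)=\phi$, which is precisely \eqref{opt-m}. Substituting these back into \eqref{mfcR1} yields the forward equation in \eqref{SBPI}.

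\textbf{Step 2} (variation in $h$). After Step 1, the kinetic terms contribute $-\tfrac12|\nabla\phi|^2 V_1'(h)-\tfrac12|\phi|^2 V_2'(h)$; the potential $-\mathcal{F}(h)$ and the Fisher term $\tfrac{\beta^2}{2}\mathcal{I}(h)$ contribute $-\delta\mathcal{F}/\delta h + \tfrac{\beta^2}{2}\,\delta\mathcal{I}/\delta h$; and integrating $\phi\,\partial_t\delta h$ by parts in time produces $-\partial_t\phi$ in the interior plus a boundary term $\int_\Omega \phi(T,x)\delta h(T,x)\,dx$ (the $t=0$ term vanishes since $h(0,\cdot)=h_0$ is fixed). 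Setting the interior coefficient of $\delta h$ to zero and multiplying by $-1$ produces the backward Hamilton--Jacobi--Bellman-type equation in \eqref{SBPI}, while balancing the $t=T$ boundary term against the variations of $\mathcal{G}(h(T,\cdot))$ and $\beta\mathcal{E}(h(T,\cdot))$ yields the terminal condition $\phi(T,x)=-\delta(\mathcal{G}+\beta\mathcal{E})/\delta h$.

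\textbf{Step 3} (formula for $\delta\mathcal{I}/\delta h$). With $\mathcal{I}(h)=\int_\Omega\bigl(V_1(h)|\nabla P|^2+V_2(h)P^2\bigr)dx$ and $P=\Pi(h)-\pstar-\alpha^2\nabla^2 h$, the variation splits into a direct piece in $V_1,V_2$ giving $V_1'|\nabla P|^2+V_2' P^2$, and a chain-rule piece via $\delta P=\Pi'(h)\delta h-\alpha^2\nabla^2\delta h$ inserted into $2V_1\nabla P\cdot\nabla\delta P+2V_2 P\,\delta P$. Peeling $\delta h$ off by successive integrations by parts, using the Neumann condition \eqref{mfcA3} on $V_1\nabla P$ and the natural condition $\nabla h\cdot\nnu=0$ implicit in $P$ to discard all surface terms, produces the remaining four terms $-\nabla\cdot(V_1\nabla P)\Pi'(h)+\alpha^2\Delta\nabla\cdot(V_1\nabla P)+\Pi'(h)V_2 P-\alpha^2\Delta(V_2 P)$ displayed in the proposition.

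The main obstacle is the fourth-order bookkeeping in Step 3: $\mathcal{I}$ depends on $h$ both through $V_1,V_2$ and through $P$, and $P$ itself carries a Laplacian, so $\nabla\delta P$ already involves $\nabla\nabla^2\delta h$. One therefore has to perform up to four integrations by parts and rely on the compatibility between \eqref{mfcA3} and the natural Neumann condition on $h$ to ensure that every surface term drops. Beyond this technicality, the derivation is the standard saddle-point calculus on the Benamou--Brenier-type formulation \eqref{mfcR2}, and the forward/backward coupled structure of \eqref{SBPI}, with kinetic, potential, and Fisher-information-corrected source, is the expected shape for a mean field control problem on a Wasserstein-type space.
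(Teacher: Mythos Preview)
Your proposal is correct and follows essentially the same route as the paper: form the Lagrangian by adjoining the linear constraint \eqref{mfcR1} with a multiplier $\phi$, take first variations in $\bmm$, $s$, $h$ (and $h_T$) to obtain \eqref{opt-m}, the forward--backward system \eqref{SBPI}, and the terminal condition, and then compute $\delta\mathcal{I}/\delta h$ by expanding $\mathcal{I}(h+\epsilon\,\delta h)$ through $\delta P=\Pi'(h)\delta h-\alpha^2\Delta\delta h$ and integrating by parts. The only cosmetic difference is that the paper treats $h_T$ as a separate variable in the Lagrangian and reads off the terminal condition from $\delta\mathcal{L}/\delta h_T=0$, whereas you obtain it from the $t=T$ boundary term of the time integration by parts; these are equivalent.
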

We remark that a similar MFC formulation and system for reaction-diffusion equation was considered in our earlier work \citep{FuOsherLi}. We present the derivation of the MFC system \eqref{SBPI} in Appendix \ref{appA}.

MFC problems and systems are generalizations of Benamou-Brenier formulas in optimal transport \citep{villani2008optimal}. This refers to setting $\beta=0$, $V_1(h)=h$, and $V_2(h)=0$. 
\blue{In the context of mean field control of droplet dynamics, we need to address additional challenges, in which the dynamic pressure $P(h)$ given in \eqref{eq:pressure_condense} involves a second-order Laplacian term. The role of this Laplacian term is well known in lubrication models, and its treatment in both forward simulations \citep{witelski2003adi} and (boundary) optimal control
contexts \citep{BISWAL2024133942} is well understood.
\red{In addition, the treatment of the Laplacian term in controlling free interfaces has been discussed in the context of the KS equation \citep{tomlin2019point} and weighted residual integral boundary layer models \citep{wray2022electrostatic}.}
However, the Laplacian term brings additional difficulties in the computation of the proposed MFC problem using finite element methods. This is from the fact that we need to approximate the forward-backward mean field control system \eqref{SBPI}, in which the Laplacian term is enforced in the formulation of generalized Fisher information functional \eqref{fisher}.} We also comment that the dynamic pressure $P(h)$ is essential in modeling the disjoining pressure and surface tension that govern the droplet dynamics.  In numerical experiments, we demonstrate that the MFC problem with this pressure term exhibits essential patterns of droplets, including droplet spreading, transport, merging, and splitting.

We introduce additional auxiliary variables 
to further reformulate the MFC problem \eqref{mfcR}.
Let $\bmn(t,x): [0,T]\times \Omega\rightarrow \mathbb{R}^d$, 
$p(t,x): [0,T]\times \Omega\rightarrow \mathbb{R}$, 
and 
$\bmq(t,x): [0,T]\times \Omega\rightarrow \mathbb{R}^d$ be defined as follows:
\begin{align}
\label{aux}
\bmn = \alpha\nabla h, \;\;
p = -\alpha\nabla\cdot \bmn, \;\;
\bmq = \alpha \nabla p.
\end{align}
This implies $p = -\alpha\nabla\cdot(\alpha\nabla h) = -\alpha^2\nabla^2h$. \blue{Hence, the dynamic pressure 
$P(h)$ and its gradient $\nabla P(h)$ can be expressed as follows:
\begin{align}
    \label{dPh}
    P(h) = U'(h)+p, \quad 
    \nabla P(h) = U''(h)\nabla h + \nabla p=
\frac1{\alpha}(U''(h)\bmn+\bmq).
\end{align}
}
% satisfies 
% $P(h) = U'(h) + p$, and 
% its gradient follows  
% \begin{equation}
% \label{dPh}
% \nabla P(h) = U''(h)\nabla h + \nabla p=
% \frac1{\alpha}(U''(h)\bmn+\bmq).
% \end{equation}
Plugging these relations back to the MFC problem \eqref{mfcR2}, we obtain the following equivalent reformulation.
\begin{definition}[MFC reformulation I\!I]
\label{mfcZ}
Consider 
\begin{equation}\label{mfcZ2}
\begin{split}
\inf_{h, \bmm, s, \bmn, p, \bmq}&\int_0^T\!\int_\Omega 
\left(
\frac{|\bmm|^2}{2V_1(h)}
+\frac{|s|^2}{2V_2(h)}
\right)dxdt\\
&
\!\!\!\!\!\!\!\!\!\!\!\!\!\!\!\!+\int_0^T\!\int_\Omega 
\frac{\beta^2}{2}
\left(\tfrac{|U''(h)\bmn+\bmq|^2}{\alpha^2}V_1(h)
+|U'(h)+p|^2 V_2(h)
\right)
dxdt \\
&
\!\!\!\!\!\!\!\!\!\!\!\!\!\!\!\!-\int_0^T\mathcal{F}(h)dt+\mathcal{G}(h(T,\cdot))
+\beta 
\int_{\Omega}\left(U(h(T,x)) + \frac{|\bmn(T,x)|^2}{2}\right)dx
, 
\end{split}
\end{equation}
where the infimum is taken among $h, \bmm, s$
satisfying \eqref{mfcR1}, and $\bmn, p, \bmq$ satisfying 
\eqref{aux}.
\end{definition}
To simplify the notation, we collect the variables 
into a 
% big 
vector
\begin{align}
\label{uu}
    \bmu := (h, \bmm, s, \bmn, p, \bmq), 
\end{align}
and introduce $\bmu_T:= (h_T, \bmn_T)$
where $h_T:\Omega\rightarrow \mathbb{R}_+$ is the terminal surface height, and
$\bmn_T:\Omega\rightarrow \mathbb{R}^d$
 the scaled surface height gradient at terminal time.
Hence $\bmu(t,x):[0,T]\times \Omega\rightarrow\mathbb{R}^{3d+3}$ is a space-time function with $3d+3$ components, and 
$\bmu_T(x):\Omega\rightarrow\mathbb{R}^{d+1}$ is a spatial function with 
$d+1$ components.
 We further denote the functionals $H(\bmu)$ 
 and $H_T(\bmu_T)$, such that 
 \begin{subequations}
\label{Hu}     
 \begin{align}
\label{Hu1}
H(\bmu) := &\;
\frac{|\bmm|^2}{2V_1(h)}
+\frac{|s|^2}{2V_2(h)}
+\frac{\beta^2}{2\alpha^2}
|U''(h)\bmn+\bmq|^2V_1(h)
\nonumber \\
&\;+\frac{\beta^2}{2}|U'(h)+p|^2 V_2(h) - F(h),\\
\label{Hu2}
H_T(\bmu_T):= &\;
\beta\left(U(h_T)
+\frac{|n_T|^2}{2}\right)
+G(h_T),
 \end{align}
 \end{subequations}
where $F(h)$ and $G(h)$ are density functions for functionals $\mathcal{F}$ and $\mathcal{G}$, i.e., 
\begin{equation}
\mathcal{F}(h) = \int_\Omega F(h)dx, \quad
\mathcal{G}(h) = \int_\Omega G(h)dx.   
\end{equation}
Using these notations, the MFC problem \ref{mfcZ}
takes the following compact form:
\begin{definition}[MFC problem: compact form]
\label{mfcC}
Consider 
\begin{subequations}
\label{mfcC0}
\begin{equation}\label{mfcC1}
\begin{split}
\inf_{\bmu, \bmu_T}&\int_0^T\!\int_\Omega H(\bmu)
dxdt +\int_{\Omega}H_T(\bmu_T)dx, 
\end{split}
\end{equation}
subject to the constraints on the space-time domain
\begin{align}
    \label{mfcC2}
    \begin{split}
    \partial_t h + \nabla\cdot\bmm - s = &\;0,\\
    -\bmn + \alpha \nabla h = &\;0,\\
    p + \alpha\nabla\cdot\bmn = &\;0,\\
    \bmq - \alpha\nabla p = &\;0,
\end{split}
\quad\quad\quad\text{ on } [0,T]\times\Omega,
\end{align}
and the constraints at terminal time
\begin{align}
    \label{mfcC3}
    \begin{split}
    h(0,\cdot) = h_0,\\
    h(T,\cdot) = h_T,\\
    -\bmn_T + \alpha \nabla h_T = &\;0,\\
\end{split}
\quad\quad\quad\text{ on } \Omega,
\end{align}
with Neumann boundary condition 
\begin{align}
    \label{mfcC4}
    \bmm\cdot\nnu = 0 \quad \text{ on }[0,T]\times\partial\Omega.
\end{align}
\end{subequations}
\end{definition}
\begin{remark}
We note that all the MFC formulations above are mathematically equivalent. Our numerical discretization, however, will be constructed based on the last formulation in Definition \ref{mfcZ} or Definition \ref{mfcC}. It has a form that the constraints are linear PDEs, and the objective function does not involve spatial derivatives. These two properties are crucial for the efficient implementation of the finite element scheme that we will develop in Section \ref{sec3}.   
\end{remark}

\blue{
\begin{remark}[On recovering the physics control variable $\zeta$]
\label{rk:physics}
Combining the definition in \eqref{change} with the optimality conditions \eqref{opt-m} in Proposition \ref{MFCD}, we get 
    \[
    \bm v_1 - \beta \nabla P(h) = \nabla \phi, \quad 
    v_2-\beta P(h) = \phi.
    \]
    In particular, this implies that $\bm v_1$ is a gradient field, and 
    the relation $\bm v_1 = \nabla v_2$ holds.
Using the equations \eqref{v1v2} in Section \ref{sec:model} that
relate the control variables $\bm v_1$ and $v_2$ 
to the activity field $\zeta$,
we get  
\begin{align}
\label{zeta}
    \zeta = \frac{\beta P(h)+\phi}{h} = \frac{\beta(U'(h)+p)+\phi}{h},
\end{align}
where we used the relation \eqref{dPh} in the last equality.
\end{remark}
}

\subsection{Saddle-point problem}
Finally, we reformulate the constrained optimization problem in Definition \ref{mfcC} into a saddle-point problem using Lagrange multipliers, for which a finite element discretization will be developed in Section \ref{sec3}. 
We introduce the following four Lagrange multipliers on the space-time domain $[0,T]\times \Omega$ 
for the four equations in \eqref{mfcC2}. They are
scalar functions
$\phi(t,x):[0,T]\times\Omega\rightarrow\mathbb{R}$, $\xi(t,x):[0,T]\times\Omega\rightarrow\mathbb{R}$, 
vectorial functions 
$\bm{\sigma}(t,x):[0,T]\times\Omega\rightarrow\mathbb{R}^d$, $\bm{\theta}(t,x):[0,T]\times\Omega\rightarrow\mathbb{R}^d$, 
and a Lagrange multiplier $\bm{\sigma}_T(x):\Omega\rightarrow\mathbb{R}^d$
on the spatial domain (at terminal time):
\begin{alignat*}{2}
    \partial_t h + \nabla\cdot\bmm - s = &\;0 &&\;\;\longleftrightarrow \phi,\\
    -\bmn + \alpha \nabla h = &\;0&&\;\;\longleftrightarrow \bm{\sigma},\\
    p + \alpha\nabla\cdot\bmn = &\;0&&\;\;\longleftrightarrow \xi,\\
    \bmq - \alpha\nabla p = &\;0&&\;\;\longleftrightarrow \bm{\theta},\\
        -\bmn_T + \alpha \nabla h_T = &\;0&&\;\;\longleftrightarrow \bm{\sigma}_T.
\end{alignat*}
Then the MFC problem \ref{mfcC} can be formulated as the following saddle-point problem: 
\begin{equation}
\begin{split}
\inf_{\bmu, \bmu_T}
\sup_{\bPphi, \bm{\sigma}_T}&\quad\int_0^T\!\int_\Omega H(\bmu)
dxdt +\int_{\Omega}H_T(\bmu_T)dx\\ 
&+\int_0^T\!\int_\Omega \Big[(\partial_t h + \nabla\cdot\bmm - s)\phi 
+(-\bmn + \alpha\nabla h)\cdot\bm{\sigma}\Big]dxdt\\
&+\int_0^T\!\int_\Omega \Big[(p + \alpha\nabla\cdot\bmn)\xi+
(\bmq-\alpha\nabla p)\cdot\bm{\theta}
\Big]
dxdt\\
&+\int_\Omega (-\bmn_T + \alpha\nabla h_T)\cdot\bm{\sigma}_T
dx,
\end{split}
\end{equation}
with the following boundary and initial/terminal conditions
\begin{equation}
h(0,\cdot) = h_0, \;\; h(T,\cdot) = h_T \quad \text{ in }\Omega, \quad\text{ and } \bmm\cdot\nnu = 0 \text{ on }[0,T]\times\partial\Omega.
\end{equation}
Here $\bPphi = (\phi, \xi, \bm{\sigma}, \bm{\theta})$.

Next, applying integration by parts on the above saddle-point problem to move all derivatives of $\bm u$ to the dual variables $\bPphi$,
% Lagrange multipliers $\phi$, $\bm{\sigma}$, 
and using the initial and boundary conditions, we obtain 
\begin{equation}
\label{saddle}
\begin{split}
\inf_{\bmu, \bmu_T}
\sup_{\bPphi, \bm{\sigma}_T}&\int_0^T\!\int_\Omega H(\bmu)
dxdt +\int_{\Omega}H_T(\bmu_T)dx\\ 
&\!\!\!\!\!\!\!\!\!\!\!\!+\int_0^T\!\int_\Omega \Big[-( h \partial_t\phi+ \bmm\cdot\nabla\phi + s\phi) 
-(\bmn\cdot\bm{\sigma} + \alpha h\nabla\cdot\bm{\sigma})\Big]dxdt\\
&\!\!\!\!\!\!\!\!\!\!\!\!+\int_0^T\!\int_\Omega \Big[(p \xi- \alpha\bmn\cdot\nabla \xi)+
(\bmq\cdot\bm{\theta}+\alpha p\nabla \cdot\bm{\theta})
\Big]
dxdt\\
&\!\!\!\!\!\!\!\!\!\!\!\!+\int_\Omega\Big[-(\bmn_T \cdot\bm{\sigma}_T+ \alpha h_T \nabla\cdot\bm{\sigma}_T)
+h_T\phi(T, x)-h_0\phi(0, x)\Big]
dx.
\end{split}
\end{equation}
In the above formulation, we assume the Lagrange multipliers $\bm{\sigma}$, $\xi$, and 
$\bm{\theta}$ satisfy the following Neumann boundary conditions:
\begin{equation}
 \bm{\sigma}\cdot\nnu = 0, \;\;
   \nabla{\xi}\cdot\nnu = 0, \;\;
  \bm{\theta}\cdot\nnu = 0, 
  \text{ on }[0,T]\times\partial\Omega. 
\end{equation}

The saddle-point problem \eqref{saddle} is the final form of our MFC problem that will be discretized in the next section. The variational structure of this problem makes the finite element method an ideal candidate for such a problem.
We close this section with a discussion on the proper function spaces 
for the primal variables $\bmu$ and $\bmu_T$ and dual variables $\bPphi$
and $\bm{\sigma}_T$ in \eqref{saddle} which makes the integrals in \eqref{saddle} valid.
The spaces are given as follows
\begin{subequations}
    \label{space}
    \begin{alignat}{2}
        \bmu\in &\;\;\Big\{
        \bm{v}\in [L^2([0,T]\times \Omega)]^{3d+3}:\quad
        \int_0^T\!\!\!\!\int_{\Omega}H(\bm{v})dxdt <+\infty,\\
        \nonumber
        &\;\; \quad\quad\quad\quad\quad\quad\quad\quad\quad\text{first component of }\bm{v} \text{ is non-negative}
        \Big\}, \\
        \bmu_T\in &\;\;\Big\{
        \bm{v}_T\in [L^2(\Omega)]^{d+1}:\quad
        \int_{\Omega}H_T(\bm{v}_T)dx <+\infty,
        \\
        \nonumber
        &\;\; \quad\quad\quad\quad\quad\quad\quad\quad\quad\text{first component of }\bm{v}_T \text{ is non-negative}
        \Big\}, \\
        \bPphi\in &\;\;V_{\phi}\times \bm{V}_{\sigma}\times 
        {V}_{\xi}\times \bm{V}_{\theta}, \quad\quad
        \bm{\sigma}_T\in \;\;H_0(\mathrm{div}; \Omega),
    \end{alignat}
where 
\begin{align}
    V_{\phi}=& H^1([0,T]\times\Omega),\\ 
    \bm{V}_{\sigma}=\bm{V}_{\theta}=& L^2([0,T])\otimes H_0(\mathrm{div};\Omega), \\
    {V}_{\xi}=& L^2([0,T])\otimes H_0^1(\Omega).
\end{align}
\end{subequations}
Here we use the usual definition of Sobolev spaces
\begin{align}
    L^2(\Omega):=&\{v:\Omega\rightarrow \mathbb{R}: \;\;\int_{\Omega}|v|^2dx <+\infty\},\\
    H^1(\Omega):=&\{v\in L^2(\Omega): \nabla v \in [L^2(\Omega)]^d\},\\
    H(\mathrm{div}; \Omega):=&\{\bm{v}\in [L^2(\Omega)]^d: \nabla \cdot \bm{v} \in L^2(\Omega)\}.
\end{align}
Moreover, $H^1_0(\Omega)$ is the subspace of $H^1(\Omega)$ with a {\it zero} boundary condition, 
and $H_0(\mathrm{div}; \Omega)$ is the subspace of $H(\mathrm{div}; \Omega)$ with a {\it zero} boundary condition on the normal direction.

\section{High order discretizations and optimization algorithms}\label{sec3}
This section presents the high-order spatial-time finite element discretization and its associated primal-dual hybrid gradient (PDHG) 
optimization solver for the proposed MFC saddle-point problem \eqref{saddle}.
\subsection{The high-order finite element scheme}
We first partition the spatial domain $\Omega$ into a spatial mesh $\Omega_h=\{K_\ell\}_{\ell=1}^{N_S}$ with $N_S$ elements where each element $K_{\ell}$ is assumed to be a mapped hypercube in $\mathbb{R}^d$, 
and the temporal domain $[0,T]$ into a temporal mesh $I_h=\{I_j\}_{j=1}^{N_T}$ with $N_T$ segments. Denote the space-time mesh as 
$\Omega_{T,h}=I_h\otimes \Omega_h$.
The function spaces in \eqref{space} for the saddle-point problem \eqref{saddle} indicate natural discretization spaces for the primal and dual variables. 
In particular, we use the following {\it conforming} finite element spaces to discretize  the dual variables $\bPphi$ and $\bm{\sigma}_T$: 
\begin{subequations}
    \label{d-space}
    \begin{align}
        V_{\phi,h}^{k+1} =&\; \{\psi\in V_{\phi}: \;\; 
        v|_{I_j\times K_\ell}
        \in Q^{k+1}(I_j)\otimes Q^{k+1}(K_\ell)\;\;
\forall j, \ell\},\\
        \bm{V}_{\sigma,h}^k =&\; \{\bm{\tau}\in \bm{V}_{\sigma}: \;\; 
        \bm{\tau}|_{I_j\times K_\ell}
        \in Q^k(I_j)\otimes RT^k(K_\ell)\;\;
\forall j, \ell\},\\
        {V}_{\xi,h}^{k,k+1} = &\;\{q\in \bm{V}_{\xi}: \;\; 
        q|_{I_j\times K_\ell}
        \in Q^k(I_j)\otimes Q^{k+1}(K_\ell)\;\;
\forall j, \ell\},\\
        \bm{M}_{\sigma,h}^k = &\;\{\bm{\tau}\in H_0(\mathrm{div};\Omega): \;\; 
        \bm{\tau}|_{K_\ell}
        \in RT^k(K_\ell)\;\;\forall j, \ell\},
    \end{align}
    where $Q^k(K_\ell)$ is the tensor-produce polynomial space of degree no greater than $k$ in each direction, and 
    $RT^k(K_\ell)$ is the local Raviart-Thomas finite element space 
    \cite{Boffi13}
    on the mapped hypercube $K_\ell$, for $k\ge 0$.
    For the primal variables $\bmu$ and $\bmu_T$, it is natural to use 
   an integration rule space such that they are 
    defined {\it only } on the (high-order) numerical integration points, 
    since no derivative calculation is needed for these variables. 
    Let $X_{t,k}:=\{\chi_{t,i}\}_{i=1}^{N_{T,q}^k}$ be the quadrature points 
    and $\{\omega_{t,i}\}_{i=1}^{N_{T,q}^k}$ the corresponding quadrature weights on the temporal mesh $I_h$ using $(k+1)$ Gauss-Legendre (GL) 
    integration points per line segment, and denote 
    $X_{s,k}:=\{\chi_{s,j}\}_{j=1}^{N_{S,q}^k}$ be the quadrature points 
    and $\{\omega_{s,j}\}_{j=1}^{N_{S,q}^k}$ the corresponding quadrature weights on the spatial mesh $\Omega_h$ using $(k+1)$ Gauss-Legendre (GL)  integration points per coordinate direction in each element. 
    We approximate each component of $\bmu$ and $\bmu_T$ using the following space-time and spatial integration rule spaces, respectively:
    \begin{align}
        \label{u-space}
    W_h^k :=\Big\{v: X_{t,k}\times X_{s,k}\rightarrow \mathbb{R}\Big\},
    \quad 
        M_h^k :=\Big\{v: X_{s,k}\rightarrow \mathbb{R}\Big\}.
    \end{align}
    Note that a function in the quadrature space $W_h^k$
    can be interpreted as a vector of size $N_{T,q}^k\times N_{S,q}^k$, and 
    a function in $M_h^k$ can be interpreted as a vector of size $N_{S,q}^k$.     
\end{subequations}

Using the above finite element spaces, we define the following discrete saddle-point problem:
Find the critical point of the discrete system
\begin{equation}
\label{saddleH}
\begin{split}
\inf_{\bmu_h, \bmu_{T,h}}
\sup_{\bPphi_h, \bm{\sigma}_{T,h}}& 
\intst{H(\bmu_h)}
+
\ints{H_T(\bmu_{T,h})}
\\ 
&\!\!\!\!\!\!\!\!\!\!\!\!\!\!\!\!\!\!\!\!\!\!\!\!- \intst{h_h \partial_t\phi_h+ \bmm_h\cdot\nabla\phi_h + s_h\phi_h}-\intst{\bmn_h\cdot\bm{\sigma}_h + \alpha h_h\nabla\cdot\bm{\sigma}_h}\\
&\!\!\!\!\!\!\!\!\!\!\!\!\!\!\!\!\!\!\!\!\!\!\!\!+\intst{p_h \xi_h- \alpha\bmn_h\cdot\nabla \xi_h}+
\intst{\bmq_h\cdot\bm{\theta}_h+\alpha p_h\nabla \cdot\bm{\theta}_h}\\
&\!\!\!\!\!\!\!\!\!\!\!\!\!\!\!\!\!\!\!\!\!\!\!\!-\ints{\bmn_{T,h} \cdot\bm{\sigma}_{T,h}+ \alpha h_{T,h} \nabla\cdot\bm{\sigma}_{T,h}}
+\ints{h_{T,h}\phi_h(T, x)-h_0\phi_h(0, x)}
\end{split}
\end{equation}
where the variables
$\uu_h:=(h_h, \mm_h, \so_h, \nn_h, p_h, q_h)\in [W_h^k]^{3d+3}$
with $h_h\ge0$, $\uu_{T,h}=(h_{T,h}, \nn_{T,h})\in [M_h^{k}]^{d+1}$
with $h_{T,h}\ge 0$, 
$\Pphi_h:=(\pphi_h,\ssigma_h, \xi_h, \bm{\theta}_h)
\in V_{\phi,h}^{k+1}\times \bm{V}_{\sigma}^k
\times V_{\xi, h}^{k,k+1}\times 
\bm{V}_{\sigma}^k$, and $\ssigma_{T,h}\in \bm{M}_{\sigma,h}^k$.
Here the double bracket is the numerical integration on the space-time domain 
and single bracket is the numerical integration on the spatial domain 
defined as follows:
\begin{align}
    \label{int}
    \ints{f(x)} = \sum_{j=1}^{N_{s,j}^k}f(\chi_{s,j})\omega_{s,j}, 
    \quad 
        \intst{f(t, x)} = \sum_{i=1}^{N_{t,i}^k}\sum_{j=1}^{N_{s,j}^k}f(\chi_{t,i}, \chi_{s,j})\omega_{t,i}\omega_{s,j}. 
\end{align}
\blue{After solving for the discrete saddle-point problem \eqref{saddleH}, we can recover the 
dynamic pressure $P_h = U'(h_h)+p_h$, 
and the activity field $\zeta(t, \bm x) = \frac{\beta P_h + \phi_h}{h_h}$; see Remark \ref{rk:physics}. 
% $\bm{v}_1$ and $v_2$ using the formulas \eqref{change} and \eqref{dPh}:
% \begin{align}
%     \bm{v}_1 = \frac{\bmm_h}{V_1(h_h)}+\frac{\beta}{\alpha}(U''(h_h)\bmn_h+\bmq_h), \quad 
%     {v}_2 = \frac{s_h}{V_2(h_h)}+{\beta}(U'(h_h)+p_h).
% \end{align}
}

\subsection{A generalized PDHG algorithm}
We solve the discrete saddle point problem \eqref{saddleH}
using a generalized preconditioned PDHG algorithm, which is a splitting algorithm that solve for the primal variables $\bmu_h$ and $\bmu_{T,h}$, 
and each component of the dual variables $\Pphi_h$ and $\ssigma_{T,h}$
sequentially. The following algorithm is a generalization of the G-prox PDHG algorithm developed in \cite{jacobs2019solving}. 

\begin{algorithm}
\caption{Generalized PDHG  for \eqref{saddleH}.}
\label{alg:1}
\begin{algorithmic}[1]
\STATE Choose initial guesses $\bPphi_h^0, \ssigma_{T,h}^0,
\uu_h^0, \uu_{T,h}^0$, and parameters $\sigma_{\phi}, \sigma_{u}>0$.
\STATE \textbf{for} $\ell=0,1,\cdots$ \textbf{do}
\STATE \quad\quad Compute $\phi_h^{\ell+1}
\in V_{\phi,h}^{k+1}$, 
$\ssigma_h^{\ell+1}\in \bm{V}_{\sigma,h}^{k}$,
$\xi_h^{\ell+1}\in {V}_{\xi,h}^{k,k+1}$, 
$\bm{\theta}_h^{\ell+1}\in \bm{V}_{\sigma,h}^{k}$, and 
 $\ssigma_{T,h}^{\ell+1}\in \bm{M}_{\sigma,h}^k$ 
such that they are the solutions to the following minimization problems:
\begin{subequations}
    \label{pdhg}
\begin{align}
\label{phi-solve}
&&\begin{split}
 \argmin_{\phi_h\in V_{\phi,h}^{k+1}}~& \frac{1}{2\sigma_{\phi}} \stint{|\partial_t(\phi_h-\phi_h^\ell)|^2+ |\nabla(\phi_h-\phi_h^\ell)|^2 + |\phi_h-\phi_h^\ell|^2}\\
 &
 % \!\!\!\!\!\!\!\!\!\!\!\!\!\!\!
 +\frac{1}{2\sigma_{\phi}} \sint{|\phi_h(T,\cdot)-\phi_h^\ell(T,\cdot)|^2}\\
 &
 +\stint{
 h_h^\ell \partial_t\phi_h+ \bmm_h^\ell\cdot\nabla\phi_h + s_h^\ell\phi_h}\\
 % -\sint{h_{T,h}^\ell \phi_h(T,\cdot) -h^0\phi_h(0,\cdot)},\\
 &
 -\sint{h_{T,h}^\ell \phi_h(T,\cdot) -h^0\phi_h(0,\cdot)},
\end{split}\\
% \end{align}
% \begin{align}
\label{sigma-solve}
&&\begin{split}
 \argmin_{\ssigma_h\in \bm{V}_{\sigma,h}^{k}}~& \frac{1}{2\sigma_{\phi}} \stint{|\ssigma_h-\ssigma_h^\ell|^2+ |\partial_t(\phi_h^{\ell+1}-\phi_h^{\ell})+\alpha\nabla\cdot(\ssigma_h-\ssigma_h^\ell)|^2}\\
 &+\stint{
\nn_h^\ell\cdot\ssigma_h+\alpha h_h^\ell\nabla\cdot\ssigma_h},
\end{split}\\
% \end{align}
% \begin{align}
\label{xi-solve}
&&\begin{split}
 \argmin_{\xi_h\in V_{\xi,h}^{k,k+1}}~& \frac{1}{2\sigma_{\phi}} \stint{|\xi_h-\xi_h^\ell|^2+ |(\ssigma_h^{\ell+1}-\ssigma_h^\ell)
 +\alpha\nabla(\xi_h-\xi_h^\ell)|^2}\\
 &-\stint{
 p_h^\ell\xi_h-\alpha\nn_h^\ell\cdot\nabla\xi_h},
\end{split}\\
% \end{align}
% \begin{align}
\label{theta-solve}
&&\begin{split}
 \argmin_{\bm{\theta}_h^{\ell+1}\in \bm{V}_{\sigma,h}^{k}}~& \frac{1}{2\sigma_{\phi}} \stint{|\bm{\theta}_h-\bm{\theta}_h^\ell|^2+ |\xi_h-\xi_h^\ell
 +\alpha\nabla\cdot(\bm{\theta}_h-\bm{\theta}_h^\ell)|^2}\\
 &
 -
 \intst{\bmq_h^\ell\cdot\bm{\theta}_h+\alpha p_h^\ell\nabla \cdot\bm{\theta}_h},
\end{split}\\
% \end{align}
% \STATE \quad Compute $\ssigma_{T,h}^{\ell+1}\in \bm{M}_{\sigma,h}^k$ such that  it is the following minimizer
% \begin{align}
\label{sigmaT-solve}
&&\begin{split}
 \argmin_{\ssigma_{T,h}\in \bm{M}_{\sigma,h}^{k}}~& \frac{1}{2\sigma_{\phi}} \sint{|\alpha\nabla\cdot(\ssigma_{T,h}-\ssigma_{T,h}^\ell)
 -(\phi_h^{\ell+1}(T,\cdot)-\phi_h^{\ell}(T,\cdot))
 |^2}\\
 &
\!\!\!\!\!\!+ \frac{1}{2\sigma_{\phi}}\sint{|\ssigma_{T,h}-\ssigma_{T,h}^\ell|^2}
+ \ints{\bmn_{T,h}^\ell \cdot\bm{\sigma}_{T,h}+ \alpha h_{T,h}^\ell \nabla\cdot\bm{\sigma}_{T,h}}.
\end{split}
\end{align}

\STATE \quad \quad Extrapolate $\tilde{\bPphi}_h^{\ell+1}= 2{\bPphi}_h^{\ell+1}-{\bPphi}_h^{\ell}$, and 
$\tilde{\ssigma}_{T,h}^{\ell+1}= 2{\ssigma}_{T,h}^{\ell+1}-{\ssigma}_{T,h}^{\ell}$.
\STATE \quad\quad Compute $\uu_h^{\ell+1}\in [W_h^k]^{3d+3}$ 
and $\uu_{T,h}^{\ell+1}\in [M_h^k]^{d+1}$ 
such that they are the following minimizers:
\begin{align}
\label{u-solve}
&&\begin{split}
     \argmin_{\uu_h\in [W_h^k]^{3d+3}, h_h\ge0}&~ \frac{1}{2\sigma_{u}} \stint{|\uu_h-\uu_h^\ell|^2}
   +\stint{H(\uu_h)}  \\
& - \intst{h_h \partial_t\tilde{\phi}_h^{\ell+1}+ \bmm_h\cdot\nabla\tilde{\phi}_h^{\ell+1} + s_h\tilde{\phi}_h^{\ell+1}}\\
&-\intst{\bmn_h\cdot\tilde{\bm{\sigma}}_h^{\ell+1} + \alpha h_h\nabla\cdot\tilde{\bm{\sigma}}_h^{\ell+1}}\\
&+\intst{p_h \tilde{\xi}_h^{\ell+1}- \alpha\bmn_h\cdot\nabla \tilde{\xi}_h^{\ell+1}}\\
&+
\intst{\bmq_h\cdot\tilde{\bm{\theta}}_h^{\ell+1}+\alpha p_h\nabla \cdot\tilde{\bm{\theta}}_h^{\ell+1}}.
\end{split}\\
\label{ut-solve}
&&\begin{split}
     \argmin_{\uu_{T,h}\in [M_h^k]^{d+1}, h_{T,h}\ge0}&~ \frac{1}{2\sigma_{u}} \sint{|\uu_{T,h}-\uu_{T,h}^\ell|^2}
   +\sint{H_T(\uu_{T,h})}  \\
&
-\ints{\bmn_{T,h} \cdot\widetilde{\bm{\sigma}}_{T,h}^{\ell+1}+ \alpha h_{T,h} \nabla\cdot\tilde{\bm{\sigma}}_{T,h}^{\ell+1}}
+\ints{h_T\widetilde\phi_{T,h}^{\ell+1}}.
\end{split}
\end{align}
\end{subequations}
\end{algorithmic}
\end{algorithm}

\begin{remark}
    The dual variable updates in Algorithm \ref{alg:1}
    are constant-coefficient linear elliptic problems, for which scalable solvers have been well-developed in the literature. 
    Preconditioned conjugate gradient methods are used to solve these coupled elliptic problems with a geometric multigrid preconditioner for the diffusion-type problems \eqref{phi-solve} and \eqref{xi-solve},
    and a low-order preconditioner developed in \cite{Pazner23}
    for the $H(\mathrm{div})$-elliptic problems \eqref{sigma-solve} and \eqref{theta-solve}.
    % We use preconditioned conjugate gradient method with a geometric multigrid preconditioner for the space-time diffusion operator 
    % for $\phi_h$ in \eqref{phi-solve} and for 
    % the (spatial) diffusion operator for $\xi_h$ in \eqref{xi-solve}, and with a simple Jacobi preconditioner
    % for the $H(\mathrm{div})$-elliptic operators in \eqref{sigma-solve} and \eqref{theta-solve}.
    Meanwhile, the primal variable updates in \eqref{u-solve} and 
    \eqref{ut-solve} are nonlinear but decoupled for each degree of freedom on the quadrature point, hence they can be solved efficiently in parallel.
\end{remark} 

\subsection{A fully discrete JKO scheme to the PDE \eqref{eq:model}}
% Finally, we conclude this section by presenting 
\blue{Next we present
a fully discrete JKO scheme and its simplified version for solving the PDE \eqref{eq:model}}.
As mentioned in Remark \ref{jko}, by taking the functionals 
$\mathcal{F}=0$, and $\mathcal{G}(h) = \Delta t \mathcal{E}(h)$, 
and setting terminal time $T=1$ and the parameter $\beta = 0$, 
the MFC optimization problem \blue{in Definition \ref{mfc}} becomes the dynamic formulation of a JKO temporal discretization scheme which advances solution in time with step size $\Delta t$. Since the parameter $\beta=0$, we do not need the auxiliary variables $\bm n, p$, and $\bm q$ in the definition of the functional $H(\bm u)$  in \eqref{Hu1}. Hence the fully discrete scheme \eqref{saddleH} is reduced to the following:
\begin{equation}
\label{saddleH-JKO}
\begin{split}
\inf_{\bmu_h, \bmu_{T,h}}
\sup_{\phi_h, \bm{\sigma}_{T,h}}& 
\intst{\frac{|\bmm_h|^2}{2V_1(h_h)}
+\frac{|s_h|^2}{2V_2(h_h)}}
+
\Delta t\ints{U(h_{T,h})
+\frac{|\bm n_{T,h}|^2}{2}}
\\ 
&- \intst{h_h \partial_t\phi_h+ \bmm_h\cdot\nabla\phi_h + s_h\phi_h}
\\
&-\ints{\bmn_{T,h} \cdot\bm{\sigma}_{T,h}+ \alpha h_{T,h} \nabla\cdot\bm{\sigma}_{T,h}}\\
&+\ints{h_{T,h}\phi_h(T, x)-h_0\phi_h(0, x)},
\end{split}
\end{equation}
where $\bm u_h = (h_h, \bmm_h, s_h)\in [W_h^k]^{d+2}$.
Moreover, the corresponding PDHG Algorithm \ref{alg:1}
will be further simplified where the three elliptic solves in \eqref{sigma-solve}, \eqref{xi-solve}, and \eqref{theta-solve} are not needed, and the pointwise optimization problem \eqref{u-solve}
does not have the $\bm n, p, \bm q$ contributions.

\blue{
Since the fully discrete scheme \eqref{saddleH-JKO} is first-order accurate in time, we may use a one-step approximation 
of the time integrals in \eqref{saddleH-JKO} to reduce the computational cost without sacrificing the first-order temporal accuracy. 
This leads to the following {\it approximated JKO} scheme: given time step size $\Delta t >0$ and approximate solution $h_h^{n-1}\in M_h^k$
at time $t^{n-1}$, find the approximation $h_h^n\in M_h^k$ at next time level $t^n = t^{n-1}+\Delta t$ by solving the saddle-point problem
\begin{equation}
\label{saddleH-JKOa}
\begin{split}
h_h^{n}= \arg\inf_{h_h}\;\inf_{\bmu_h}
\sup_{\phi_h, \bm{\sigma}_{h}}& 
\ints{\frac{|\bmm_h|^2}{2V_1(h_h)}
+\frac{|s_h|^2}{2V_2(h_h)}}
+
\Delta t\ints{U(h_{h})
+\frac{|\bm n_{h}|^2}{2}}
\\ 
&+ \ints{(h_h-h_h^{n-1})\phi_h - \bmm_h\cdot\nabla\phi_h - s_h\phi_h}
\\
&-\ints{\bmn_{h} \cdot\bm{\sigma}_{h}+ \alpha h_{h} \nabla\cdot\bm{\sigma}_{h}},%\\
% &+\ints{(h_{h}-h_0)\phi_h(x)}.
\end{split}
\end{equation}
Here $\bm u_h = (h_h, \bmm_h, s_h, \bmn_h)\in [M_h^k]^{2d+2}$, 
$\bm\sigma_h\in \bm{M}_{\sigma,h}^k $,
and 
$
    \phi_h \in V_h^{k+1}$, where $V_h^{k+1}$ is the following $H^1$-conforming finite element space on the spatial domain $\Omega$:
$V_h^{k+1}:=\{v\in H^1(\Omega): v|_{K_\ell}
\in Q^{k+1}(K_\ell) \quad \forall \ell\}.
$
}

\blue{
The saddle-point problem \eqref{saddleH-JKOa} can be solved using a similar optimization solver as Algorithm \ref{alg:1}.
However, our preliminary numerical results indicate a large number of iterations (see the numerical results in Section 5.1) is needed for the convergence of Algorithm \ref{alg:1} for solving the PDE \eqref{eq:model} when the default optimization parameters $\sigma_\phi=\sigma_u=1$ are used, especially during the time when the droplets start to merge.
Here we present a direct solution strategy for \eqref{saddleH-JKOa} by solving the critical point system using the Newton-Raphson  method. 
Taking the first-order variation with respect to the variables $\bmu_h$, $\phi_h$, and $\bm\sigma_h$, we get the following nonlinear system of equations:
find 
$(h_h, \bmm_h, s_h, \bmn_h)\in [M_h^k]^{2d+2}$, 
$\bm\sigma_h\in \bm{M}_{\sigma,h}^k$
and $\phi_h\in V_h^{k+1}$ such that
\begin{subequations}
    \label{newton}
    \begin{align}
        \ints{(\frac{\bmm_h}{V_1(h_h)}-\nabla\phi_h)\cdot\delta\bmm_h}=&\;0,\quad \forall \delta\bmm_h\in [M_h^k]^d,\\
        \ints{(\frac{s_h}{V_2(h_h)}-\phi_h)\cdot\delta s_h}=&\;0,\quad \forall \delta s_h\in M_h^k,\\
        \ints{\left(\Delta t\bmn_h-\bm\sigma_h\right)\cdot\delta\bmn_h}=&\;0,\quad \forall \delta\bmn_h\in [M_h^k]^d,\\
        \ints{
(-\frac{|\bmm_h|^2}{2V_1^2}V_1'
-\frac{|s_h|^2}{2V_2^2}V_2'
+\Delta t U'(h_h))\cdot\delta h_h} \quad\;\;\nonumber\\
+\ints{(\phi_h-\alpha\nabla\cdot \bm\sigma_h)\cdot\delta h_h
        }=&\;0,\quad \forall \delta h_h\in M_h^k,\\
  \ints{\bmn_{h} \cdot\delta \bm{\sigma}_{h}+ \alpha h_{h} \nabla\cdot\delta\bm{\sigma}_{h}}
  =&\;0,\quad \forall \delta \bm\sigma_h\in \bm M_{\sigma,h}^k,\\
\ints{(h_h-h_h^{n-1})\delta\phi_h - \bmm_h\cdot\nabla\delta\phi_h - s_h\delta\phi_h}
=&\;0,\quad \forall \delta \phi_h\in V_{h}^{k+1}.
    \end{align}
\end{subequations}
The Newton-Raphson method is then used to solve this coupled nonlinear system. 
}

\blue{
\subsection{Discussions}
We conclude this section with a discussion on the novelty and challenges of our modeling and simulation approaches for droplet dynamics.
The major novelty is the introduction of a general MFC model for droplet dynamics in Definition \ref{mfc}. It gives a general mathematical framework for the optimal control and manipulation of the evolution of thin-film droplets. 
% The control variables $\bm v_1$ and $v_2$ is related to the physical active field $\zeta$ and temperature $\Theta$
% via the equation \eqref{v1v2}, which could potentially be used to guide physical control of droplets in real experiments.
However, this generality introduces new challenges.
}

\blue{The first challenge is how to choose appropriate potential and terminal functionals $\mathcal{F}$ and $\mathcal{G}$ in the objective function in \eqref{mfcA1}. 
Ultimately, the choice of these functionals shall depend on the system under consideration. We will study the effect of different choices in our future work.}

\blue{The second challenge is how to link the mathematical MFC formulation in \eqref{mfcA} to actual physical processes and guide physical experiments.
At the current stage, there is still a gap between our formulation and physical experiments. 
Our MFC formulation generates an optimal path of the surface height $h$ in the sense that the objective functional in \eqref{mfcA1}
is minimized, along with two control variables $\bm v_1$ and $v_2$. 
How to connect these control variables with physical processes is an interesting question. 
Here we have provided an initial discussion to partially answer this question in Section \ref{sec:model}, where 
we argue that the control variables $\bm v_1$ and $v_2$ in the MFC model \ref{mfcA} are related to a physical active field $\zeta$ via the equation \eqref{v1v2} for the modeling of viscous volatile thin films with an active suspension on a partially wetting substrate. The physical active field $\zeta$  may be used to guide physical experiments. A comprehensive discussion on the linkage of our MFC
formulation with physical processes will be explored in future work.
}

\blue{
The third challenge is solving the resulting (nonconvex) optimization problem efficiently.
 We have presented a high-order space-time finite element method to discretize the optimization problem 
\eqref{mfcA}. It leads to a discrete saddle-point problem \eqref{saddleH}, and we introduced a first-order optimization solver, Algorithm \ref{alg:1},
to solve this discrete saddle-point problem. However, the issue of numerical convergence of Algorithm \ref{alg:1} is not addressed in this work.
}

\section{Numerical results}\label{sec4}
In this section, we present numerical results for both the PDE \eqref{eq:model} and the mean field control (MFC) problem \blue{in Definition \ref{mfc}}.

% on a two-dimensional spatial domain. 
\blue{
In Subsection \ref{ex1}, we provide numerical results for the PDE \eqref{eq:model} using the approximated JKO scheme \eqref{saddleH-JKOa}.
Both one- and two-dimensional numerical results are provided. We obtain consistent simulation results when compared  with classical FEM simulations. 
We use both the optimization based solver Algorithm \ref{alg:1} and the Newton-Raphson method to solve the discrete saddle-point problem \ref{saddleH-JKOa}. 
Interestingly, we 
observe that the Newton-Raphson method can be orders of magnitude faster than the optimization solver due to the need of large amount of iteration counts for accuracy considerations in the optimization solver. 
Improving the performance of the optimization based solver will be a focus of our future work.
}

\blue{
In Subsection \ref{ex2}, we illustrate examples of numerically solving the MFC problem \blue{in Definition \ref{mfc}} using the high-order finite element scheme 
\eqref{saddleH}. Specifically, we apply Algorithm \ref{alg:1} to solve the discrete saddle-point problem \eqref{saddleH}, where the optimization parameters are taken as $\sigma_\phi=\sigma_u = 1$, starting with $h_h = h_0$ as the initial surface height and setting all other initial variables to be zero.
}

\blue{
The finite element software packages  MFEM \citep{MFEM} and NGSolve \citep{Schoberl16} 
are used in the implementation.
Reproducible sample code and animation videos are available in the GitHub repository \url{https://github.com/gridfunction/DROPLET_MFC}. 
}
% {\color{red}Do we want to show some comparisons with small and large $\beta$? Also, do we want to plot the optimal control for $\bm v_1, v_2$, perhaps for at least one example?}

% Throughout, we take the spatial domain to be a unit square 
% $\Omega=[0,1]^2$,  $\gamma=0.04$ in $V_2(h)$ from \eqref{mob}, $\alpha=0.01$ in the energy functional \eqref{Energy}, $\epsilon=0.3$ and $\pstar=0.5$
% in the disjoining pressure in \eqref{disj}.
\blue{Throughout, we use the following mobility functions and energy functional for the PDE model \eqref{eq:model}: 
\begin{align}
\label{ppp}
V_1(h) = h^3, \;V_2(h) = \frac{0.04}{h+0.1}, 
\mathcal{E}(h) = \int_{\Omega} \frac{10^{-4}}{2}
|{\nabla{h}}|^2 + 
\left(\frac{0.3^3}{3h^3}-
\frac{0.3^2}{2h^2}-0.5 h\right)
\, dx\,,
\end{align}
which corresponds to taking parameters $\gamma=0.04, K=0.1, \alpha=0.01, \epsilon=0.3, \pstar =0.5$
in \eqref{mob}--\eqref{disj}.
}
\blue{We note that by rescaling the variables as $\hat{x} = x /\alpha, ~\hat{y} = y /\alpha,~ \hat{t} = t/{\alpha^2}$, and $\hat{\gamma} =\alpha^2 \gamma $, the original thin-film model \eqref{eq:model} can be rewritten as
\begin{equation}
\label{eq:modified_model}
\frac{\partial h}{\partial \hat{t}} = \nabla\cdot \left( V_1(h)\nabla \hat{P}(h)\right) - \frac{\hat{\gamma}}{h+K}\hat{P}(h), \quad \hat{P}(h) = \Pi(h)-\pstar- {\nabla^2 h}, \text{ on } [0, T] \times\hat{\Omega},
\end{equation}
where the spatial domain $\hat{\Omega} = [0, 1/\alpha]^2$. The form of \eqref{eq:modified_model} is consistent with the volatile thin-film model studied in \cite{ji2018instability,ji2024coarsening}, where typically droplet dynamics are studied over a large spatial domain. 
% Here, we use the model \eqref{eq:model} with the additional parameter $\alpha$ to reduce the size of the computational domain and hence lower the computational cost.
}

\subsection{JKO scheme for the PDE \eqref{eq:model}.}
\label{ex1}

\begin{figure}
\centering
\subfigure[t=0.05]{
\includegraphics[width=0.48\textwidth]{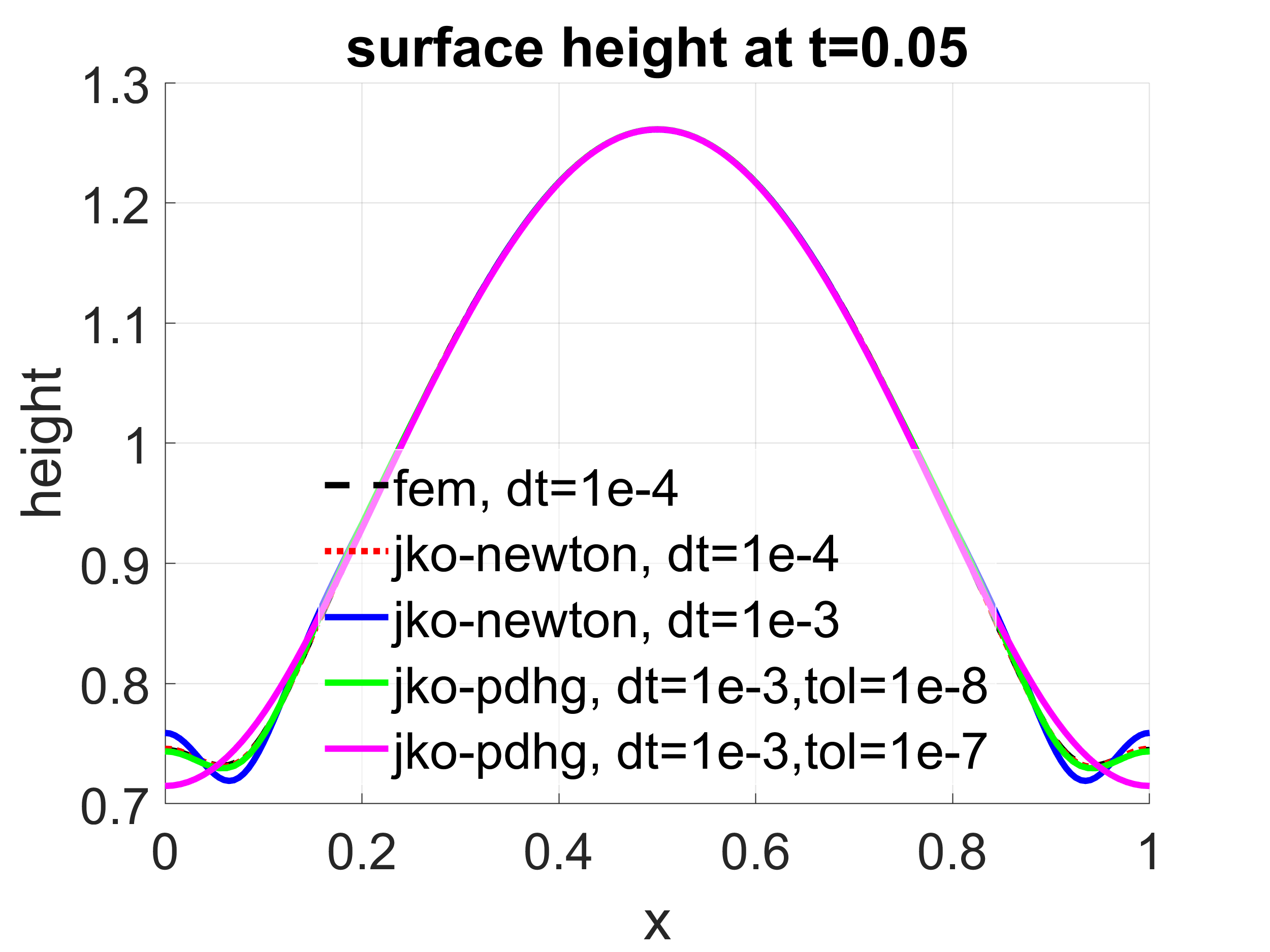}}
\subfigure[t=0.10]{
\includegraphics[width=0.48\textwidth]{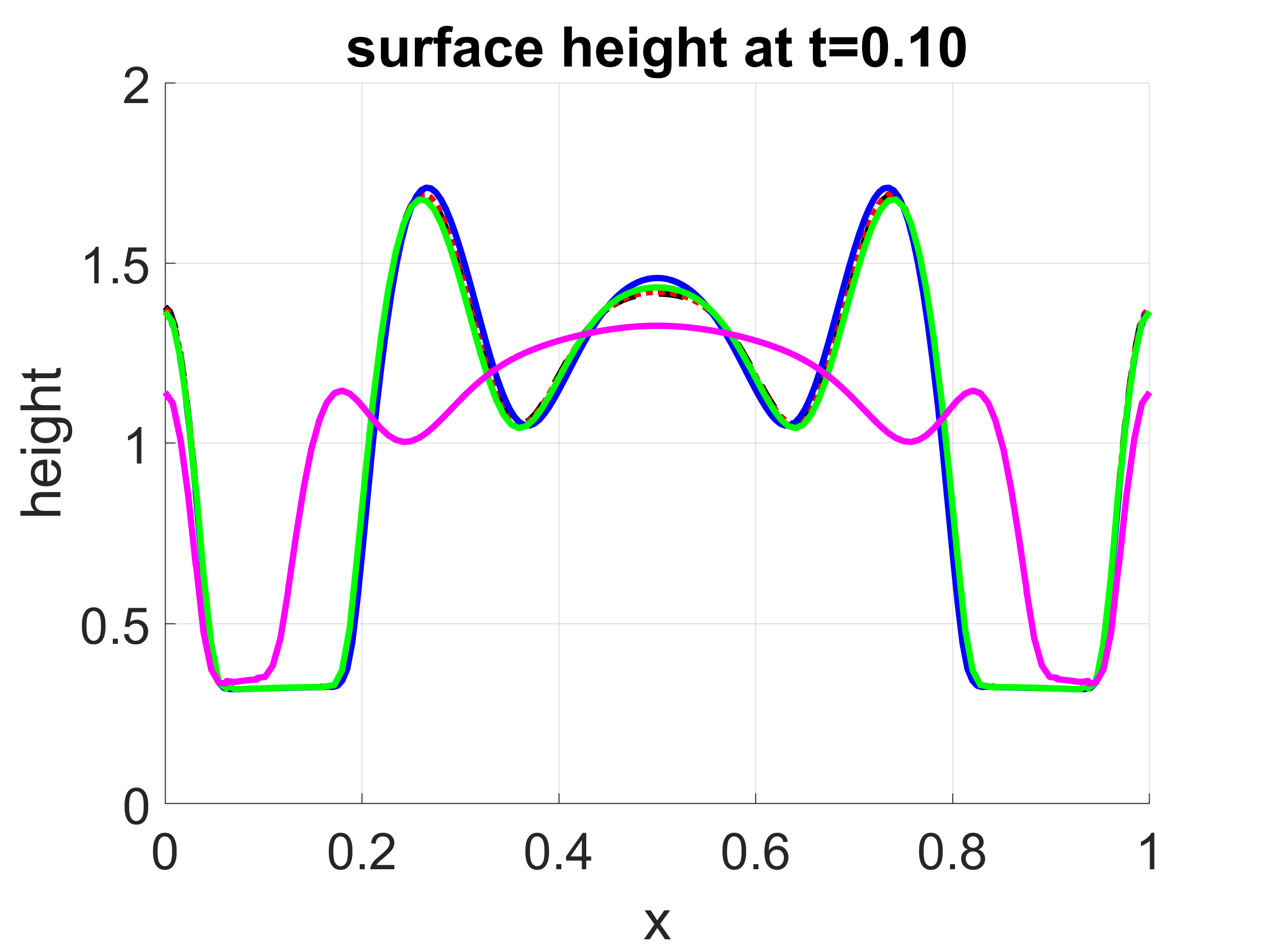}}
\subfigure[t=0.15]{
\includegraphics[width=0.48\textwidth]{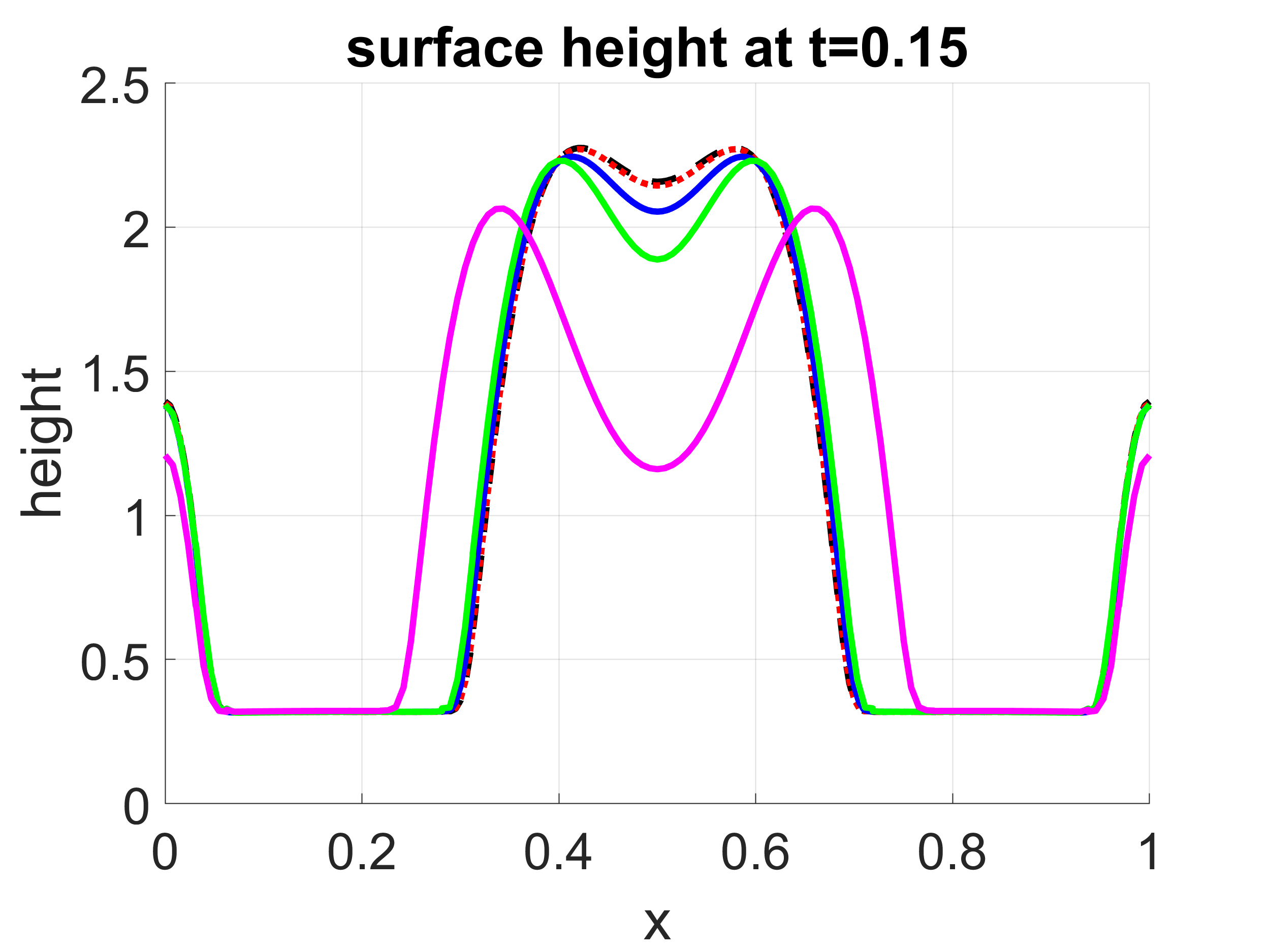}}
\subfigure[t=0.20]{
\includegraphics[width=0.48\textwidth]{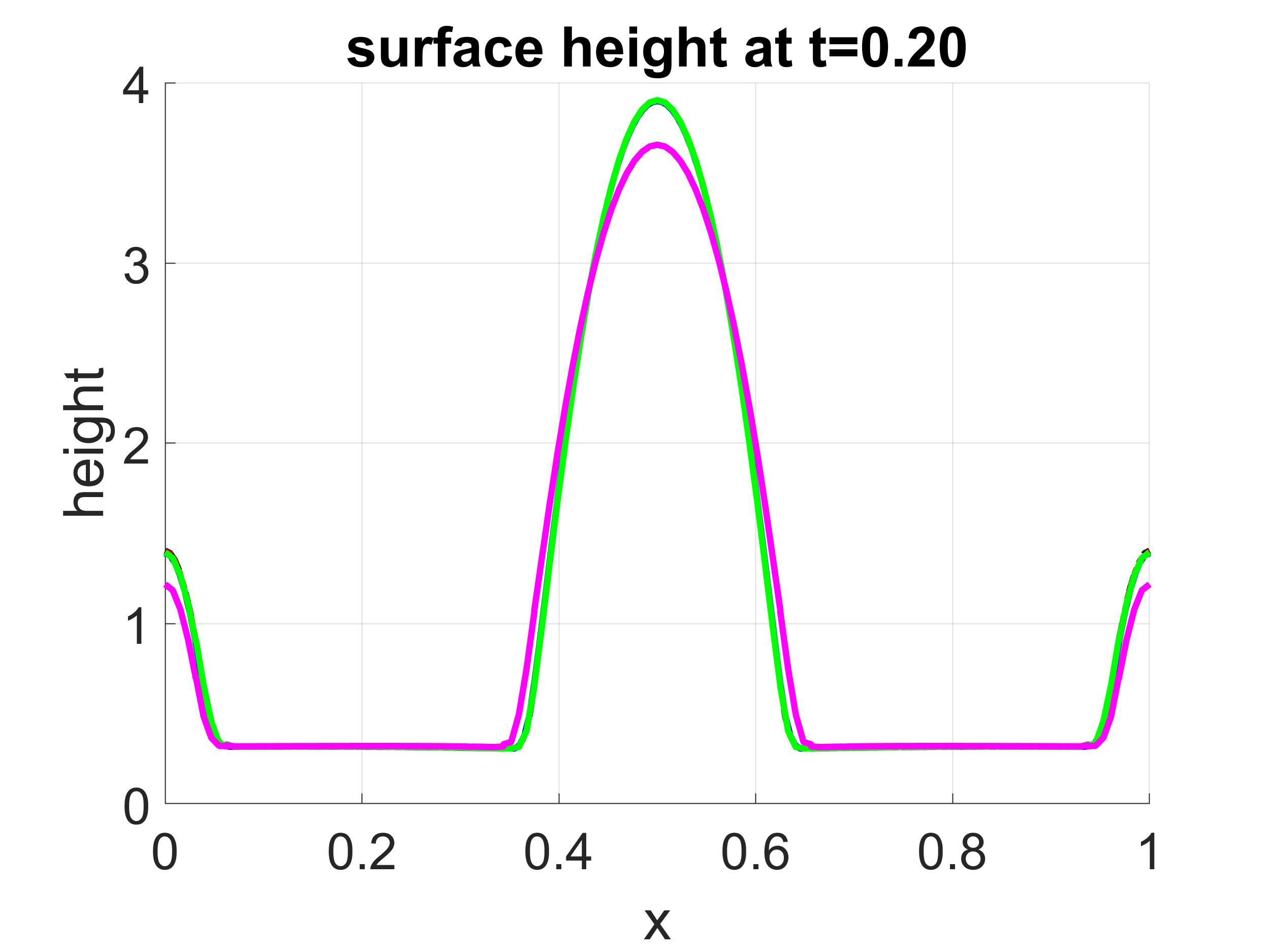}}
\subfigure[t=0.30]{
\includegraphics[width=0.48\textwidth]{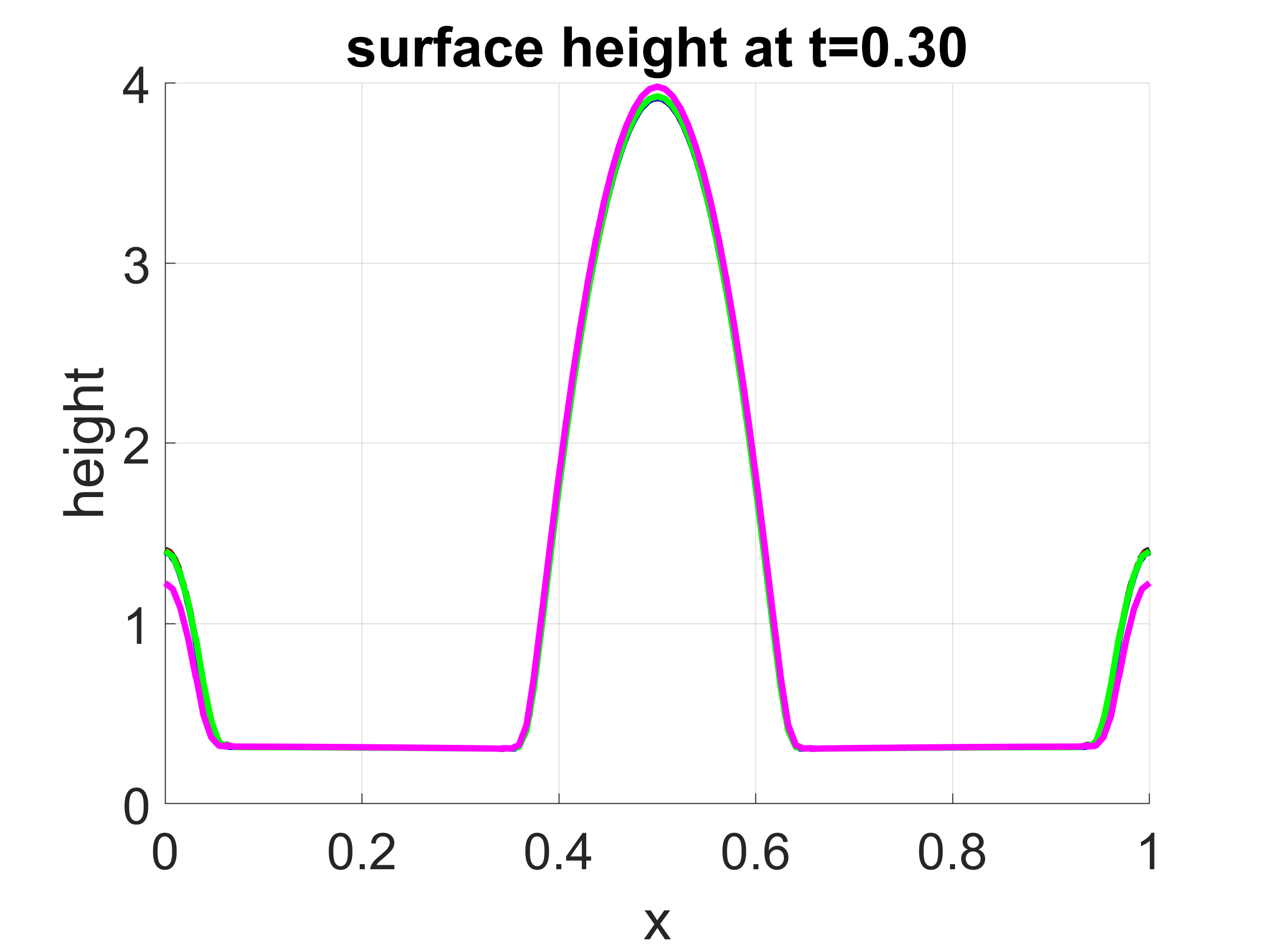}}
\subfigure[t=0.40]{
\includegraphics[width=0.48\textwidth]{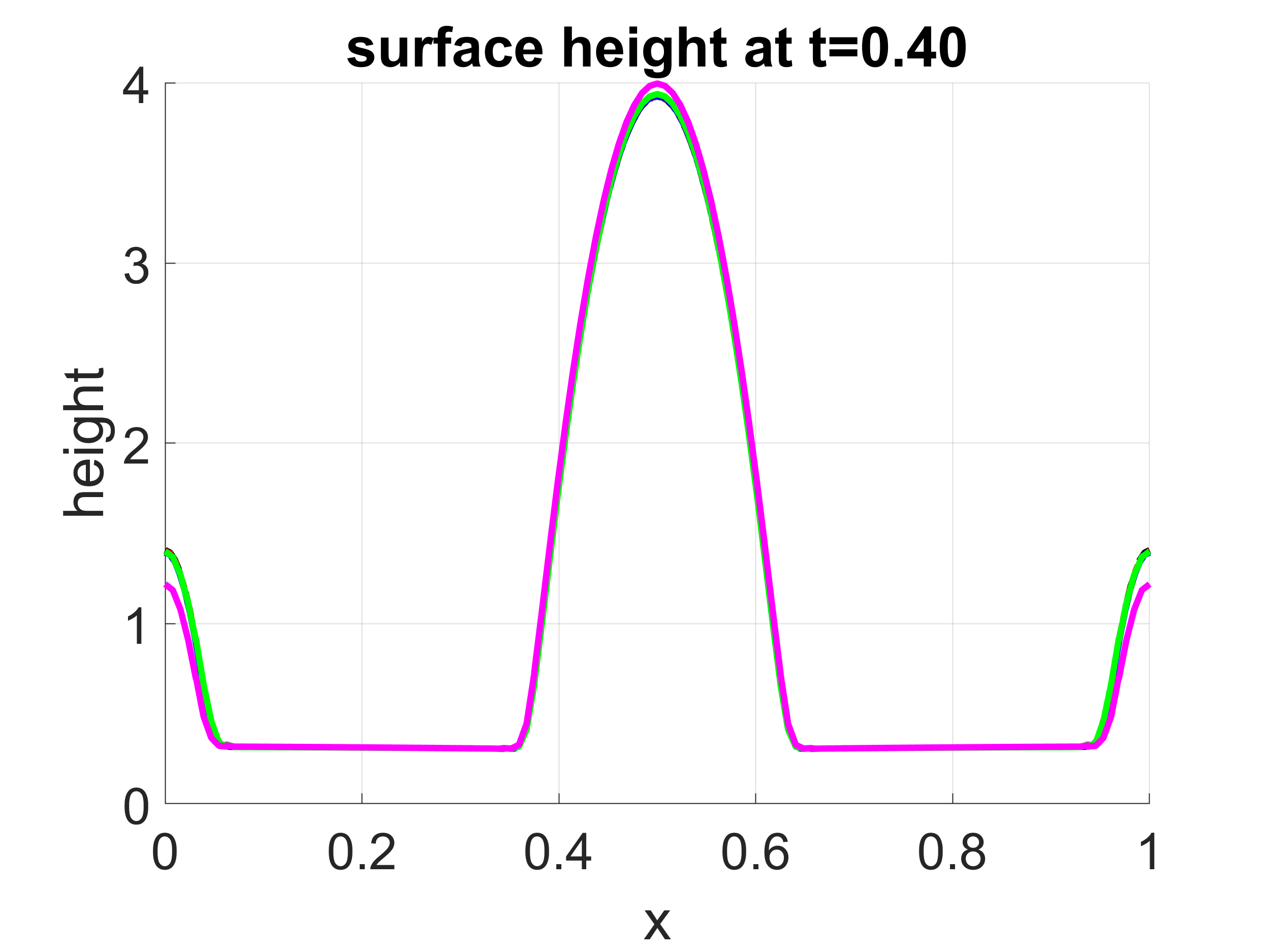}}
\caption{
Snapshots of the surface height for 1D thin-film equation \eqref{eq:model} at different times.
Dashed black line: FEM scheme \eqref{FEM} with time step size $\Delta t = 10^{-4}$;
Dotted red line: Approximated JKO scheme \eqref{saddleH-JKOa} with $\Delta t = 10^{-4}$ using
the Newton-Raphson solver; 
Blue line: Approximated JKO scheme \eqref{saddleH-JKOa} with $\Delta t = 10^{-3}$ using
the Newton-Raphson solver; 
Green line: Approximated JKO scheme \eqref{saddleH-JKOa} with $\Delta t = 10^{-3}$ using
the PDHG solver with tolerance $tol=10^{-8}$; 
Magenta line: Approximated JKO scheme \eqref{saddleH-JKOa} with $\Delta t = 10^{-3}$ using
the PDHG solver with tolerance $tol=10^{-7}$.
% showing stages in the dewetting of a one-dimensional thin film with weak condensation: 
% (b) formation and spreading of dry spots, (c) break-up of ridges, (d) formation of droplets that slowly condense in time.
}
\label{fig:jko1D}
\end{figure}

% \blue{
In this subsection, we numerically solve the PDE \eqref{eq:model}
using the approximated JKO scheme  \eqref{saddleH-JKOa}. 
Both one- and two-dimensional numerical examples are considered.
\red{In Appendix B, we present details of the spatial/temporal mesh convergence studies of the scheme \eqref{saddleH-JKOa}.}

\subsubsection{One-dimensional example}
We take the computational domain to be a periodic line segment $\Omega=[0,1]$.
The initial condition is chosen to be $h_0(x)=1-0.2\cos(2\pi x)$, and terminal time is $T=0.4$.
The approximated JKO scheme \eqref{saddleH-JKOa} is applied on a uniform spatial mesh with $32$ elements
with polynomial degree $k=3$. 
In each JKO step, we use either the Newton-Raphson method to solve the critical point system \eqref{newton}, or the PDHG Algorithm \ref{alg:1} 
to solve the the saddle point problem \eqref{saddleH-JKOa}. 
The PDHG iteration is terminated when the $L_1$-norm of the difference of two consecutive surface heights $h_{h}$ is less than a prescribed tolerance $tol$. The simulation results are compared with the following classical finite element discretization for \eqref{eq:model}
with BDF2 time stepping using the same spatial discretization parameters: find $(h_h^n, P_h^n)\in [V_h^{k+1}]^2$ such that 
\begin{subequations}
    \label{FEM}
    \begin{align}
    \ints{
    \frac{3h_h^{n}-4h_h^{n-1}+h_h^{n-2}}{2\Delta t} q_h
    +
    V_1(h_h^n)\nabla P_h^n\cdot\nabla q_h
    +V_2(h_h^n)P_h^n q_h
    }=&\;0, \; \forall    q_h\in V_h^{k+1},\\
 \ints{
P_h^n r_h -U'(h_h^n)r_h-\alpha^2\nabla h_h^n\cdot\nabla r_h
    } = &\;0, \; \forall    r_h\in V_h^{k+1}.
\end{align}
\end{subequations}
In the FEM scheme \eqref{FEM}, we use a small time step size $\Delta t = 0.0001$, treating these results as reference solutions. For the Newton-Raphson method, we employ both a small time step size $\Delta t = 0.0001$ and a larger time step size $\Delta t = 0.001$. In the optimization approach, the time step size is set to $\Delta t = 0.001$, with varying stopping tolerances for each JKO iteration at $tol = 10^{-7}$ or $tol = 10^{-8}$. Figure~\ref{fig:jko1D} presents snapshots of surface height at different times for all these simulations.
% For the FEM scheme \eqref{FEM}, we take a small time step size $\Delta t = 0.0001$, and the results are treated as the reference solutions. 
% For the Newton-Raphson method, we take both a small time step size $\Delta t = 0.0001$ and a large time step size $\Delta t = 0.001$. 
% For the optimization approach, we take time step size to be $\Delta t = 0.001$, and vary the stopping tolerance for each JKO iteration to be 
% $tol = 10^{-7}$ or $tol=10^{-8}$.
% Snapshots of surface height at different times for all these simulations are shown in Figure~\ref{fig:jko1D}. 
First, we observe that the results for the PDHG algorithm with $tol=10^{-7}$ (in magenta) differ significantly from the other simulation results, particularly at intermediate times $t=0.10$ and $t=0.15$ when the droplet is forming and spreading through dry spots. This suggests that $tol=10^{-7}$ is insufficient for accuracy in the PDHG algorithm. Reducing the tolerance to $tol=10^{-8}$ yields results consistent with the Newton-Raphson method, though the dip at the center point $x=0.5$ at time $t=0.10$ is still not well captured compared to the reference solution. Additionally, the Newton-Raphson method results with $\Delta t = 0.0001$ overlap with the FEM simulation results, validating the proposed approximate JKO scheme \eqref{saddleH-JKOa}.
% }

% \blue{
% Furthermore, we record the number of PDHG iterations required to reach tolerances of $10^{-7}$ and $10^{-8}$ in Figure~\ref{fig:pdhg}. The iteration counts increase significantly during the droplet forming and dry spots spreading phase until about 200 JKO steps (time $=0.2$), with peak iteration counts exceeding $10^4$ for both cases. This figure suggests that the proposed optimization algorithm performs poorly for the approximate JKO scheme \eqref{saddleH-JKOa}, especially compared with the Newton-Raphson method, which typically requires up to 3 iterations for convergence. We will investigate the convergence issues of the optimization solver in future work, focusing on the roles of the PDHG parameters $\sigma_\phi$ and $\sigma_u$, and the time step size $\Delta t$ on overall convergence.
% \begin{figure}
% \centering
% \includegraphics[width=0.64\textwidth]{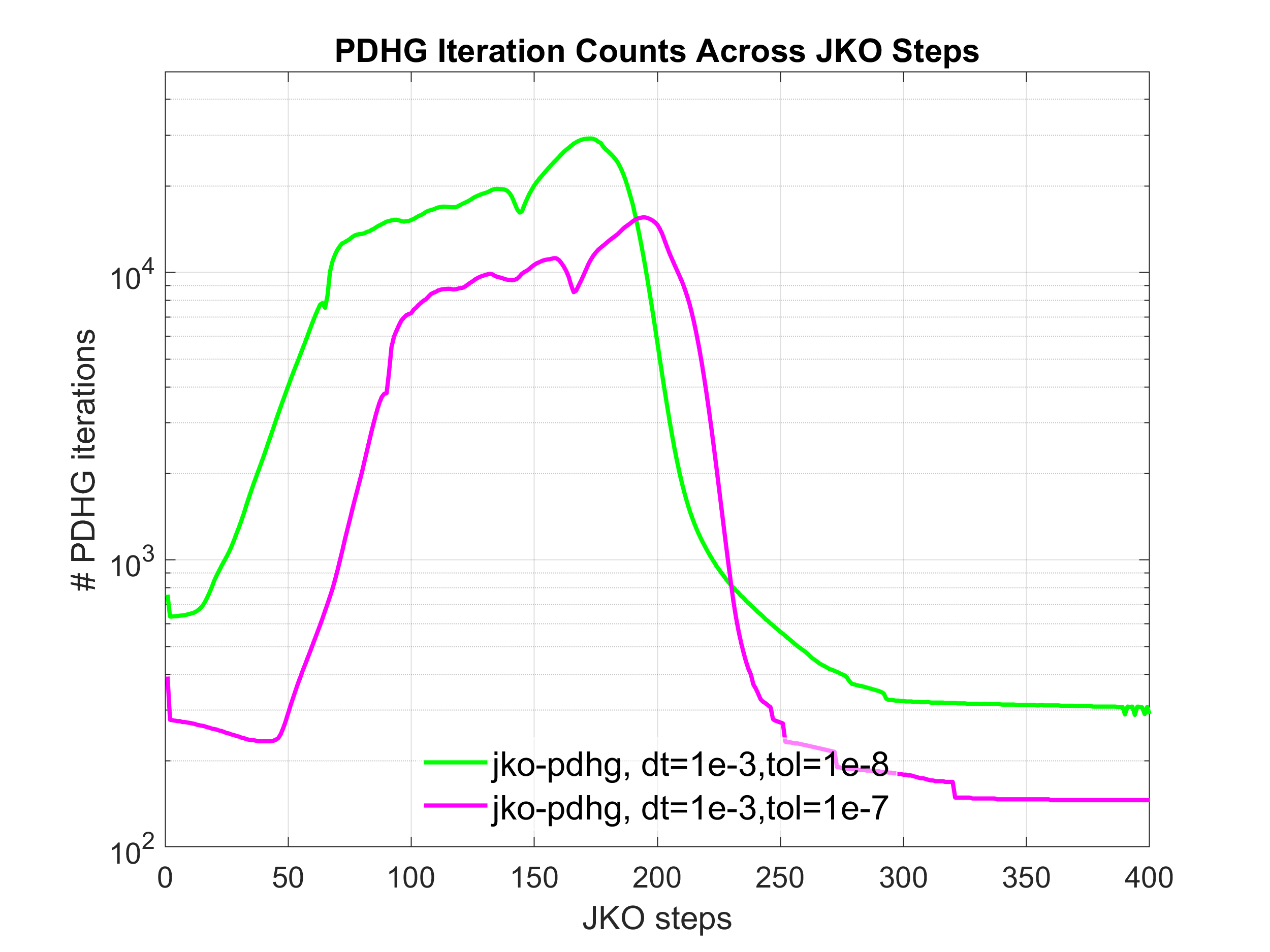}
% \caption{
% PDHG iteration counts across JKO steps for the 1D thin-film equation \eqref{gradientFlow}.}
% \label{fig:pdhg}
% \end{figure}
% }

\blue{
\subsubsection{Two-dimensional example}
\begin{figure}
\centering
% \subfigure[t=0.05]{
% \includegraphics[width=0.15\textwidth]{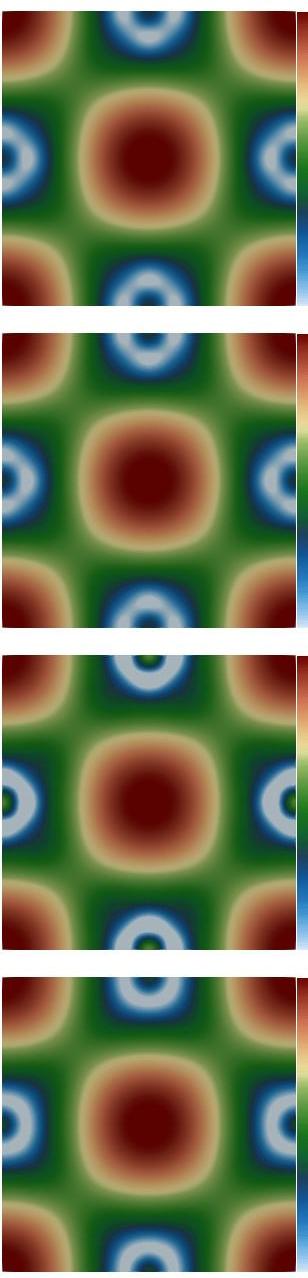}
% }
% \subfigure[t=0.10]{
% \includegraphics[width=0.15\textwidth]{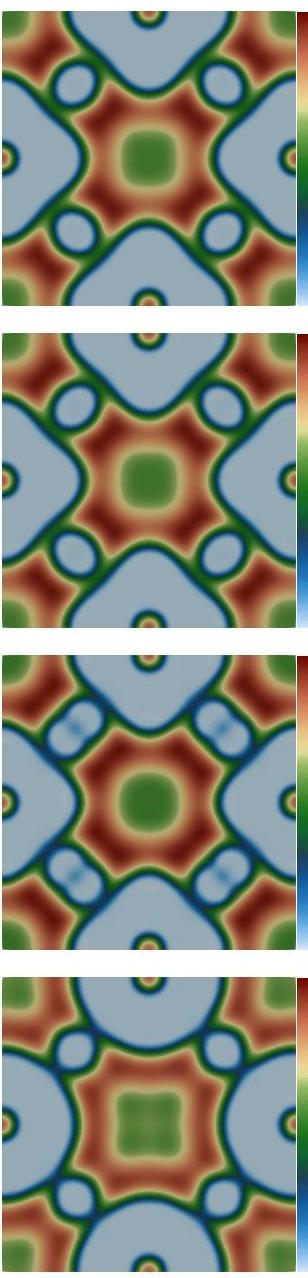}}
% \subfigure[t=0.15]{
% \includegraphics[width=0.15\textwidth]{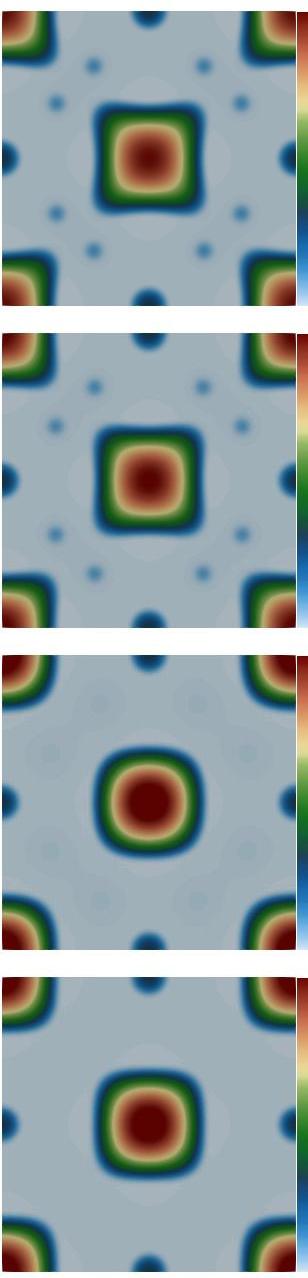}}
% \subfigure[t=0.20]{
% \includegraphics[width=0.15\textwidth]{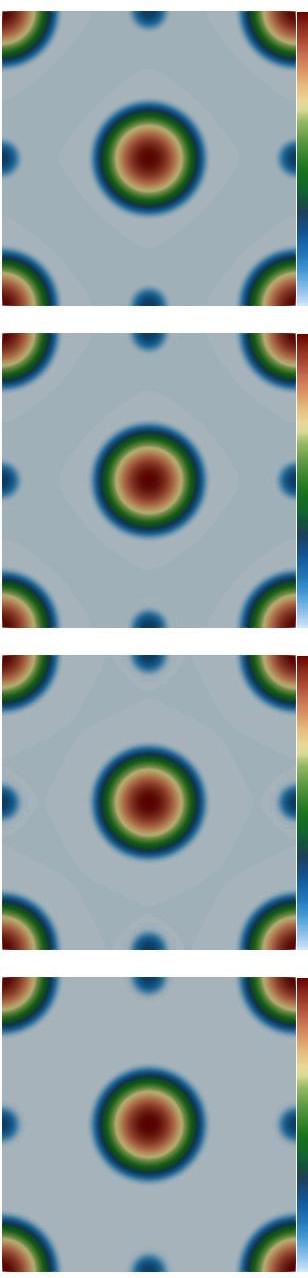}}
% \subfigure[t=0.30]{
% \includegraphics[width=0.15\textwidth]{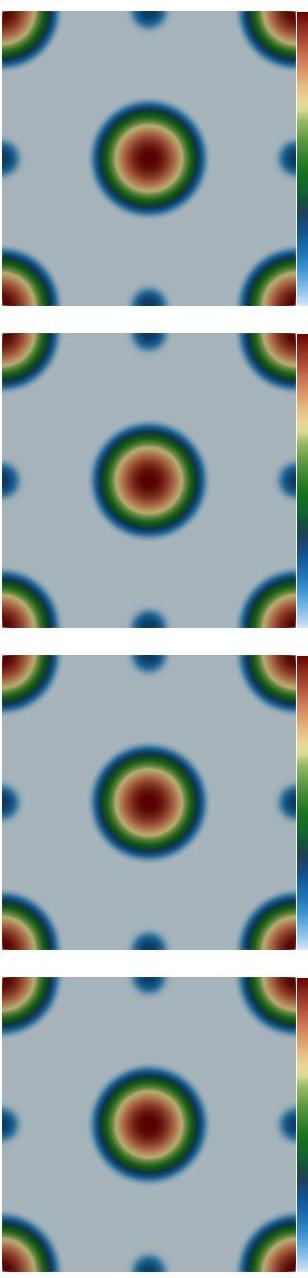}}
% \subfigure[t=0.40]{
% \includegraphics[width=0.15\textwidth]{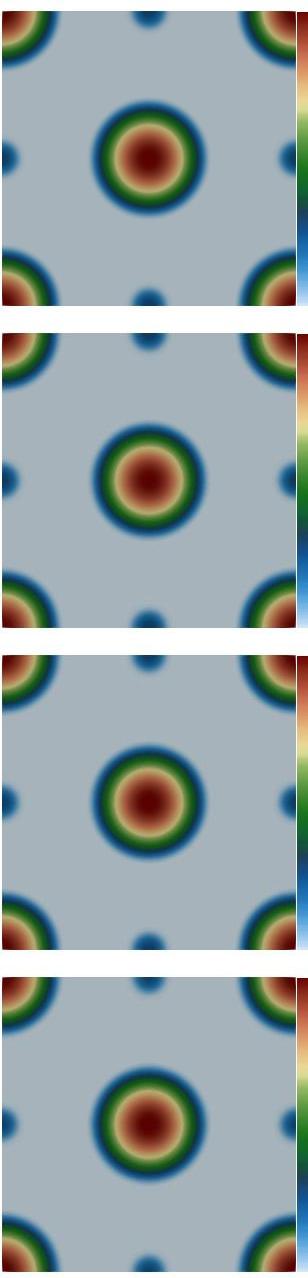}}
 % \hspace{-0.43cm}
\subfigure[t=0.05]{
              \begin{tikzpicture}
        \node [anchor=south west,inner sep=0] (image) at (0,0) {\includegraphics[width=0.14\textwidth]{figs/zaD2T5F.jpg}};  % Adjust the width as needed
                \begin{scope}[shift={(image.south east)}]
            % Adjust positioning of numbers vertically and horizontally
            \node at (0.12, 7.78) {\scalebox{0.45}{\textbf{1.3}}}; % First number (on top)
            \node at (0.12, 6.88) {\scalebox{0.45}{\textbf{1.0}}}; % Second number (in the middle)
            \node at (0.12, 5.98) {\scalebox{0.45}{\textbf{0.6}}}; % Third number (on bottom)
            \node at (0.12, 5.79) {\scalebox{0.45}{\textbf{1.3}}}; % First number (on top)
            \node at (0.12, 4.89) {\scalebox{0.45}{\textbf{1.0}}}; % Second number (in the middle)
            \node at (0.12, 3.99) {\scalebox{0.45}{\textbf{0.6}}}; % Third number (on bottom)
            \node at (0.12, 3.82) {\scalebox{0.45}{\textbf{1.3}}}; % First number (on top)
            \node at (0.12, 2.92) {\scalebox{0.45}{\textbf{1.0}}}; % Second number (in the middle)
            \node at (0.12, 2.02) {\scalebox{0.45}{\textbf{0.6}}}; % Third number (on bottom)
            \node at (0.12, 1.85) {\scalebox{0.45}{\textbf{1.3}}}; % First number (on top)
            \node at (0.12, 0.95) {\scalebox{0.45}{\textbf{1.0}}}; % Second number (in the middle)
            \node at (0.12, 0.05) {\scalebox{0.45}{\textbf{0.6}}}; % Third number (on bottom)
        \end{scope}        
    \end{tikzpicture}} 
    \hspace{-0.4cm}
\subfigure[t=0.10]{
              \begin{tikzpicture}
        \node [anchor=south west,inner sep=0] (image) at (0,0) {\includegraphics[width=0.14\textwidth]{figs/zaD2T10F.jpg}};  % Adjust the width as needed
                \begin{scope}[shift={(image.south east)}]
            % Adjust positioning of numbers vertically and horizontally
            \node at (0.12, 7.78) {\scalebox{0.45}{\textbf{2.0}}}; % First number (on top)
            \node at (0.12, 6.88) {\scalebox{0.45}{\textbf{1.2}}}; % Second number (in the middle)
            \node at (0.12, 5.98) {\scalebox{0.45}{\textbf{0.3}}}; % Third number (on bottom)
            \node at (0.12, 5.79) {\scalebox{0.45}{\textbf{2.0}}}; % First number (on top)
            \node at (0.12, 4.89) {\scalebox{0.45}{\textbf{1.2}}}; % Second number (in the middle)
            \node at (0.12, 3.99) {\scalebox{0.45}{\textbf{0.3}}}; % Third number (on bottom)
            \node at (0.12, 3.82) {\scalebox{0.45}{\textbf{2.0}}}; % First number (on top)
            \node at (0.12, 2.92) {\scalebox{0.45}{\textbf{1.2}}}; % Second number (in the middle)
            \node at (0.12, 2.02) {\scalebox{0.45}{\textbf{0.3}}}; % Third number (on bottom)
            \node at (0.12, 1.85) {\scalebox{0.45}{\textbf{2.0}}}; % First number (on top)
            \node at (0.12, 0.95) {\scalebox{0.45}{\textbf{1.2}}}; % Second number (in the middle)
            \node at (0.12, 0.05) {\scalebox{0.45}{\textbf{0.3}}}; % Third number (on bottom)
        \end{scope}        
    \end{tikzpicture}}
 \hspace{-0.4cm}
\subfigure[t=0.15]{
              \begin{tikzpicture}
        \node [anchor=south west,inner sep=0] (image) at (0,0) {\includegraphics[width=0.14\textwidth]{figs/zaD2T15F.jpg}};  % Adjust the width as needed
                \begin{scope}[shift={(image.south east)}]
            % Adjust positioning of numbers vertically and horizontally
            \node at (0.12, 7.78) {\scalebox{0.45}{\textbf{4.8}}}; % First number (on top)
            \node at (0.12, 6.88) {\scalebox{0.45}{\textbf{2.6}}}; % Second number (in the middle)
            \node at (0.12, 5.98) {\scalebox{0.45}{\textbf{0.3}}}; % Third number (on bottom)
            \node at (0.12, 5.79) {\scalebox{0.45}{\textbf{4.8}}}; % First number (on top)
            \node at (0.12, 4.89) {\scalebox{0.45}{\textbf{2.6}}}; % Second number (in the middle)
            \node at (0.12, 3.99) {\scalebox{0.45}{\textbf{0.3}}}; % Third number (on bottom)
            \node at (0.12, 3.82) {\scalebox{0.45}{\textbf{4.8}}}; % First number (on top)
            \node at (0.12, 2.92) {\scalebox{0.45}{\textbf{2.6}}}; % Second number (in the middle)
            \node at (0.12, 2.02) {\scalebox{0.45}{\textbf{0.3}}}; % Third number (on bottom)
            \node at (0.12, 1.85) {\scalebox{0.45}{\textbf{4.8}}}; % First number (on top)
            \node at (0.12, 0.95) {\scalebox{0.45}{\textbf{2.6}}}; % Second number (in the middle)
            \node at (0.12, 0.05) {\scalebox{0.45}{\textbf{0.3}}}; % Third number (on bottom)
        \end{scope}        
    \end{tikzpicture}}
 \hspace{-0.4cm}
\subfigure[t=0.20]{
              \begin{tikzpicture}
        \node [anchor=south west,inner sep=0] (image) at (0,0) {\includegraphics[width=0.14\textwidth]{figs/zaD2T20F.jpg}};  % Adjust the width as needed
                \begin{scope}[shift={(image.south east)}]
            % Adjust positioning of numbers vertically and horizontally
            \node at (0.12, 7.78) {\scalebox{0.45}{\textbf{5.7}}}; % First number (on top)
            \node at (0.12, 6.88) {\scalebox{0.45}{\textbf{3.0}}}; % Second number (in the middle)
            \node at (0.12, 5.98) {\scalebox{0.45}{\textbf{0.3}}}; % Third number (on bottom)
            \node at (0.12, 5.79) {\scalebox{0.45}{\textbf{5.7}}}; % First number (on top)
            \node at (0.12, 4.89) {\scalebox{0.45}{\textbf{3.0}}}; % Second number (in the middle)
            \node at (0.12, 3.99) {\scalebox{0.45}{\textbf{0.3}}}; % Third number (on bottom)
            \node at (0.12, 3.82) {\scalebox{0.45}{\textbf{5.7}}}; % First number (on top)
            \node at (0.12, 2.92) {\scalebox{0.45}{\textbf{3.0}}}; % Second number (in the middle)
            \node at (0.12, 2.02) {\scalebox{0.45}{\textbf{0.3}}}; % Third number (on bottom)
            \node at (0.12, 1.85) {\scalebox{0.45}{\textbf{5.7}}}; % First number (on top)
            \node at (0.12, 0.95) {\scalebox{0.45}{\textbf{3.0}}}; % Second number (in the middle)
            \node at (0.12, 0.05) {\scalebox{0.45}{\textbf{0.3}}}; % Third number (on bottom)
        \end{scope}        
    \end{tikzpicture}}
 \hspace{-0.4cm}
\subfigure[t=0.30]{
              \begin{tikzpicture}
        \node [anchor=south west,inner sep=0] (image) at (0,0) {\includegraphics[width=0.14\textwidth]{figs/zaD2T30F.jpg}};  % Adjust the width as needed
                \begin{scope}[shift={(image.south east)}]
            % Adjust positioning of numbers vertically and horizontally
            \node at (0.12, 7.78) {\scalebox{0.45}{\textbf{5.7}}}; % First number (on top)
            \node at (0.12, 6.88) {\scalebox{0.45}{\textbf{3.0}}}; % Second number (in the middle)
            \node at (0.12, 5.98) {\scalebox{0.45}{\textbf{0.3}}}; % Third number (on bottom)
            \node at (0.12, 5.79) {\scalebox{0.45}{\textbf{5.7}}}; % First number (on top)
            \node at (0.12, 4.89) {\scalebox{0.45}{\textbf{3.0}}}; % Second number (in the middle)
            \node at (0.12, 3.99) {\scalebox{0.45}{\textbf{0.3}}}; % Third number (on bottom)
            \node at (0.12, 3.82) {\scalebox{0.45}{\textbf{5.7}}}; % First number (on top)
            \node at (0.12, 2.92) {\scalebox{0.45}{\textbf{3.0}}}; % Second number (in the middle)
            \node at (0.12, 2.02) {\scalebox{0.45}{\textbf{0.3}}}; % Third number (on bottom)
            \node at (0.12, 1.85) {\scalebox{0.45}{\textbf{5.7}}}; % First number (on top)
            \node at (0.12, 0.95) {\scalebox{0.45}{\textbf{3.0}}}; % Second number (in the middle)
            \node at (0.12, 0.05) {\scalebox{0.45}{\textbf{0.3}}}; % Third number (on bottom)
        \end{scope}        
    \end{tikzpicture}}
        \hspace{-0.4cm}
\subfigure[t=0.4]{
              \begin{tikzpicture}
        \node [anchor=south west,inner sep=0] (image) at (0,0) {\includegraphics[width=0.14\textwidth]{figs/zaD2T40F.jpg}};  % Adjust the width as needed
                \begin{scope}[shift={(image.south east)}]
            % Adjust positioning of numbers vertically and horizontally
            \node at (0.12, 7.78) {\scalebox{0.45}{\textbf{5.7}}}; % First number (on top)
            \node at (0.12, 6.88) {\scalebox{0.45}{\textbf{3.0}}}; % Second number (in the middle)
            \node at (0.12, 5.98) {\scalebox{0.45}{\textbf{0.3}}}; % Third number (on bottom)
            \node at (0.12, 5.79) {\scalebox{0.45}{\textbf{5.7}}}; % First number (on top)
            \node at (0.12, 4.89) {\scalebox{0.45}{\textbf{3.0}}}; % Second number (in the middle)
            \node at (0.12, 3.99) {\scalebox{0.45}{\textbf{0.3}}}; % Third number (on bottom)
            \node at (0.12, 3.82) {\scalebox{0.45}{\textbf{5.7}}}; % First number (on top)
            \node at (0.12, 2.92) {\scalebox{0.45}{\textbf{3.0}}}; % Second number (in the middle)
            \node at (0.12, 2.02) {\scalebox{0.45}{\textbf{0.3}}}; % Third number (on bottom)
            \node at (0.12, 1.85) {\scalebox{0.45}{\textbf{5.7}}}; % First number (on top)
            \node at (0.12, 0.95) {\scalebox{0.45}{\textbf{3.0}}}; % Second number (in the middle)
            \node at (0.12, 0.05) {\scalebox{0.45}{\textbf{0.3}}}; % Third number (on bottom)
        \end{scope}        
    \end{tikzpicture}}
\caption{
Snapshots of the surface height contour for 2D thin-film equation \eqref{eq:model} at different times.
First row: numerical solutions for FEM \eqref{FEM}, $\Delta t=0.0001$; Second row: numerical solutions for the approximated JKO scheme \eqref{saddleH-JKOa}
with the Newton-Raphson solver for \eqref{newton} in each JKO step, $\Delta t=0.0001$; 
Third row: numerical solutions for the approximated JKO scheme \eqref{saddleH-JKOa}
with the Newton-Raphson solver for \eqref{newton} in each JKO step, $\Delta t=0.001$; Last row: numerical solutions for the approximated JKO scheme \eqref{saddleH-JKOa}
with PDHG solver in Algorithm \ref{alg:1} for each JKO step, $\Delta t=0.001$, $tol = 10^{-8}$.
}
\label{fig:jko2D}
\end{figure}
We consider the computational domain as a periodic unit square $\Omega=[0,1]^2$. The initial condition is chosen as $h_0(x,y)=1+0.2\cos(2\pi x)\cos(2\pi y)$, and the terminal time is $T=0.4$.
The approximated JKO scheme \eqref{saddleH-JKOa} is applied on a uniform rectangular mesh with $32\times 32$ elements and polynomial degree $k=3$. 
Similar to the 1D case, we use either the Newton-Raphson method to solve the critical point system \eqref{newton}, or the PDHG Algorithm \ref{alg:1} to solve the the saddle point problem \eqref{saddleH-JKOa} for each JKO step. 
% The PDHG iteration is terminated when the $L_1$-norm of the difference of two consecutive surface heights $h_{h}$ is less than a  tolerance $tol=10^{-8}$.
}

\blue{
We compare the simulation results with the reference solution using the FEM \eqref{FEM} with the same spatial discretization parameters.
For the FEM scheme \eqref{FEM}, we use a small time step size $\Delta t = 0.0001$, and these results are treated as reference solutions. For the Newton-Raphson method, we use both a small time step size $\Delta t = 0.0001$ and a large time step size $\Delta t = 0.001$. For the optimization approach, we set the time step size to be $\Delta t = 0.001$, and terminate the iteration when the $L_1$-norm of the difference of two consecutive surface heights $h_{h}$ is less than a  tolerance $tol=10^{-8}$.}

\blue{Snapshots of surface height contours at different times are shown in Figure~\ref{fig:jko2D}.
We observe a similar pattern for the surface height evolution as in the 1D case.
Driven by the interfacial instabilities described in \eqref{eq:model}, the early stage (see panel (a) of Figure \ref{fig:jko2D}) shows the spatial variations in the solution profile growing until the minimum height approaches $h_{\min} = O(\epsilon)$. 
In the later stage, the minimum height spreads to form dry spots, leading to droplet formation (see panel (b) of Figure \ref{fig:jko2D}), followed by a slow growth in droplet height driven by weak condensation effects (see panels (c-f) of Figure \ref{fig:jko2D}). This example captures the morphological changes previously observed in 1D dewetting thin-film dynamics with weak non-mass-conserving effects \citep{ji2018instability}.
Moreover, results for all four simulations are qualitatively similar to each other. In particular, the results for the first two rows with the small time step size $\Delta t=0.0001$ are almost identical to each other, validating the proposed approximated JKO scheme \eqref{saddleH-JKOa} in the 2D setting.
The results for the last two rows are consistent with each other, which indicates both the Newton-Raphson and the PDHG optimization approaches are able to solve the discrete saddle-point problem \eqref{saddleH-JKOa} accurately.}

\subsection{MFC for droplet dynamics.}
\label{ex2}
\blue{Next, we demonstrate that the developed MFC system can drastically control droplet shapes, motions, and drive morphological changes in droplet configurations.  With different choices of initial and target surface heights, we will present numerical results showcasing the application of the developed MFC scheme for fundamental droplet actuation techniques, including droplet transport, bead-up (i.e., dewetting), spreading, merging, and splitting.
To achieve this, we apply the finite element discretization \eqref{saddleH} and the PDHG Algorithm \ref{alg:1} to solve the full MFC system \blue{in Definition \ref{mfc}}. 
We use the same mobility functions and energy functional \eqref{ppp} as in the previous subsection.
There is additional freedom in choosing the potential and terminal functionals in \eqref{mfcA1} and the scaling constant $\beta$ in \eqref{mfcA2}
to determine the MFC problem \eqref{mfcA}.
Throughout this subsection, we take the terminal time $T=1$ and $\beta=0.01$, and use the following potential and terminal functionals:
\begin{equation}
\mathcal{F}(h) = -0.02\int_0^T\int_{\Omega}h\log(h) dx\,dt,\quad
\mathcal{G}(h) = 0.05\int_{\Omega}h(\log(h/\blue{h_{T}})-1) dx,
\end{equation}
where $\blue{h_T}$ is a given target terminal surface height function. The potential functional $\mathcal{F}(h)$ serves as a regularization term, while the terminal function $\mathcal{G}(h)$ drives the surface height towards the target surface height as 
$h_T=\arg\min_h \mathcal{G}(h)$. The choice of these functional forms for different control constraints is beyond the scope of this study and will serve as an interesting future direction.
}
% and present numerical results for different choices of initial and target surface heights.
% \begin{alignat}{2}
% \label{icbc}
% \begin{cases}
% \text{Case 1}:& h_0(x, y) = \epsilon+
% \frac{10}{3}\left(1-75((x-0.3)^2+(y-0.3)^2)\right)_+, \\
% &h_{\mathrm{trg}}(x,y)=\epsilon+
% \frac{10}{3}\left(1-75((x-0.7)^2+(y-0.7)^2)\right)_+,\\[.4ex]
% \text{Case 2}:& h_0(x, y) = \epsilon+
% \frac{10}{3}\left(1-75((x-0.5)^2+(y-0.5)^2)\right)_+, \\
% &h_{\mathrm{trg}}(x,y)=\epsilon+
% \frac{5}{12}\left(1-\frac{75}{8}((x-0.5)^2+(y-0.5)^2)\right)_+,\\[.4ex]
% \text{Case 3}:& h_0(x, y) = 
% \epsilon+
% \frac{5}{12}\left(1-\frac{75}{8}((x-0.5)^2+(y-0.5)^2)\right)_+,\\
% &h_{\mathrm{trg}}(x,y)= \epsilon+
% \frac{10}{3}\left(1-75((x-0.5)^2+(y-0.5)^2)\right)_+\\[.4ex]
% \text{Case 4}:& h_0(x, y) = 
% \epsilon+
% \frac{5}{12}\left(1-\frac{75}{8}((x-0.5)^2+(y-0.5)^2)\right)_+,\\
% &h_{\mathrm{trg}}(x,y)= \epsilon+
% \frac{10}{3}\left(1-75((x-0.5)^2+(y-0.5)^2)\right)_+\\[.4ex]
% \text{Case 5}:& h_0(x, y) = 
% \epsilon+
% \frac{5}{12}\left(1-\frac{75}{8}((x-0.5)^2+(y-0.5)^2)\right)_+,\\
% &h_{\mathrm{trg}}(x,y)= \epsilon+
% \frac{10}{3}\left(1-75((x-0.5)^2+(y-0.5)^2)\right)_+
% \end{cases}
% \end{alignat}

\blue{In the finite element discretization \eqref{saddleH}, 
we use a uniform spatial rectangular mesh of size $64\times 64$, a uniform temporal mesh of size $16$, and take 
polynomial degree \blue{$k=3$}. The total number of degrees of freedom (DOFs)  for the dual variable $\phi_h$
is about 4 million. This choice of discretization parameters ensures enough resolution in the simulation with a manageable computational cost.
A more in-depth mesh resolution study will be carried out elsewhere.
We stop the PDHG iteration when the $L_1$-norm of the difference between the terminal surface heights
in two consecutive iterations 
$\|h_{h,T}^n-h_{h,T}^{n-1}\|_{L_1}$
is less than $10^{-6}$, when a converged solution profile has been reached. 
For all the test cases considered here, the algorithm converged within 2000 iterations. 
As mentioned in the previous subsection, a more in-depth computational study of the optimization solver Algorithm \ref{alg:1} will be carried elsewhere.
}
% The same stopping criteria as the previous example is used for the PDHG Algorithm \ref{alg:1}.

\subsubsection*{Case 1: Droplet transport.}
\begin{figure}
\centering
% \subfigure[t=0.0]{
% \includegraphics[width=0.15\textwidth]{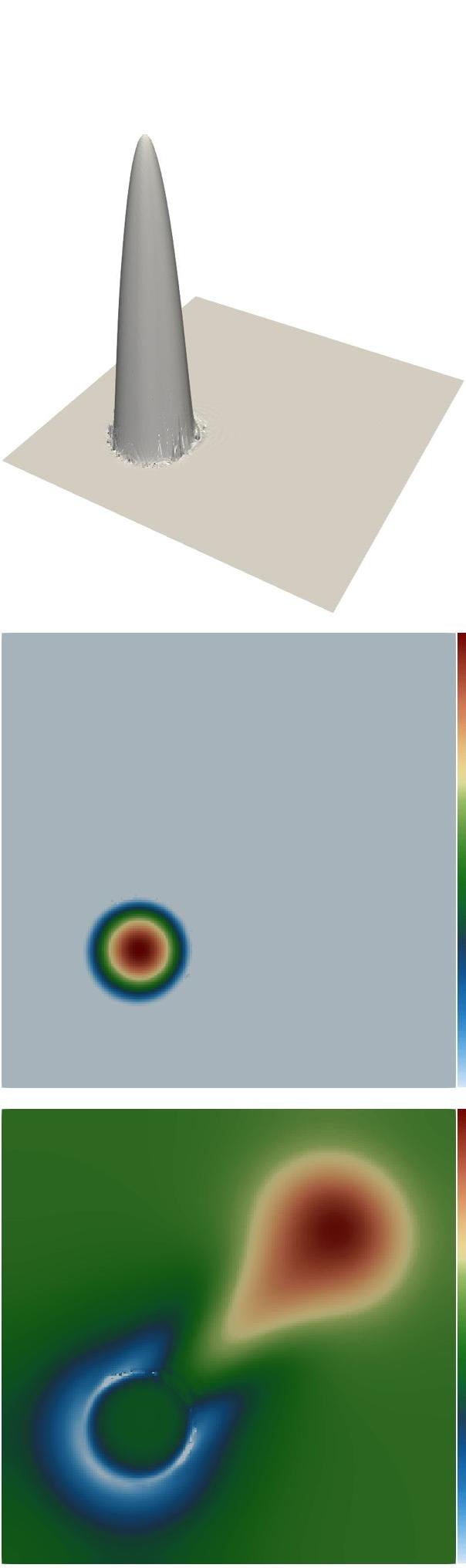}}
% \subfigure[t=0.2]{
% \includegraphics[width=0.15\textwidth]{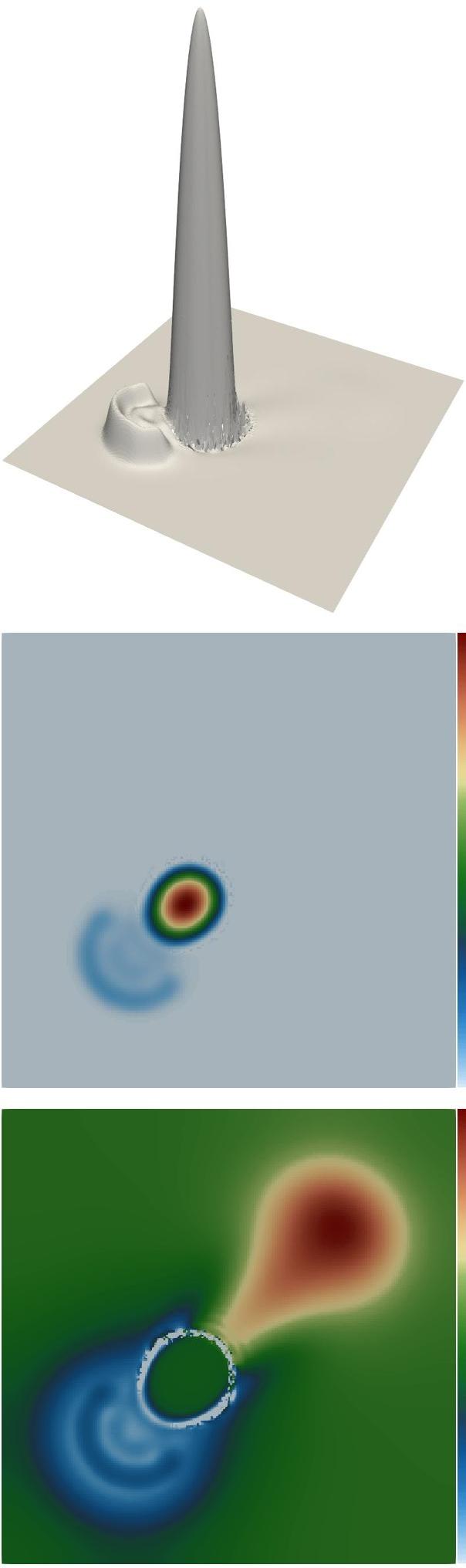}}
% \subfigure[t=0.4]{
% \includegraphics[width=0.15\textwidth]{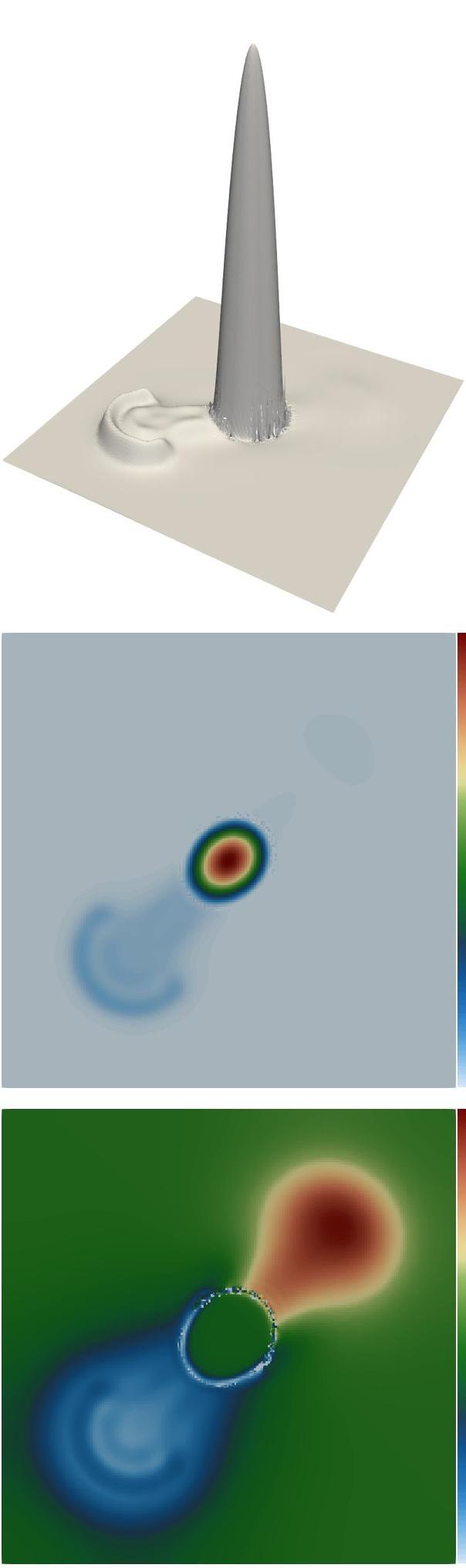}}
% \subfigure[t=0.6]{
% \includegraphics[width=0.15\textwidth]{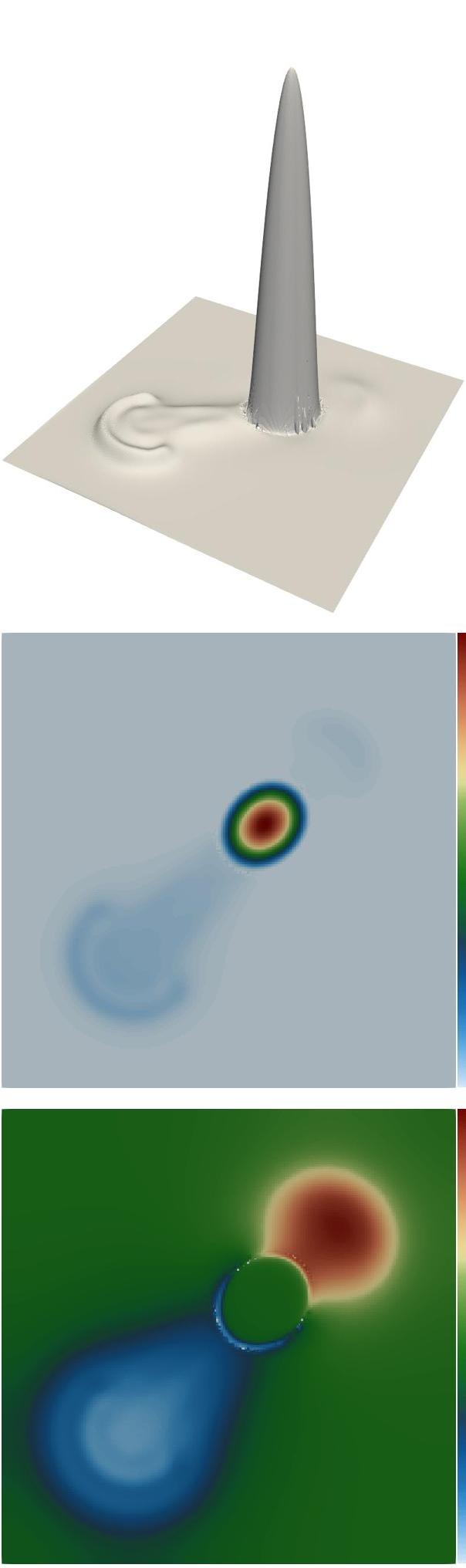}}
% \subfigure[t=0.8]{
% \includegraphics[width=0.15\textwidth]{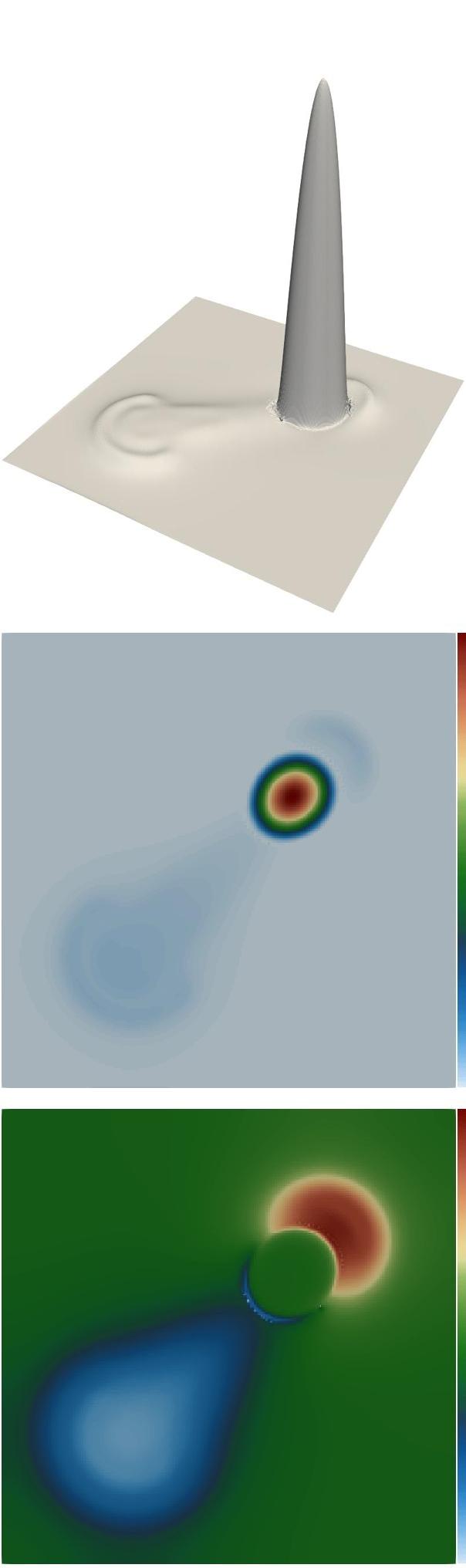}}
% \subfigure[t=1.0]{
% \includegraphics[width=0.15\textwidth]{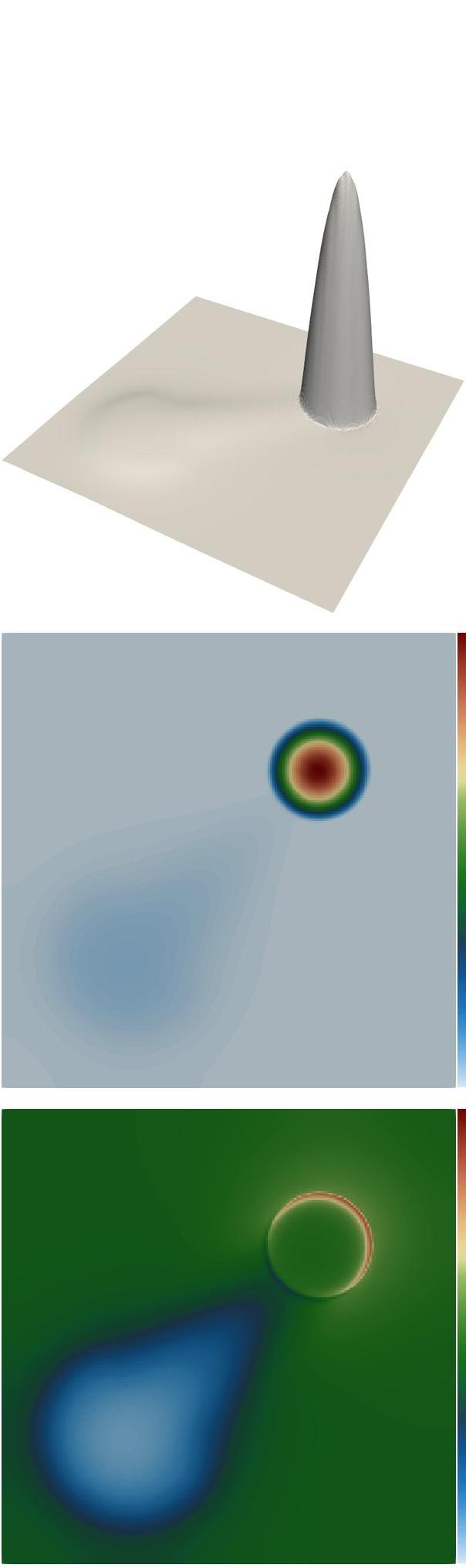}}
\subfigure[t=0.0]{
              \begin{tikzpicture}
        \node [anchor=south west,inner sep=0] (image) at (0,0) {\includegraphics[width=0.14\textwidth]{figs/zaC32T0h.jpg}};  % Adjust the width as needed
                \begin{scope}[shift={(image.south east)}]
            % Adjust positioning of numbers vertically and horizontally
            \node at (0.15, 3.8) {\scalebox{0.45}{\textbf{3.72}}}; % First number (on top)
            \node at (0.15, 2.9) {\scalebox{0.45}{\textbf{2.01}}}; % Second number (in the middle)
            \node at (0.15, 2.0) {\scalebox{0.45}{\textbf{0.30}}}; % Third number (on bottom)
            \node at (0.15, 1.85) {\scalebox{0.45}{\textbf{0.06}}}; % First number (on top)
            \node at (0.15, 0.9) {\scalebox{0.45}{\textbf{0.00}}}; % Second number (in the middle)
            \node at (0.14, 0.05) {\scalebox{0.45}{\textbf{-0.05}}}; % Third number (on bottom)
        \end{scope}        
    \end{tikzpicture}}
        \hspace{-0.43cm}
        \subfigure[t=0.2]{
              \begin{tikzpicture}
        \node [anchor=south west,inner sep=0] (image) at (0,0) {\includegraphics[width=0.14\textwidth]{figs/zaC32T2h.jpg}};  % Adjust the width as needed
        % Get the coordinates of the image and add the vertical numbers to the right
                \begin{scope}[shift={(image.south east)}]
            % Adjust positioning of numbers vertically and horizontally
            \node at (0.15, 3.8) {\scalebox{0.45}{\textbf{4.76}}}; % First number (on top)
            \node at (0.15, 2.9) {\scalebox{0.45}{\textbf{2.53}}}; % Second number (in the middle)
            \node at (0.15, 2.0) {\scalebox{0.45}{\textbf{0.30}}}; % Third number (on bottom)
            \node at (0.15, 1.85) {\scalebox{0.45}{\textbf{0.06}}}; % First number (on top)
            \node at (0.15, 0.9) {\scalebox{0.45}{\textbf{0.00}}}; % Second number (in the middle)
            \node at (0.14, 0.05) {\scalebox{0.45}{\textbf{-0.05}}}; % Third number (on bottom)
        \end{scope}        
    \end{tikzpicture}}
        \hspace{-0.43cm}
        \subfigure[t=0.4]{
              \begin{tikzpicture}
        \node [anchor=south west,inner sep=0] (image) at (0,0) {\includegraphics[width=0.14\textwidth]{figs/zaC32T4h.jpg}};  % Adjust the width as needed
                \begin{scope}[shift={(image.south east)}]
            % Adjust positioning of numbers vertically and horizontally
            \node at (0.15, 3.8) {\scalebox{0.45}{\textbf{4.47}}}; % First number (on top)
            \node at (0.15, 2.9) {\scalebox{0.45}{\textbf{2.38}}}; % Second number (in the middle)
            \node at (0.15, 2.0) {\scalebox{0.45}{\textbf{0.30}}}; % Third number (on bottom)
            \node at (0.15, 1.85) {\scalebox{0.45}{\textbf{0.06}}}; % First number (on top)
            \node at (0.15, 0.9) {\scalebox{0.45}{\textbf{0.00}}}; % Second number (in the middle)
            \node at (0.14, 0.05) {\scalebox{0.45}{\textbf{-0.05}}}; % Third number (on bottom)
        \end{scope}        
    \end{tikzpicture}}
        \hspace{-0.43cm}
\subfigure[t=0.6]{
              \begin{tikzpicture}
        \node [anchor=south west,inner sep=0] (image) at (0,0) {\includegraphics[width=0.14\textwidth]{figs/zaC32T6h.jpg}};  % Adjust the width as needed
                        \begin{scope}[shift={(image.south east)}]
            % Adjust positioning of numbers vertically and horizontally
                        % Adjust positioning of numbers vertically and horizontally
            \node at (0.15, 3.8) {\scalebox{0.45}{\textbf{4.19}}}; % First number (on top)
            \node at (0.15, 2.9) {\scalebox{0.45}{\textbf{2.25}}}; % Second number (in the middle)
            \node at (0.15, 2.0) {\scalebox{0.45}{\textbf{0.30}}}; % Third number (on bottom)
            \node at (0.15, 1.85) {\scalebox{0.45}{\textbf{0.06}}}; % First number (on top)
            \node at (0.15, 0.9) {\scalebox{0.45}{\textbf{0.00}}}; % Second number (in the middle)
            \node at (0.14, 0.05) {\scalebox{0.45}{\textbf{-0.05}}}; % Third number (on bottom)
        \end{scope}        
    \end{tikzpicture}}
        \hspace{-0.43cm}
    \subfigure[t=0.8]{
              \begin{tikzpicture}
        \node [anchor=south west,inner sep=0] (image) at (0,0) {\includegraphics[width=0.14\textwidth]{figs/zaC32T8h.jpg}};  % Adjust the width as needed
                \begin{scope}[shift={(image.south east)}]
            % Adjust positioning of numbers vertically and horizontally
            \node at (0.15, 3.8) {\scalebox{0.45}{\textbf{4.06}}}; % First number (on top)
            \node at (0.15, 2.9) {\scalebox{0.45}{\textbf{2.18}}}; % Second number (in the middle)
            \node at (0.15, 2.0) {\scalebox{0.45}{\textbf{0.30}}}; % Third number (on bottom)
            \node at (0.15, 1.85) {\scalebox{0.45}{\textbf{0.06}}}; % First number (on top)
            \node at (0.15, 0.9) {\scalebox{0.45}{\textbf{0.00}}}; % Second number (in the middle)
            \node at (0.14, 0.05) {\scalebox{0.45}{\textbf{-0.05}}}; % Third number (on bottom)
        \end{scope}        
    \end{tikzpicture}}
        \hspace{-0.43cm}
\subfigure[t=1.0]{
              \begin{tikzpicture}
        \node [anchor=south west,inner sep=0] (image) at (0,0) {\includegraphics[width=0.14\textwidth]{figs/zaC32T10h.jpg}};  % Adjust the width as needed
                \begin{scope}[shift={(image.south east)}]
            % Adjust positioning of numbers vertically and horizontally
            \node at (0.15, 3.8) {\scalebox{0.45}{\textbf{3.05}}}; % First number (on top)
            \node at (0.15, 2.9) {\scalebox{0.45}{\textbf{1.68}}}; % Second number (in the middle)
            \node at (0.15, 2.0) {\scalebox{0.45}{\textbf{0.30}}}; % Third number (on bottom)
            \node at (0.15, 1.85) {\scalebox{0.45}{\textbf{0.06}}}; % First number (on top)
            \node at (0.15, 0.9) {\scalebox{0.45}{\textbf{0.00}}}; % Second number (in the middle)
            \node at (0.14, 0.05) {\scalebox{0.45}{\textbf{-0.05}}}; % Third number (on bottom)
        \end{scope}        
    \end{tikzpicture}}
\caption{
% Example \ref{ex2}: 
\blue{
Case 1 (Droplet transport):
Snapshots of (top row) 3D plots of the controlled surface height $h$, (middle row) contour plots of the controlled surface height, and (bottom row) contour plots of the activity field $\zeta$ at different times. The corresponding animation video can be found in the GitHub repository \citep{github}.
% Top row: snapshots of 3D plots for the controlled surface height $h$ at different times;
% Middle row: snapshots of the controlled surface height contours at different times;
% Bottom row: snapshots of the activity field $\zeta$ at different times.
}
}
\label{fig:T32a}
\end{figure}
Droplet transport is one of the most important operations in digital microfluidics (DMF) and has been extensively investigated through experimental approaches such as electro-dewetting \citep{li2019ionic}. 
We present results for the MFC problem \blue{\eqref{mfcA}} with the 
initial and target surface heights specified as
\begin{align}
\label{init-D}
h_0(x, y) =&\; \epsilon+
\frac{10}{3}\left(1-75((x-0.3)^2+(y-0.3)^2)\right)_+,\\
\blue{h_{T}(x, y)} =&\; \epsilon+
\frac{10}{3}\left(1-75((x-0.7)^2+(y-0.7)^2)\right)_+.  
\end{align}
Here $(f)_+ = \begin{cases}
    f &\quad \text{if } f\ge0,\\[.2ex]
    0 &\quad \text{otherwise}
\end{cases}$
is the positive part of a function $f$. 
This example models the MFC of an initial parabolic droplet centered at $(0.3, 0.3)$
moving towards a target parabolic droplet centered at $(0.7,0.7)$. 
\blue{Figure~\ref{fig:T32a} shows the evolution of the controlled surface height at different times, alongside the activity field 
$\zeta_h=\frac{\beta(U'(h_h)+p_h)+\phi_h}{h_h}$ in \eqref{zeta}. 
The snapshots in Figure~\ref{fig:T32a} reveal that the droplet quickly breaks the symmetry, beads up, and develops a larger advancing contact angle, leaving a capillary wave as it progresses toward the target position. 
The 3D contour plots in the first row of Figure~\ref{fig:T32a} are reconstructed using piecewise polynomial interpolation of the discrete surface height data in the space-time quadrature space $h_h\in W_h^k$. Specifically,  a third order polynomial interpolation ($k=3$) is employed for the surface reconstruction within each space-time cell, which is then used to evaluate the surface height at specific times.
Due to the non-smoothness of the initial profile \eqref{init-D}, visible oscillations in the reconstructed surface height are observed near the contact line (where strong gradients exist)  at the initial time $t=0$. These oscillations are localized near the contact lines and diminish as time progresses due to the instantaneous smoothing effect of the nonlinear diffusion terms in the MFC problem.
% indicating that the MFC problem exhibits a smoothing effect.
Additionally, the 2D contour plots in the second and third rows of Figure~\ref{fig:T32a} are reconstructed from piecewise constant 
interpolation of the data on a refined $256\times 256$ mesh. Similar to the 3D plots, localized oscillations near the contact lines are observed, which diminish over time. The surface height remains positive throughout the simulation, achieving a smooth profile by the final time $t=1$. Furthermore, the activity field tends to be contractile ($\zeta > 0$) near the target droplet profile and extensile ($\zeta < 0$) near the initial profile, as the active stress competes with passive interfacial forces and pushes the droplet towards the terminal location.
}

\subsubsection*{Case 2: Droplet spreading.}
Controlling the deformation of a droplet is also a crucial aspect of liquid-handling technology. For instance, 
in typical electro-wetting and electro-dewetting experiments, an electric field can induce changes in the contact angles of a slender droplet containing a dilute surfactant \citep{nelson2012droplet}. Here, we demonstrate the MFC mechanism to control the spreading and bead-up of droplets. 

For the droplet spreading example, we take the initial and target surface heights as 
\begin{align}
h_0(x, y) =&\; \epsilon+
\frac{10}{3}\left(1-75((x-0.5)^2+(y-0.5)^2)\right)_+,\\
\blue{h_{T}(x, y)} =&\; \epsilon+
\frac{5}{12}\left(1-\frac{75}{8}((x-0.5)^2+(y-0.5)^2)\right)_+.  
\end{align}
This case models the MFC of an initial parabolic droplet centered at $(0.5, 0.5)$ with half-width $w=\frac{1}{5\sqrt{3}}$
flattening towards the target droplet with half-width $w=\frac{2\sqrt{2}}{5\sqrt{3}}$ . The initial and target droplets have the same total mass.
Snapshots of the simulation results for the scheme \eqref{saddleH} are presented in Figures~\ref{fig:T33a}. The numerical results indicate that the controlled droplet initially evolves into a pancake shape and then gradually converges to the target droplet profile over time. \blue{The radial symmetry in the droplet profile is preserved during the evolution. Compared with the previous droplet transport test case, visible oscillations are only observed at initial time $t=0$, as shown in the top left panel of Figure~\ref{fig:T32a}. In this test case, the activity field remains radially symmetric and contractile ($\zeta > 0$)
% positive
throughout the simulation, reaching its maximum value near the droplet's contact lines at each time.
}

\begin{figure}
\centering
% \subfigure[t=0.0]{
% \includegraphics[width=0.15\textwidth]{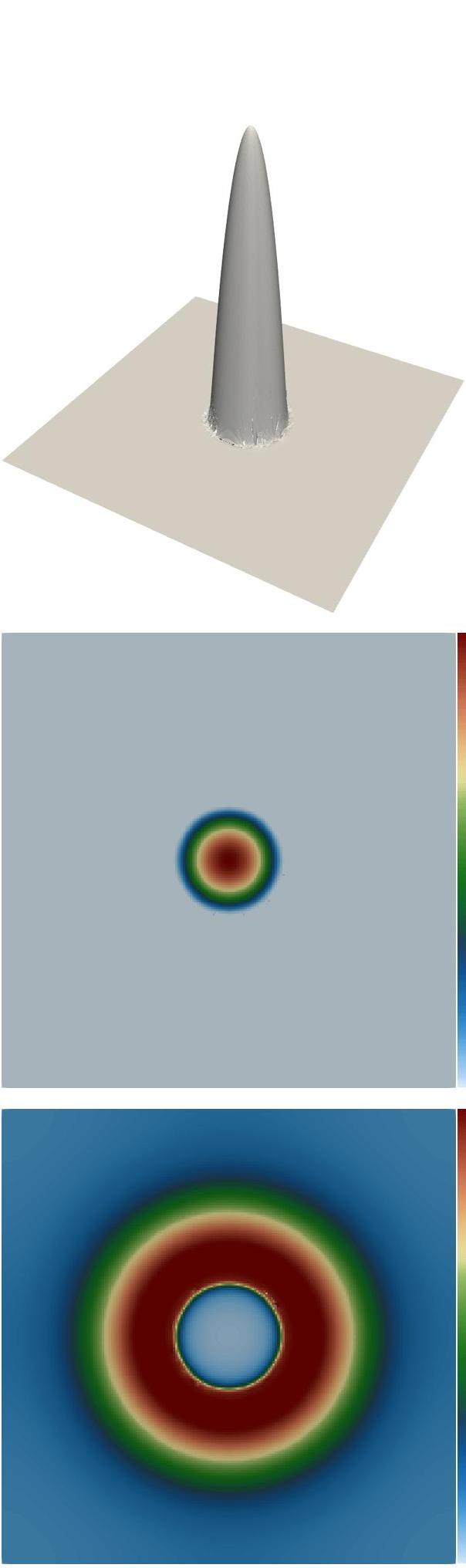}}
% \subfigure[t=0.2]{
% \includegraphics[width=0.15\textwidth]{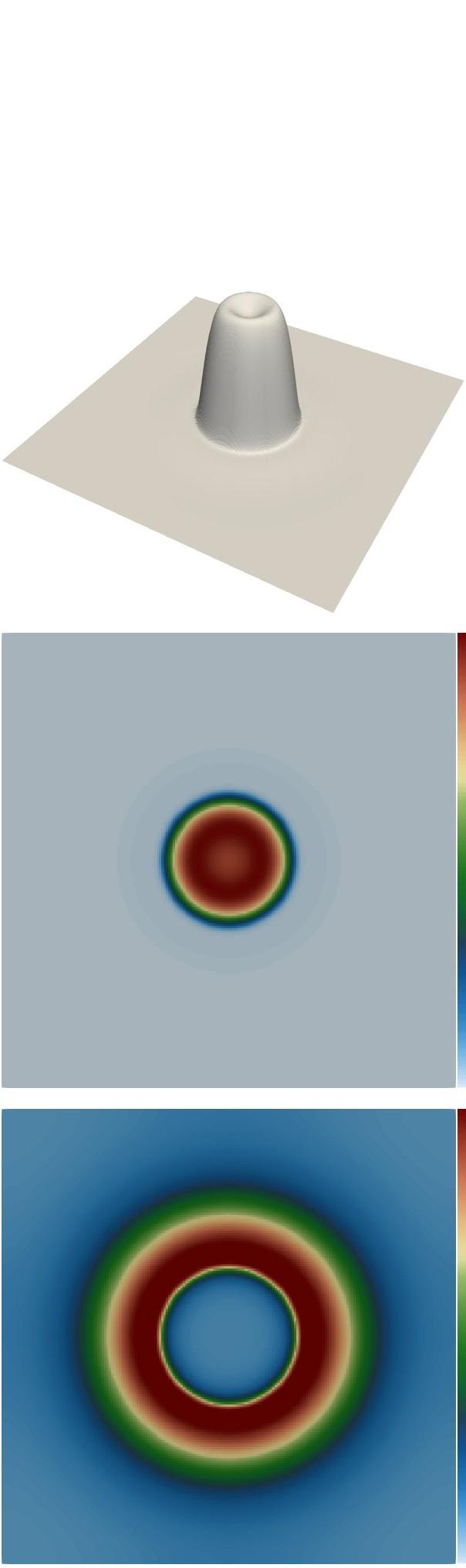}}
% \subfigure[t=0.4]{
% \includegraphics[width=0.15\textwidth]{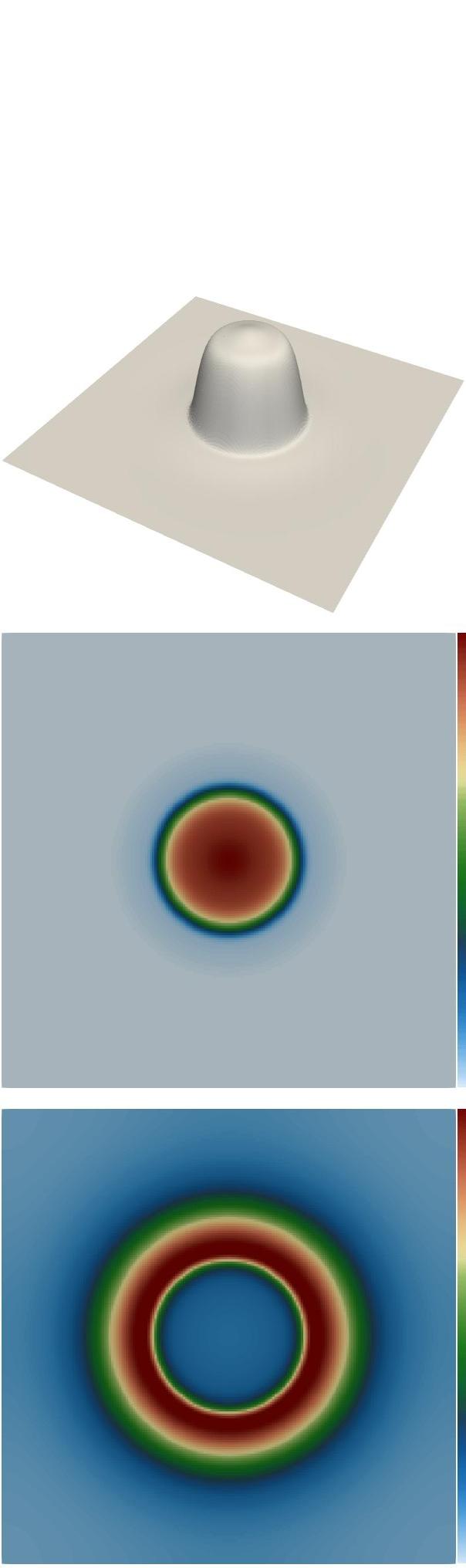}}
% \subfigure[t=0.6]{
% \includegraphics[width=0.15\textwidth]{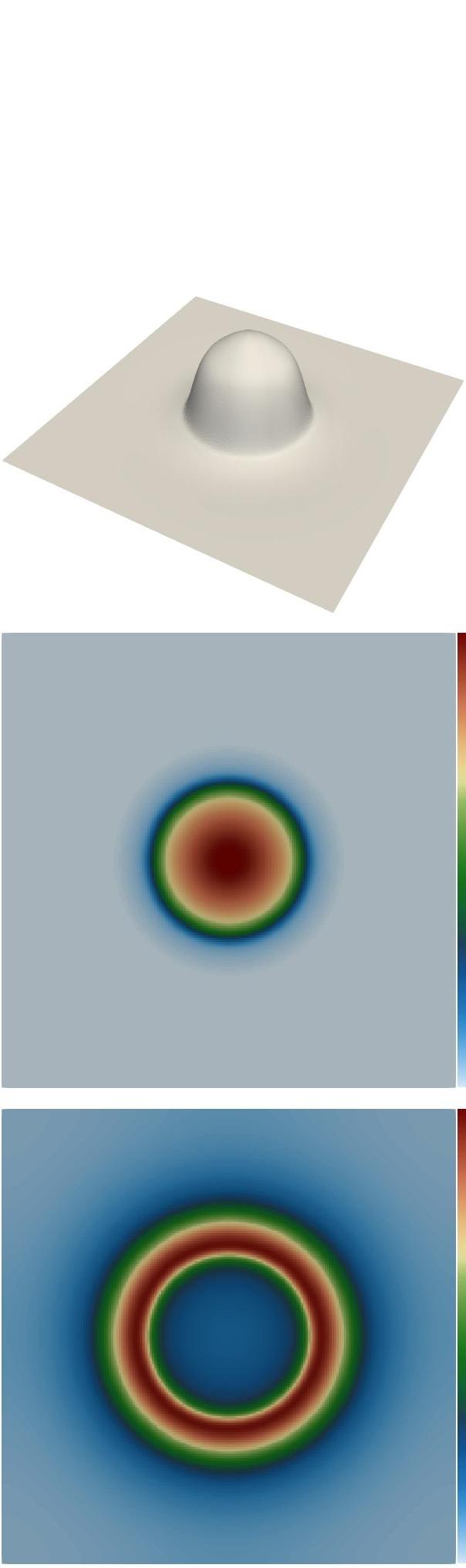}}
% \subfigure[t=0.8]{
% \includegraphics[width=0.15\textwidth]{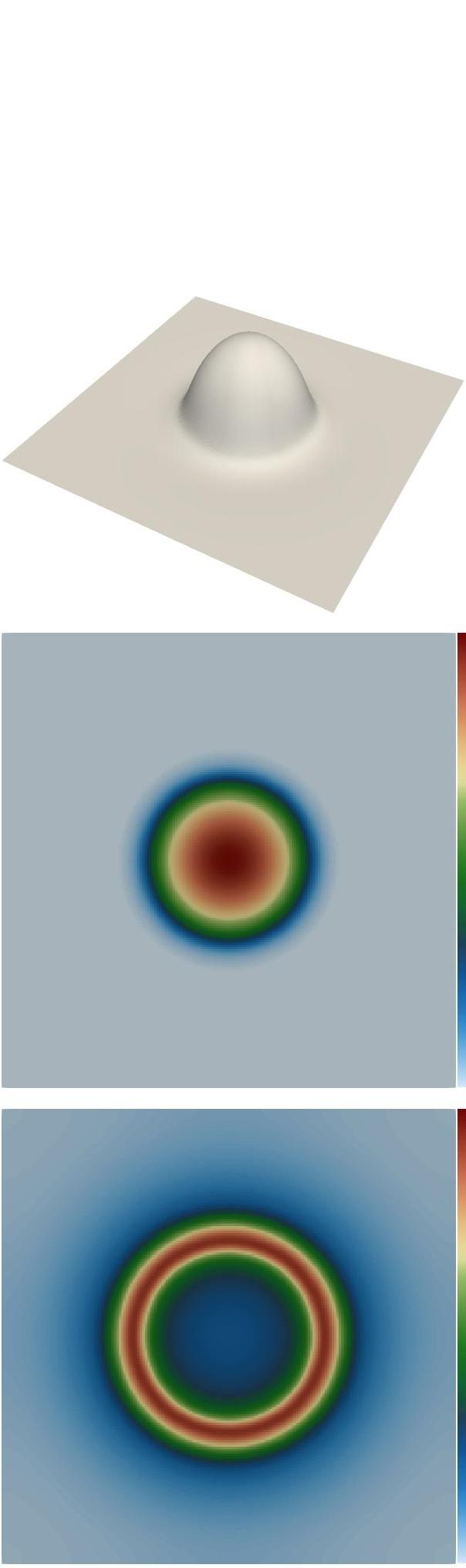}}
% \subfigure[t=1.0]{
% \includegraphics[width=0.15\textwidth]{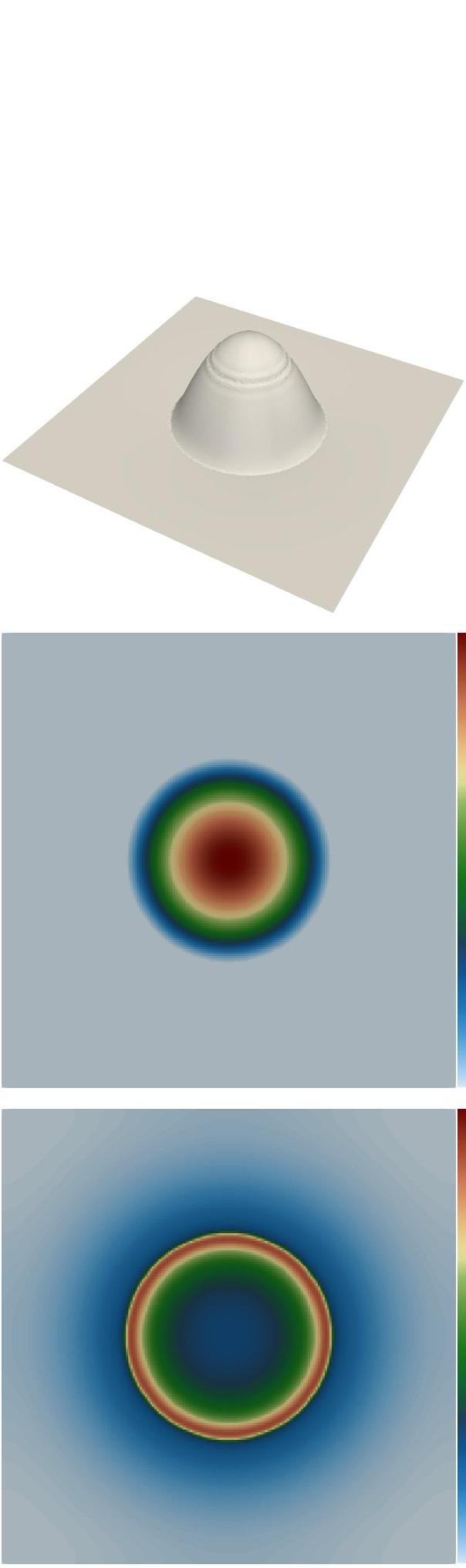}}
\subfigure[t=0.0]{
              \begin{tikzpicture}
        \node [anchor=south west,inner sep=0] (image) at (0,0) {\includegraphics[width=0.14\textwidth]{figs/zaC33T0h.jpg}};  % Adjust the width as needed
                \begin{scope}[shift={(image.south east)}]
            % Adjust positioning of numbers vertically and horizontally
            \node at (0.15, 3.8) {\scalebox{0.45}{\textbf{3.52}}}; % First number (on top)
            \node at (0.15, 2.9) {\scalebox{0.45}{\textbf{1.91}}}; % Second number (in the middle)
            \node at (0.15, 2.0) {\scalebox{0.45}{\textbf{0.30}}}; % Third number (on bottom)
            \node at (0.15, 1.85) {\scalebox{0.45}{\textbf{0.06}}}; % First number (on top)
            \node at (0.15, 0.9) {\scalebox{0.45}{\textbf{0.03}}}; % Second number (in the middle)
            \node at (0.15, 0.05) {\scalebox{0.45}{\textbf{0.00}}}; % Third number (on bottom)
        \end{scope}        
    \end{tikzpicture}}
        \hspace{-0.43cm}
        \subfigure[t=0.2]{
              \begin{tikzpicture}
        \node [anchor=south west,inner sep=0] (image) at (0,0) {\includegraphics[width=0.14\textwidth]{figs/zaC33T2h.jpg}};  % Adjust the width as needed
        % Get the coordinates of the image and add the vertical numbers to the right
                \begin{scope}[shift={(image.south east)}]
            % Adjust positioning of numbers vertically and horizontally
            \node at (0.15, 3.8) {\scalebox{0.45}{\textbf{1.65}}}; % First number (on top)
            \node at (0.15, 2.9) {\scalebox{0.45}{\textbf{0.98}}}; % Second number (in the middle)
            \node at (0.15, 2.0) {\scalebox{0.45}{\textbf{0.30}}}; % Third number (on bottom)
            \node at (0.15, 1.85) {\scalebox{0.45}{\textbf{0.06}}}; % First number (on top)
            \node at (0.15, 0.9) {\scalebox{0.45}{\textbf{0.03}}}; % Second number (in the middle)
            \node at (0.15, 0.05) {\scalebox{0.45}{\textbf{0.00}}}; % Third number (on bottom)
        \end{scope}        
    \end{tikzpicture}}
        \hspace{-0.43cm}
        \subfigure[t=0.4]{
              \begin{tikzpicture}
        \node [anchor=south west,inner sep=0] (image) at (0,0) {\includegraphics[width=0.14\textwidth]{figs/zaC33T4h.jpg}};  % Adjust the width as needed
                \begin{scope}[shift={(image.south east)}]
            % Adjust positioning of numbers vertically and horizontally
            \node at (0.15, 3.8) {\scalebox{0.45}{\textbf{1.38}}}; % First number (on top)
            \node at (0.15, 2.9) {\scalebox{0.45}{\textbf{0.84}}}; % Second number (in the middle)
            \node at (0.15, 2.0) {\scalebox{0.45}{\textbf{0.30}}}; % Third number (on bottom)
            \node at (0.15, 1.85) {\scalebox{0.45}{\textbf{0.06}}}; % First number (on top)
            \node at (0.15, 0.9) {\scalebox{0.45}{\textbf{0.03}}}; % Second number (in the middle)
            \node at (0.15, 0.05) {\scalebox{0.45}{\textbf{0.00}}}; % Third number (on bottom)
        \end{scope}        
    \end{tikzpicture}}
        \hspace{-0.43cm}
\subfigure[t=0.6]{
              \begin{tikzpicture}
        \node [anchor=south west,inner sep=0] (image) at (0,0) {\includegraphics[width=0.14\textwidth]{figs/zaC33T6h.jpg}};  % Adjust the width as needed
                        \begin{scope}[shift={(image.south east)}]
            % Adjust positioning of numbers vertically and horizontally
                        % Adjust positioning of numbers vertically and horizontally
            \node at (0.15, 3.8) {\scalebox{0.45}{\textbf{1.30}}}; % First number (on top)
            \node at (0.15, 2.9) {\scalebox{0.45}{\textbf{0.80}}}; % Second number (in the middle)
            \node at (0.15, 2.0) {\scalebox{0.45}{\textbf{0.30}}}; % Third number (on bottom)
            \node at (0.15, 1.85) {\scalebox{0.45}{\textbf{0.06}}}; % First number (on top)
            \node at (0.15, 0.9) {\scalebox{0.45}{\textbf{0.03}}}; % Second number (in the middle)
            \node at (0.15, 0.05) {\scalebox{0.45}{\textbf{0.00}}}; % Third number (on bottom)
        \end{scope}        
    \end{tikzpicture}}
        \hspace{-0.43cm}
    \subfigure[t=0.8]{
              \begin{tikzpicture}
        \node [anchor=south west,inner sep=0] (image) at (0,0) {\includegraphics[width=0.14\textwidth]{figs/zaC33T8h.jpg}};  % Adjust the width as needed
                \begin{scope}[shift={(image.south east)}]
            % Adjust positioning of numbers vertically and horizontally
            \node at (0.15, 3.8) {\scalebox{0.45}{\textbf{1.30}}}; % First number (on top)
            \node at (0.15, 2.9) {\scalebox{0.45}{\textbf{0.80}}}; % Second number (in the middle)
            \node at (0.15, 2.0) {\scalebox{0.45}{\textbf{0.30}}}; % Third number (on bottom)
            \node at (0.15, 1.85) {\scalebox{0.45}{\textbf{0.06}}}; % First number (on top)
            \node at (0.15, 0.9) {\scalebox{0.45}{\textbf{0.03}}}; % Second number (in the middle)
            \node at (0.15, 0.05) {\scalebox{0.45}{\textbf{0.00}}}; % Third number (on bottom)
        \end{scope}        
    \end{tikzpicture}}
        \hspace{-0.43cm}
\subfigure[t=1.0]{
              \begin{tikzpicture}
        \node [anchor=south west,inner sep=0] (image) at (0,0) {\includegraphics[width=0.14\textwidth]{figs/zaC33T10h.jpg}};  % Adjust the width as needed
                \begin{scope}[shift={(image.south east)}]
            % Adjust positioning of numbers vertically and horizontally
            \node at (0.15, 3.8) {\scalebox{0.45}{\textbf{1.30}}}; % First number (on top)
            \node at (0.15, 2.9) {\scalebox{0.45}{\textbf{0.80}}}; % Second number (in the middle)
            \node at (0.15, 2.0) {\scalebox{0.45}{\textbf{0.30}}}; % Third number (on bottom)
            \node at (0.15, 1.85) {\scalebox{0.45}{\textbf{0.06}}}; % First number (on top)
            \node at (0.15, 0.9) {\scalebox{0.45}{\textbf{0.03}}}; % Second number (in the middle)
            \node at (0.15, 0.05) {\scalebox{0.45}{\textbf{0.00}}}; % Third number (on bottom)
        \end{scope}        
    \end{tikzpicture}}
\caption{
% Example \ref{ex2}: 
\blue{
Case 2 (Droplet spreading): Snapshots of (top row) 3D plots of the controlled surface height $h$, (middle row) contour plots of the controlled surface height, and (bottom row) contour plots of the activity field $\zeta$ at different times. The corresponding animation video can be found in the GitHub repository \citep{github}.
% Top row: snapshots of 3D plots for the controlled surface height $h$ at different times;
% Middle row: snapshots of the controlled surface height contours at different times;
% Bottom row: snapshots of the activity field $\zeta$ at different times.
}
}
\label{fig:T33a}
\end{figure}

\subsubsection*{Case 3: Droplet bead-up.}
This represents the reverse process of Case 2, where our objective is to induce droplet bead-up (i.e. dewetting). Therefore, we set the initial and target surface heights as 
\begin{align}
h_0(x, y) =&\; \epsilon+
\frac{5}{12}\left(1-\frac{75}{8}((x-0.5)^2+(y-0.5)^2)\right)_+,\\
\blue{h_{T}(x, y)} =&\; \epsilon+
\frac{10}{3}\left(1-75((x-0.5)^2+(y-0.5)^2)\right)_+,
\end{align}
where the initial and target profiles are reversed compared to the example in Case 2. 
This models the MFC of an initial parabolic droplet centered at $(0.5, 0.5)$ with half-width $w=
\frac{2\sqrt{2}}{5\sqrt{3}}$
which beads up and evolves into the target droplet with half-width $w=\frac{1}{5\sqrt{3}}$.
Snapshots of the simulation results for the scheme \eqref{saddleH} are presented in Figures~\ref{fig:T37a}. 
In this case, we observe more pattern formation during the droplet bead-up process, where capillary waves are generated in the early stage before the droplet reaches the target shape. Compared to the target droplet profile, the obtained profile at the terminal time $t = 1$ has a slightly elevated height near the contact line.
\blue{Moreover, the activity field remains radially symmetric and mostly extensile ($\zeta < 0$)
throughout the simulation, reaching its maximum absolute value near the droplet's contact lines at each time.
}

\begin{figure}
\centering
% \subfigure[t=0.0]{
% \includegraphics[width=0.15\textwidth]{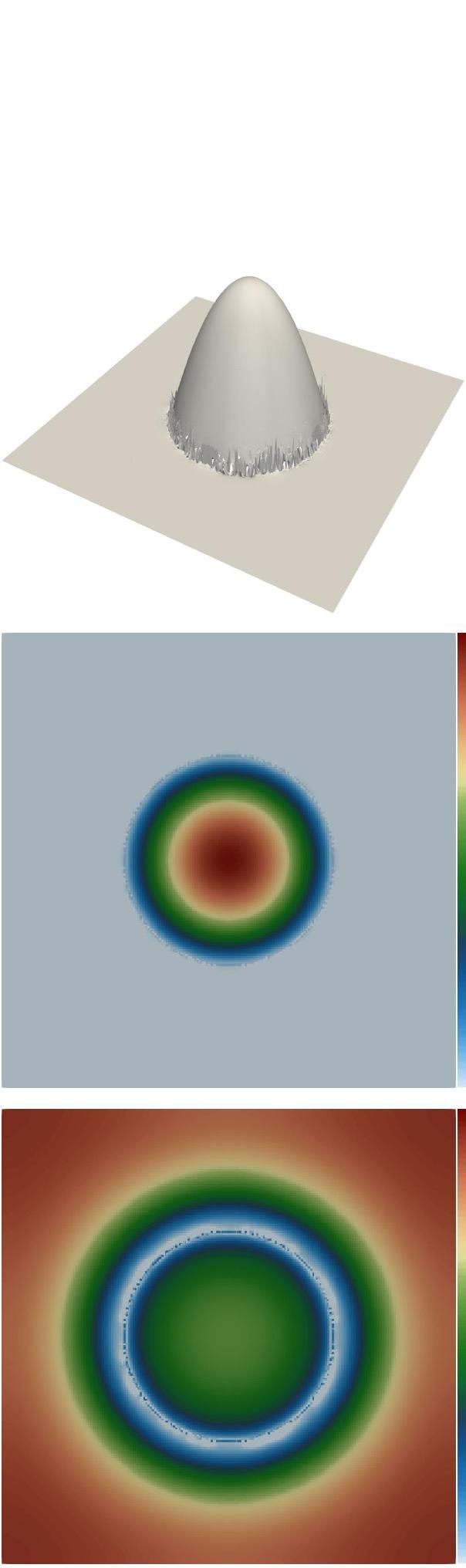}}
% \subfigure[t=0.2]{
% \includegraphics[width=0.15\textwidth]{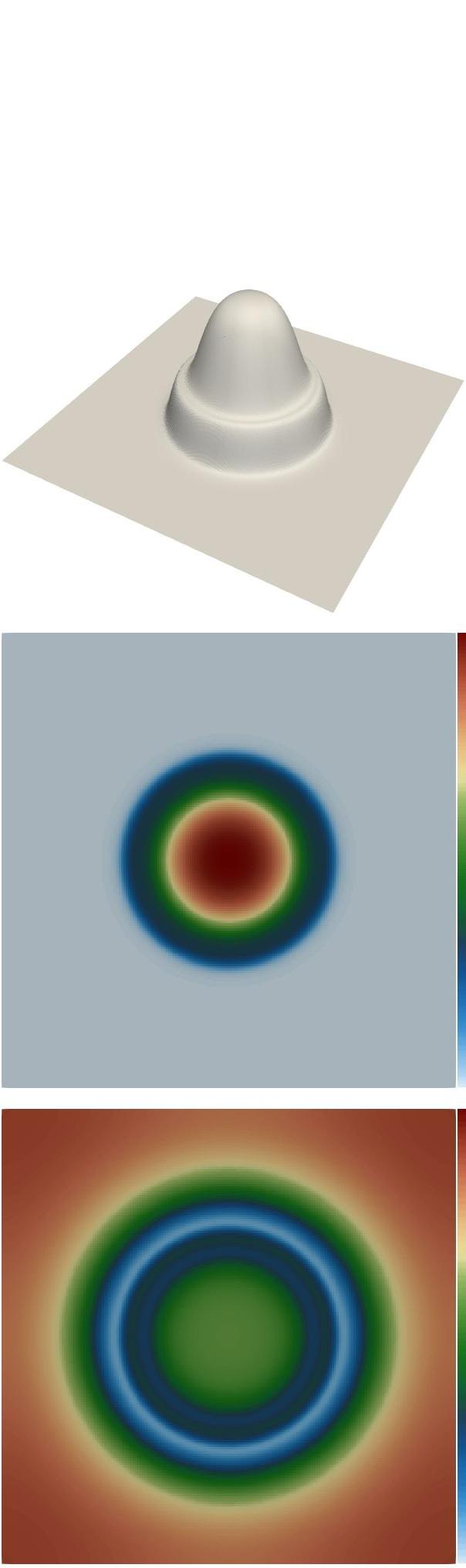}}
% \subfigure[t=0.4]{
% \includegraphics[width=0.15\textwidth]{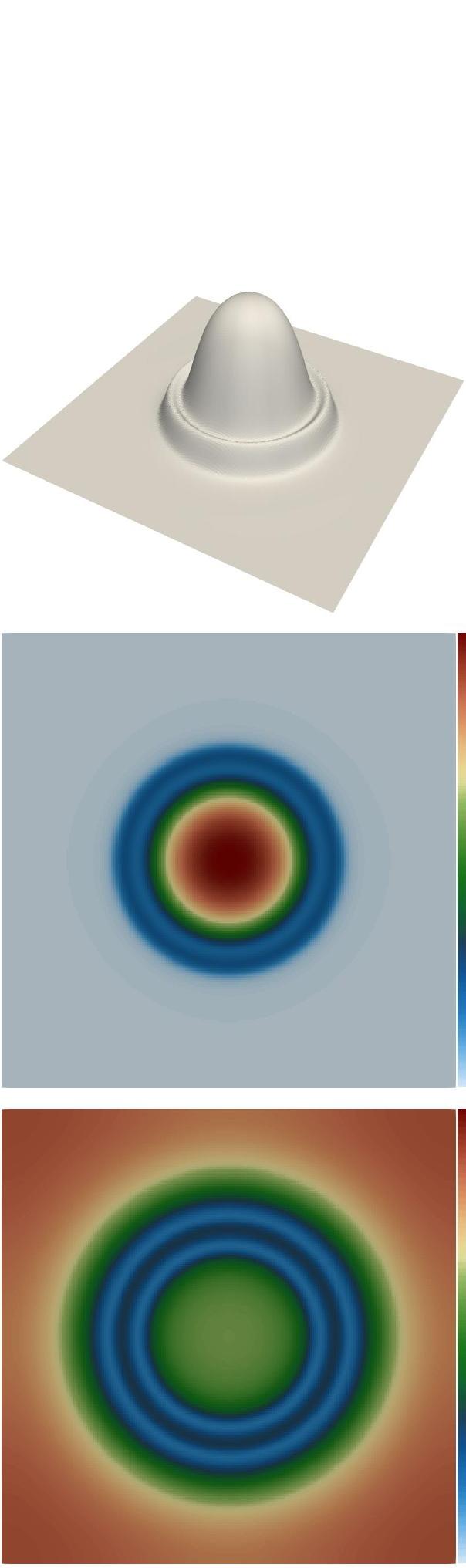}}
% \subfigure[t=0.6]{
% \includegraphics[width=0.15\textwidth]{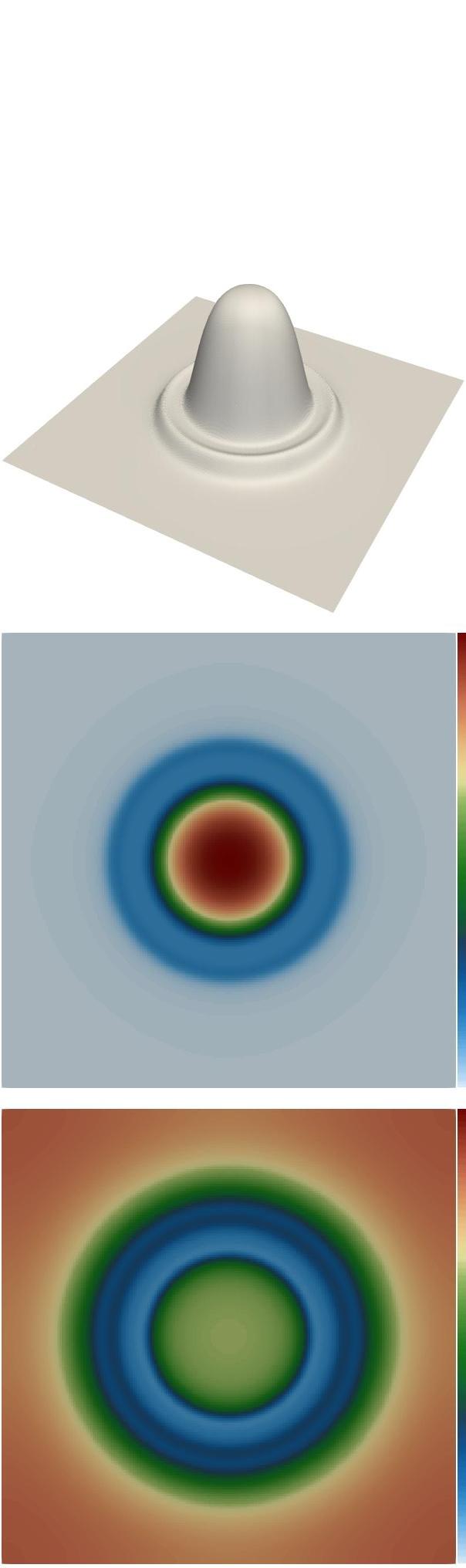}}
% \subfigure[t=0.8]{
% \includegraphics[width=0.15\textwidth]{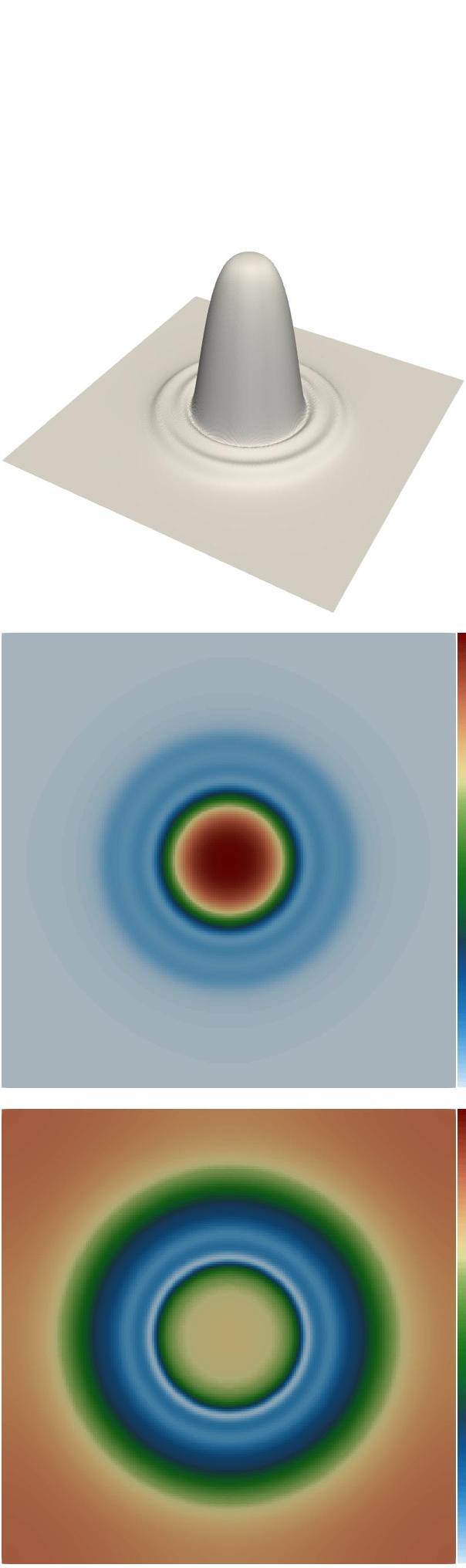}}
% \subfigure[t=1.0]{
% \includegraphics[width=0.15\textwidth]{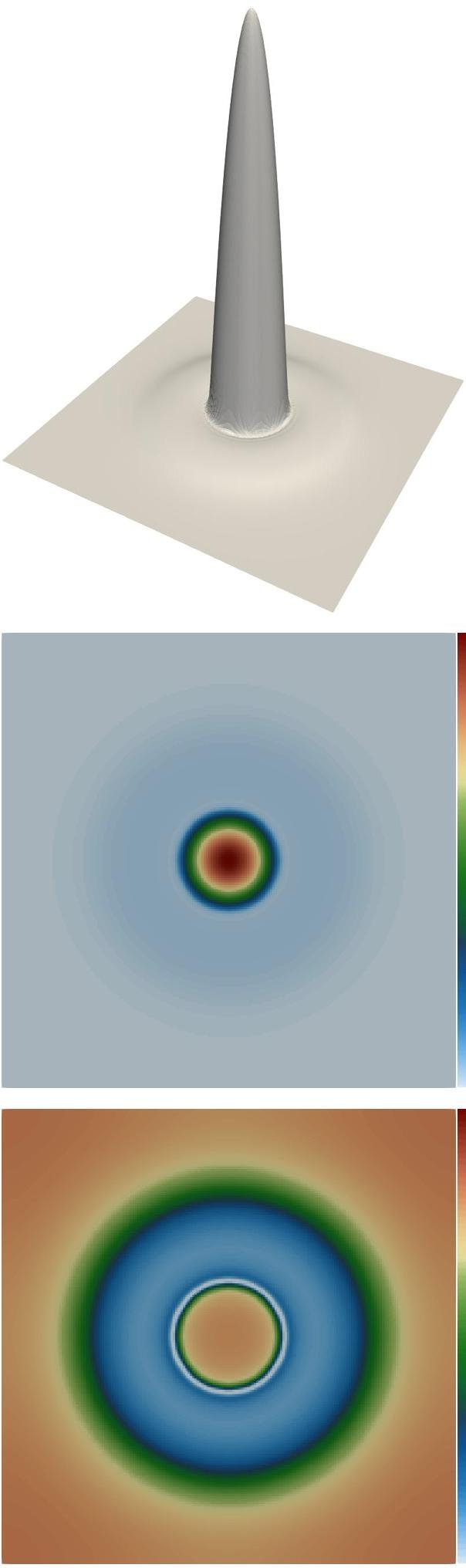}}
\subfigure[t=0.0]{
              \begin{tikzpicture}
        \node [anchor=south west,inner sep=0] (image) at (0,0) {\includegraphics[width=0.14\textwidth]{figs/zaC37T0h.jpg}};  % Adjust the width as needed
                \begin{scope}[shift={(image.south east)}]
            % Adjust positioning of numbers vertically and horizontally
            \node at (0.15, 3.8) {\scalebox{0.45}{\textbf{2.00}}}; % First number (on top)
            \node at (0.15, 2.9) {\scalebox{0.45}{\textbf{1.15}}}; % Second number (in the middle)
            \node at (0.15, 2.0) {\scalebox{0.45}{\textbf{0.30}}}; % Third number (on bottom)
            \node at (0.15, 1.85) {\scalebox{0.45}{\textbf{0.01}}}; % First number (on top)
            \node at (0.15, 0.9) {\scalebox{0.45}{\textbf{-0.02}}}; % Second number (in the middle)
            \node at (0.14, 0.05) {\scalebox{0.45}{\textbf{-0.05}}}; % Third number (on bottom)
        \end{scope}        
    \end{tikzpicture}}
        \hspace{-0.43cm}
        \subfigure[t=0.2]{
              \begin{tikzpicture}
        \node [anchor=south west,inner sep=0] (image) at (0,0) {\includegraphics[width=0.14\textwidth]{figs/zaC37T2h.jpg}};  % Adjust the width as needed
        % Get the coordinates of the image and add the vertical numbers to the right
                \begin{scope}[shift={(image.south east)}]
            % Adjust positioning of numbers vertically and horizontally
            \node at (0.15, 3.8) {\scalebox{0.45}{\textbf{1.77}}}; % First number (on top)
            \node at (0.15, 2.9) {\scalebox{0.45}{\textbf{1.04}}}; % Second number (in the middle)
            \node at (0.15, 2.0) {\scalebox{0.45}{\textbf{0.30}}}; % Third number (on bottom)
            \node at (0.15, 1.85) {\scalebox{0.45}{\textbf{0.01}}}; % First number (on top)
            \node at (0.15, 0.9) {\scalebox{0.45}{\textbf{-0.02}}}; % Second number (in the middle)
            \node at (0.14, 0.05) {\scalebox{0.45}{\textbf{-0.05}}}; % Third number (on bottom)
        \end{scope}        
    \end{tikzpicture}}
        \hspace{-0.43cm}
        \subfigure[t=0.4]{
              \begin{tikzpicture}
        \node [anchor=south west,inner sep=0] (image) at (0,0) {\includegraphics[width=0.14\textwidth]{figs/zaC37T4h.jpg}};  % Adjust the width as needed
                \begin{scope}[shift={(image.south east)}]
            % Adjust positioning of numbers vertically and horizontally
            \node at (0.15, 3.8) {\scalebox{0.45}{\textbf{1.75}}}; % First number (on top)
            \node at (0.15, 2.9) {\scalebox{0.45}{\textbf{1.03}}}; % Second number (in the middle)
            \node at (0.15, 2.0) {\scalebox{0.45}{\textbf{0.30}}}; % Third number (on bottom)
            \node at (0.15, 1.85) {\scalebox{0.45}{\textbf{0.01}}}; % First number (on top)
            \node at (0.15, 0.9) {\scalebox{0.45}{\textbf{-0.02}}}; % Second number (in the middle)
            \node at (0.14, 0.05) {\scalebox{0.45}{\textbf{-0.05}}}; % Third number (on bottom)
        \end{scope}        
    \end{tikzpicture}}
        \hspace{-0.43cm}
\subfigure[t=0.6]{
              \begin{tikzpicture}
        \node [anchor=south west,inner sep=0] (image) at (0,0) {\includegraphics[width=0.14\textwidth]{figs/zaC37T6h.jpg}};  % Adjust the width as needed
                        \begin{scope}[shift={(image.south east)}]
            % Adjust positioning of numbers vertically and horizontally
                        % Adjust positioning of numbers vertically and horizontally
            \node at (0.15, 3.8) {\scalebox{0.45}{\textbf{1.85}}}; % First number (on top)
            \node at (0.15, 2.9) {\scalebox{0.45}{\textbf{1.08}}}; % Second number (in the middle)
            \node at (0.15, 2.0) {\scalebox{0.45}{\textbf{0.30}}}; % Third number (on bottom)
            \node at (0.15, 1.85) {\scalebox{0.45}{\textbf{0.01}}}; % First number (on top)
            \node at (0.15, 0.9) {\scalebox{0.45}{\textbf{-0.02}}}; % Second number (in the middle)
            \node at (0.14, 0.05) {\scalebox{0.45}{\textbf{-0.05}}}; % Third number (on bottom)
        \end{scope}        
    \end{tikzpicture}}
        \hspace{-0.43cm}
    \subfigure[t=0.8]{
              \begin{tikzpicture}
        \node [anchor=south west,inner sep=0] (image) at (0,0) {\includegraphics[width=0.14\textwidth]{figs/zaC37T8h.jpg}};  % Adjust the width as needed
                \begin{scope}[shift={(image.south east)}]
            % Adjust positioning of numbers vertically and horizontally
            \node at (0.15, 3.8) {\scalebox{0.45}{\textbf{2.20}}}; % First number (on top)
            \node at (0.15, 2.9) {\scalebox{0.45}{\textbf{1.25}}}; % Second number (in the middle)
            \node at (0.15, 2.0) {\scalebox{0.45}{\textbf{0.30}}}; % Third number (on bottom)
            \node at (0.15, 1.85) {\scalebox{0.45}{\textbf{0.01}}}; % First number (on top)
            \node at (0.15, 0.9) {\scalebox{0.45}{\textbf{-0.02}}}; % Second number (in the middle)
            \node at (0.14, 0.05) {\scalebox{0.45}{\textbf{-0.05}}}; % Third number (on bottom)
        \end{scope}        
    \end{tikzpicture}}
        \hspace{-0.43cm}
\subfigure[t=1.0]{
              \begin{tikzpicture}
        \node [anchor=south west,inner sep=0] (image) at (0,0) {\includegraphics[width=0.14\textwidth]{figs/zaC37T10h.jpg}};  % Adjust the width as needed
                \begin{scope}[shift={(image.south east)}]
            % Adjust positioning of numbers vertically and horizontally
            \node at (0.15, 3.8) {\scalebox{0.45}{\textbf{4.80}}}; % First number (on top)
            \node at (0.15, 2.9) {\scalebox{0.45}{\textbf{2.55}}}; % Second number (in the middle)
            \node at (0.15, 2.0) {\scalebox{0.45}{\textbf{0.30}}}; % Third number (on bottom)
            \node at (0.15, 1.85) {\scalebox{0.45}{\textbf{0.01}}}; % First number (on top)
            \node at (0.15, 0.9) {\scalebox{0.45}{\textbf{-0.02}}}; % Second number (in the middle)
            \node at (0.14, 0.05) {\scalebox{0.45}{\textbf{-0.05}}}; % Third number (on bottom)
        \end{scope}        
    \end{tikzpicture}}
\caption{
% Example \ref{ex2}: 
\blue{
Case 3 (Droplet bead-up):
Snapshots of (top row) 3D plots of the controlled surface height $h$, (middle row) contour plots of the controlled surface height, and (bottom row) contour plots of the activity field $\zeta$ at different times. The corresponding animation video can be found in the GitHub repository \citep{github}.
% Top row: snapshots of 3D plots for the controlled surface height $h$ at different times;
% Middle row: snapshots of the controlled surface height contours at different times;
% Bottom row: snapshots of the activity field $\zeta$ at different times.
}
}
\label{fig:T37a}
\end{figure}

% \begin{figure}
% \centering
% \subfigure[t=0.0]{
% \includegraphics[width=0.46\textwidth]{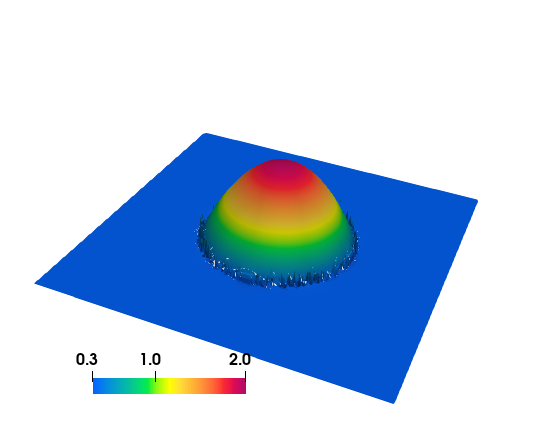}}
% \subfigure[t=0.3]{
% \includegraphics[width=0.46\textwidth]{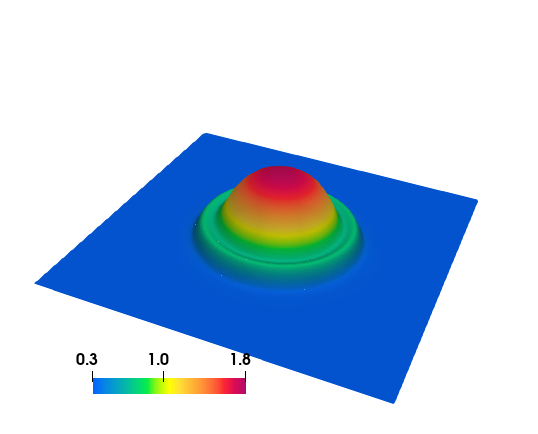}}
% \subfigure[t=0.7]{
% \includegraphics[width=0.46\textwidth]{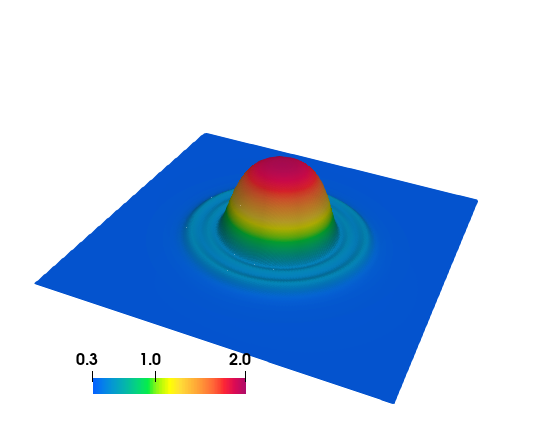}}
% \subfigure[t=1.0]{
% \includegraphics[width=0.46\textwidth]{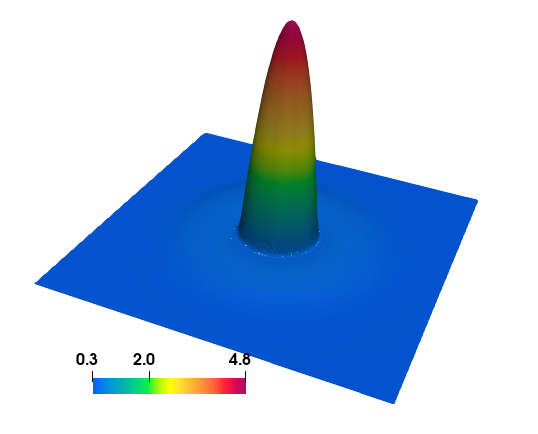}}
% \caption{
% % Example \ref{ex2}: 
% Case 3 (Droplet Bead-up). 
% Snapshots of surface height contour at different times.
% }
% \label{fig:T37a}
% \end{figure}

% \begin{figure}
% \centering
% % \includegraphics[width=0.5\textwidth]{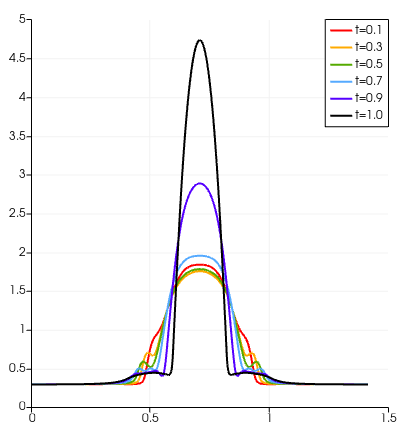}
% \includegraphics[width=0.5\textwidth]{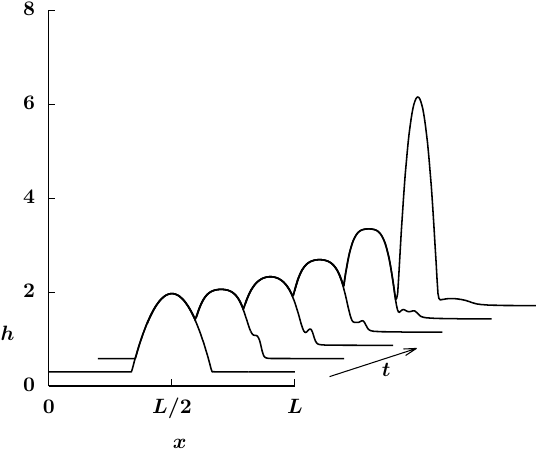}
% \caption{
% % Example \ref{ex2}: 
% Case 3 (Droplet bead-up). 
% Time evolution of the controlled surface height along the north-east diagonal cutline shown in Figure~\ref{fig:T37a}.
% }
% \label{fig:T37b}
% \end{figure}

\subsubsection*{Case 4: Droplet merging.}
Finally, we demonstrate the application of MFC for droplet merging and splitting, which are more complex droplet manipulation techniques widely used in biological and chemical applications \citep{nan2023self}. For the droplet merging case, we control the coalescence of four small droplets initially placed on a two-dimensional domain, where the initial  surface height profile is 
\begin{subequations}
\begin{align}
\label{mg_init}
h_0(x, y) =&\; \epsilon+\sum_{i=1}^{4}\tfrac{5}{6}[1-75((x-x_i)^2+(y-y_i)^2)]_+,
% \\
% \blue{h_{T}(x, y)}  =&\; \epsilon+
% \tfrac{5}{12}[1-\tfrac{75}{8}((x-0.5)^2+(y-0.5)^2)]_+,
\end{align}
where the positions of the peaks of the initial droplets are $(x_1,y_1) = (0.3, 0.3)$, $(x_2,y_2) = (0.3, 0.7)$, $(x_3, y_3) = (0.7,0.3)$, and $(x_4, y_4) = (0.7,0.7)$.

\begin{figure}
\centering
% \subfigure[t=0.0]{
% \includegraphics[width=0.15\textwidth]{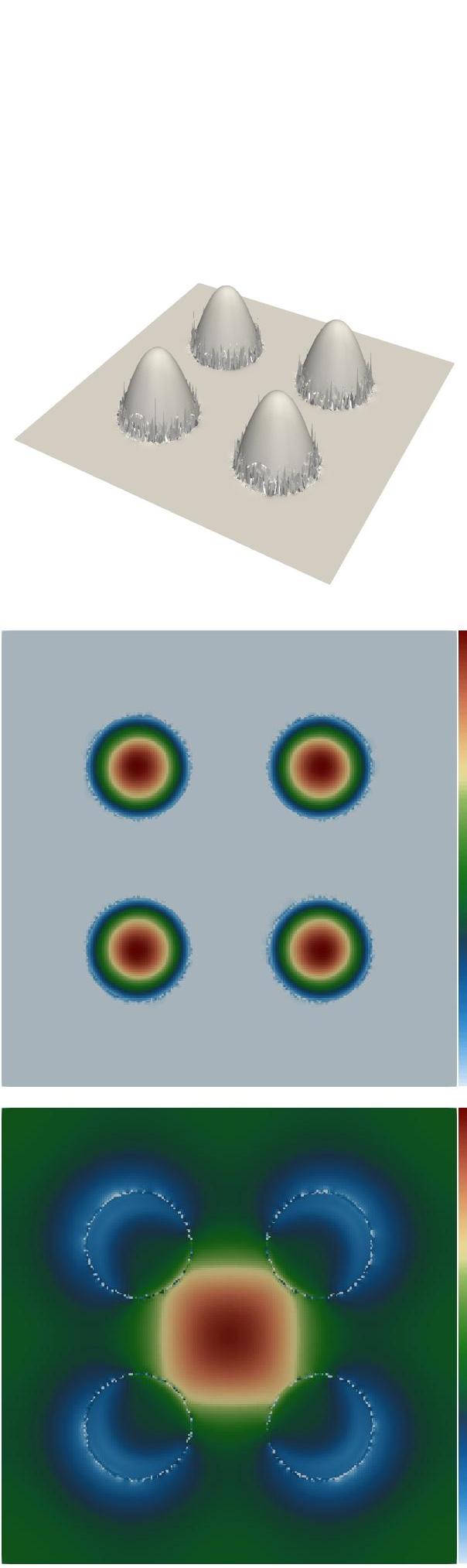}}
% \subfigure[t=0.2]{
% \includegraphics[width=0.15\textwidth]{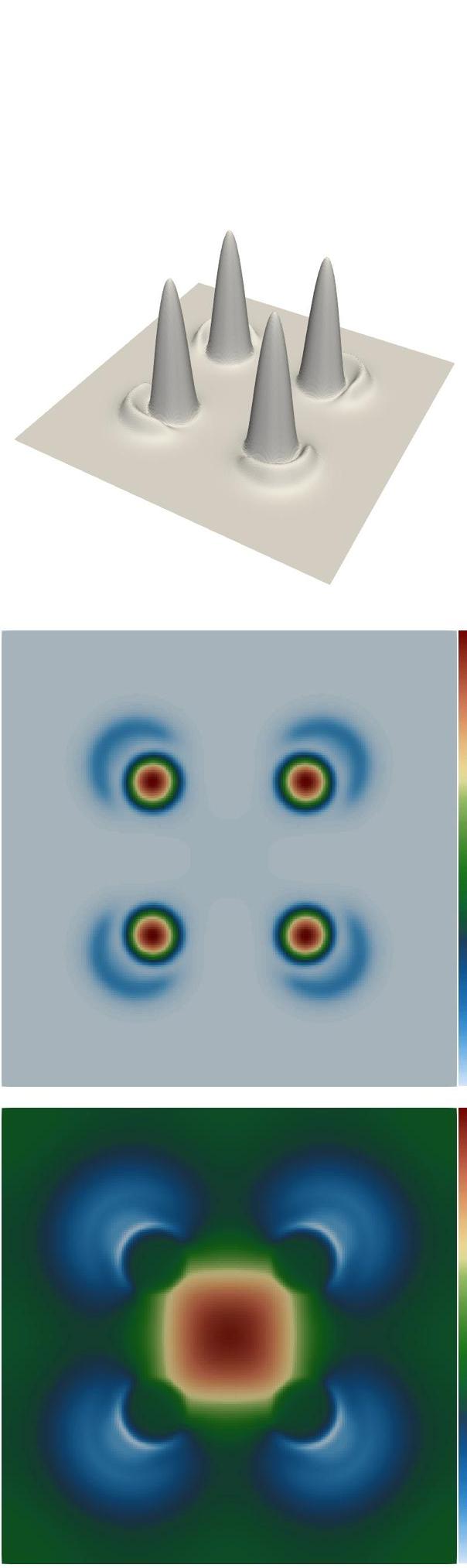}}
% \subfigure[t=0.4]{
% \includegraphics[width=0.15\textwidth]{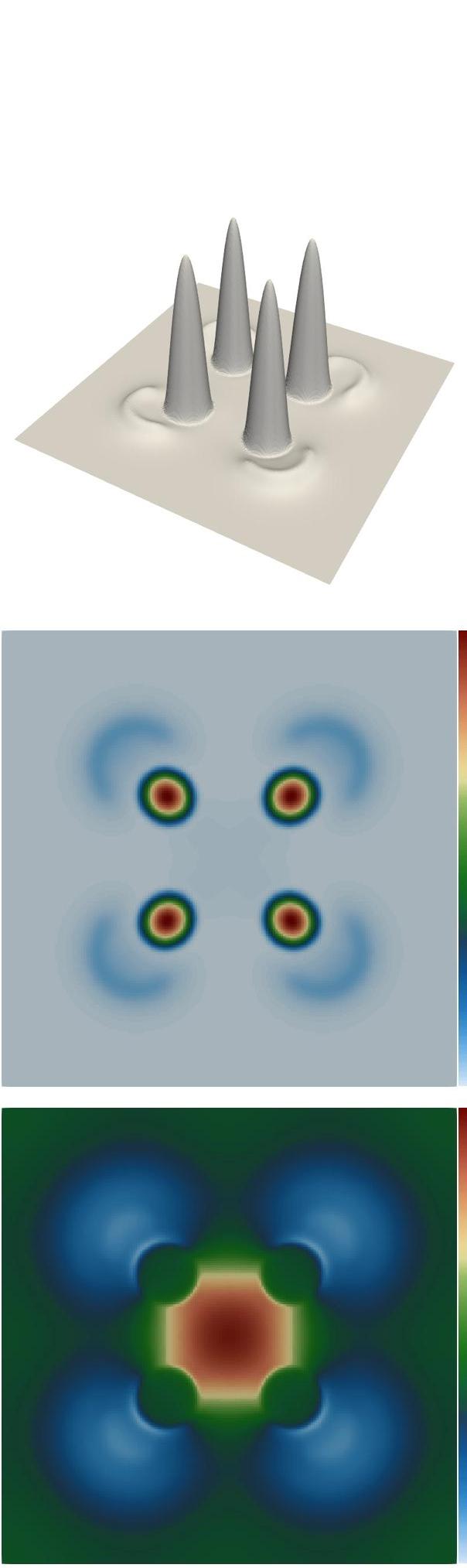}}
% \subfigure[t=0.6]{
% \includegraphics[width=0.15\textwidth]{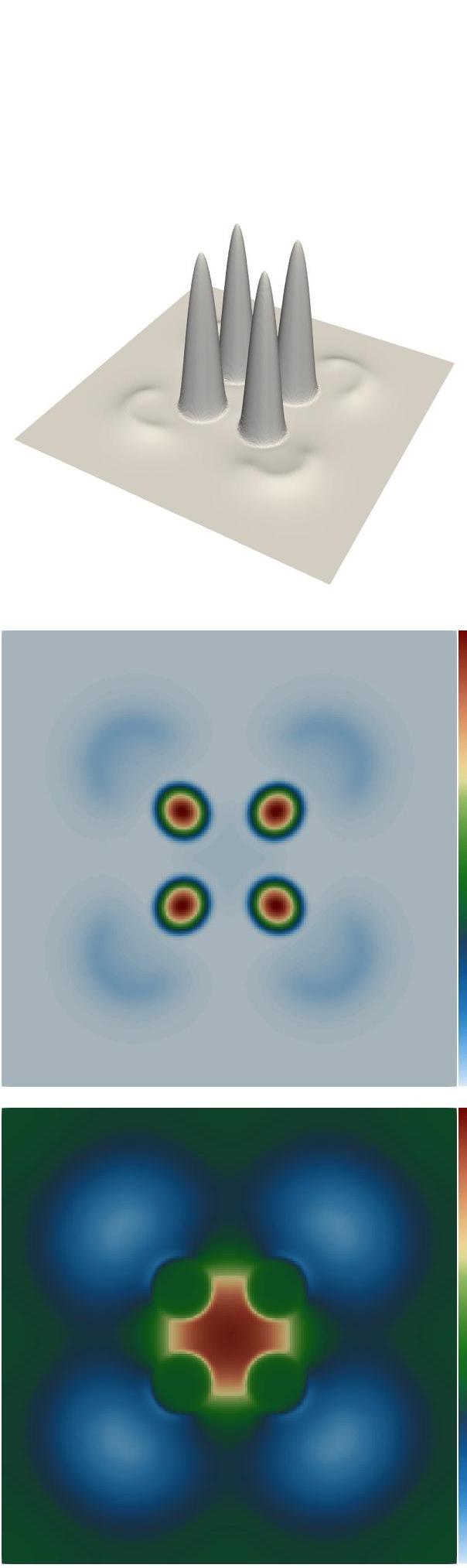}}
% \subfigure[t=0.8]{
% \includegraphics[width=0.15\textwidth]{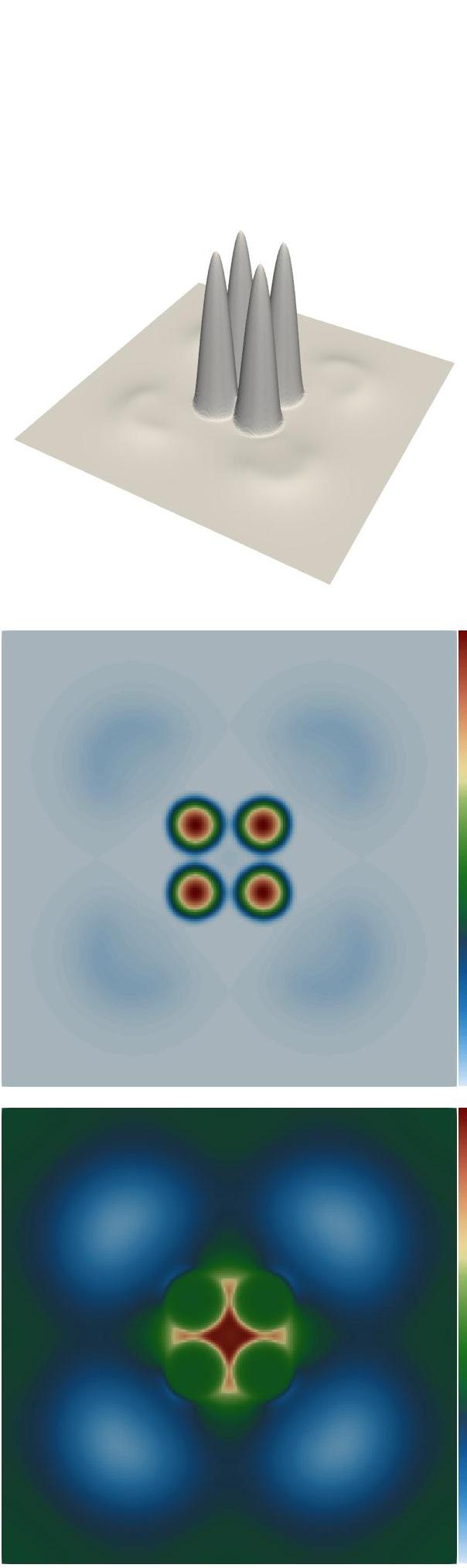}}
% \subfigure[t=1.0]{
% \includegraphics[width=0.15\textwidth]{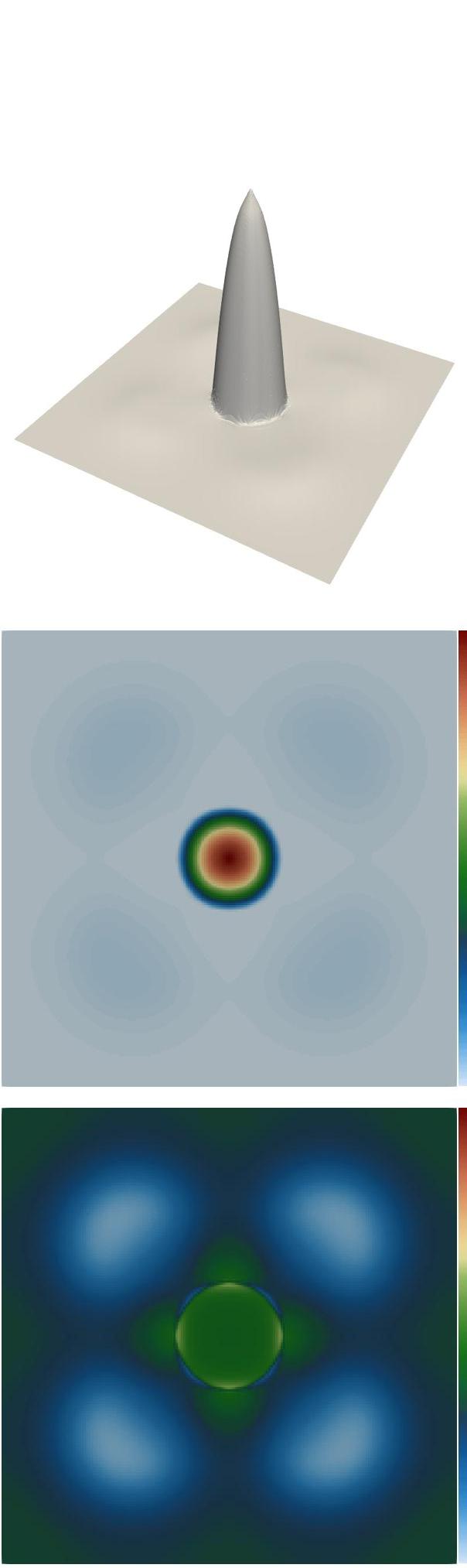}}
\subfigure[t=0.0]{
              \begin{tikzpicture}
        \node [anchor=south west,inner sep=0] (image) at (0,0) {\includegraphics[width=0.14\textwidth]{figs/zaC34T0h.jpg}};  % Adjust the width as needed
                \begin{scope}[shift={(image.south east)}]
            % Adjust positioning of numbers vertically and horizontally
            \node at (0.15, 3.8) {\scalebox{0.45}{\textbf{1.15}}}; % First number (on top)
            \node at (0.15, 2.9) {\scalebox{0.45}{\textbf{0.73}}}; % Second number (in the middle)
            \node at (0.15, 2.0) {\scalebox{0.45}{\textbf{0.30}}}; % Third number (on bottom)
            \node at (0.15, 1.85) {\scalebox{0.45}{\textbf{0.05}}}; % First number (on top)
            \node at (0.15, 0.9) {\scalebox{0.45}{\textbf{0.01}}}; % Second number (in the middle)
            \node at (0.14, 0.05) {\scalebox{0.45}{\textbf{-0.03}}}; % Third number (on bottom)
        \end{scope}        
    \end{tikzpicture}}
        \hspace{-0.43cm}
        \subfigure[t=0.2]{
              \begin{tikzpicture}
        \node [anchor=south west,inner sep=0] (image) at (0,0) {\includegraphics[width=0.14\textwidth]{figs/zaC34T2h.jpg}};  % Adjust the width as needed
        % Get the coordinates of the image and add the vertical numbers to the right
                \begin{scope}[shift={(image.south east)}]
            % Adjust positioning of numbers vertically and horizontally
            \node at (0.15, 3.8) {\scalebox{0.45}{\textbf{1.97}}}; % First number (on top)
            \node at (0.15, 2.9) {\scalebox{0.45}{\textbf{1.14}}}; % Second number (in the middle)
            \node at (0.15, 2.0) {\scalebox{0.45}{\textbf{0.30}}}; % Third number (on bottom)
            \node at (0.15, 1.85) {\scalebox{0.45}{\textbf{0.05}}}; % First number (on top)
            \node at (0.15, 0.9) {\scalebox{0.45}{\textbf{0.01}}}; % Second number (in the middle)
            \node at (0.14, 0.05) {\scalebox{0.45}{\textbf{-0.03}}}; % Third number (on bottom)
        \end{scope}        
    \end{tikzpicture}}
        \hspace{-0.43cm}
        \subfigure[t=0.4]{
              \begin{tikzpicture}
        \node [anchor=south west,inner sep=0] (image) at (0,0) {\includegraphics[width=0.14\textwidth]{figs/zaC34T4h.jpg}};  % Adjust the width as needed
                \begin{scope}[shift={(image.south east)}]
            % Adjust positioning of numbers vertically and horizontally
            \node at (0.15, 3.8) {\scalebox{0.45}{\textbf{2.20}}}; % First number (on top)
            \node at (0.15, 2.9) {\scalebox{0.45}{\textbf{1.25}}}; % Second number (in the middle)
            \node at (0.15, 2.0) {\scalebox{0.45}{\textbf{0.30}}}; % Third number (on bottom)
            \node at (0.15, 1.85) {\scalebox{0.45}{\textbf{0.05}}}; % First number (on top)
            \node at (0.15, 0.9) {\scalebox{0.45}{\textbf{0.01}}}; % Second number (in the middle)
            \node at (0.14, 0.05) {\scalebox{0.45}{\textbf{-0.03}}}; % Third number (on bottom)
        \end{scope}        
    \end{tikzpicture}}
        \hspace{-0.43cm}
\subfigure[t=0.6]{
              \begin{tikzpicture}
        \node [anchor=south west,inner sep=0] (image) at (0,0) {\includegraphics[width=0.14\textwidth]{figs/zaC34T6h.jpg}};  % Adjust the width as needed
                        \begin{scope}[shift={(image.south east)}]
            % Adjust positioning of numbers vertically and horizontally
                        % Adjust positioning of numbers vertically and horizontally
            \node at (0.15, 3.8) {\scalebox{0.45}{\textbf{2.20}}}; % First number (on top)
            \node at (0.15, 2.9) {\scalebox{0.45}{\textbf{1.25}}}; % Second number (in the middle)
            \node at (0.15, 2.0) {\scalebox{0.45}{\textbf{0.30}}}; % Third number (on bottom)
            \node at (0.15, 1.85) {\scalebox{0.45}{\textbf{0.05}}}; % First number (on top)
            \node at (0.15, 0.9) {\scalebox{0.45}{\textbf{0.01}}}; % Second number (in the middle)
            \node at (0.14, 0.05) {\scalebox{0.45}{\textbf{-0.03}}}; % Third number (on bottom)
        \end{scope}        
    \end{tikzpicture}}
        \hspace{-0.43cm}
    \subfigure[t=0.8]{
              \begin{tikzpicture}
        \node [anchor=south west,inner sep=0] (image) at (0,0) {\includegraphics[width=0.14\textwidth]{figs/zaC34T8h.jpg}};  % Adjust the width as needed
                \begin{scope}[shift={(image.south east)}]
            % Adjust positioning of numbers vertically and horizontally
            \node at (0.15, 3.8) {\scalebox{0.45}{\textbf{2.20}}}; % First number (on top)
            \node at (0.15, 2.9) {\scalebox{0.45}{\textbf{1.25}}}; % Second number (in the middle)
            \node at (0.15, 2.0) {\scalebox{0.45}{\textbf{0.30}}}; % Third number (on bottom)
            \node at (0.15, 1.85) {\scalebox{0.45}{\textbf{0.05}}}; % First number (on top)
            \node at (0.15, 0.9) {\scalebox{0.45}{\textbf{0.01}}}; % Second number (in the middle)
            \node at (0.14, 0.05) {\scalebox{0.45}{\textbf{-0.03}}}; % Third number (on bottom)
        \end{scope}        
    \end{tikzpicture}}
        \hspace{-0.43cm}
\subfigure[t=1.0]{
              \begin{tikzpicture}
        \node [anchor=south west,inner sep=0] (image) at (0,0) {\includegraphics[width=0.14\textwidth]{figs/zaC34T10h.jpg}};  % Adjust the width as needed
                \begin{scope}[shift={(image.south east)}]
            % Adjust positioning of numbers vertically and horizontally
            \node at (0.15, 3.8) {\scalebox{0.45}{\textbf{2.87}}}; % First number (on top)
            \node at (0.15, 2.9) {\scalebox{0.45}{\textbf{1.55}}}; % Second number (in the middle)
            \node at (0.15, 2.0) {\scalebox{0.45}{\textbf{0.30}}}; % Third number (on bottom)
            \node at (0.15, 1.85) {\scalebox{0.45}{\textbf{0.05}}}; % First number (on top)
            \node at (0.15, 0.9) {\scalebox{0.45}{\textbf{0.01}}}; % Second number (in the middle)
            \node at (0.14, 0.05) {\scalebox{0.45}{\textbf{-0.03}}}; % Third number (on bottom)
        \end{scope}        
    \end{tikzpicture}}
\caption{
% Example \ref{ex2}: 
\blue{
    Case 4 (Droplet merging with a symmetric target profile \eqref{mgA}):
Snapshots of (top row) 3D plots of the controlled surface height $h$, (middle row) contour plots of the controlled surface height, and (bottom row) contour plots of the activity field $\zeta$ at different times. The corresponding animation video can be found in the GitHub repository \citep{github}.
%     Top row: snapshots of 3D plots for the controlled surface height $h$ at different times;
% Middle row: snapshots of the controlled surface height contours at different times;
% Bottom row: snapshots of the activity field $\zeta$ at different times.
}
}
\label{fig:T34a}
\end{figure}
\blue{
We consider two sub-cases for the terminal surface height: the first sub-case has a symmetric terminal droplet profile centered at 
$(0.5, 0.5)$ which has the same total mass as the initial profile:
\begin{align}
    \label{mgA}
    h_{T}(x, y)  =&\; \epsilon+
\frac{5}{12}\left(1-\frac{75}{8}((x-0.5)^2+(y-0.5)^2)\right)_+,
\end{align}
while the second sub-case has a skewed droplet profile elongated in the $x$-direction
centered at $(0.6,0.55)$, which has a different total mass as the initial profile:
\begin{align}
    \label{mgB}
    h_{T}(x, y)  =&\; \epsilon+
\frac{5}{12}\left(1-\frac{75}{8}(4(x-0.6)^2+(y-0.55)^2)\right)_+,
\end{align}
The second sub-case showcases the effect of asymmetry in the evolution of the controlled surface height.
}
\end{subequations}
% \begin{align*}
% h_0(x, y) =&\; \epsilon+
% \frac{5}{6}\left(1-75((x-0.3)^2+(y-0.3)^2)\right)_+ \\
% &\;\;+\frac{5}{6}\left(1-75((x-0.3)^2+(y-0.7)^2)\right)_+\\
% &\;\;+\frac{5}{6}\left(1-75((x-0.7)^2+(y-0.3)^2)\right)_+\\
% &\;\;+\frac{5}{6}\left(1-75((x-0.7)^2+(y-0.7)^2)\right)_+,\\
% h_{\mathrm{trg}}(x, y)  =&\; \epsilon+
% \frac{5}{12}\left(1-\frac{75}{8}((x-0.5)^2+(y-0.5)^2)\right)_+.
% \end{align*}

\blue{Figure~\ref{fig:T34a} presents snapshots of the simulation results for the first sub-case with the symmetric target surface height \eqref{mgA}.
Similar to the droplet transport example discussed in Case 1, the controlled droplets quickly bead up and start moving toward the center of the domain, where the target droplet is placed. 
Capillary waves form behind the droplets as they shift towards the target position. Near the terminal time $t = 1$, the droplets coalesce, forming a single droplet at the center of the domain.  Notably, in this test case, contact line oscillations are only observed at the initial time $t=0$ due to the non-smoothness of the initial data. A radially symmetric and smooth solution profile for $h(t, \bm{x})$ is observed for $t > 0$. The activity field $\zeta$ at different times is plotted in the bottom row of Figure~\ref{fig:T34a}. This shows that in the early stage, high contractile active stress ($\zeta > 0$) appears near the center of the domain, whereas extensile active stress ($\zeta < 0$) around the individual droplets pushes the fluid towards the center of the domain. As the droplets become closer to each other, the corresponding contractile active stress becomes more concentrated near the center of the domain, and the magnitude of the activity field diminishes as the terminal droplet is formed.
}

% This models the MFC of four initial parabolic droplets 
% moving towards the center of the domain to form a single droplet through droplet coalescence.
% \begin{figure}
%     \centering
%  \mbox{ (a) 
%  % \includegraphics[width=0.46\textwidth]{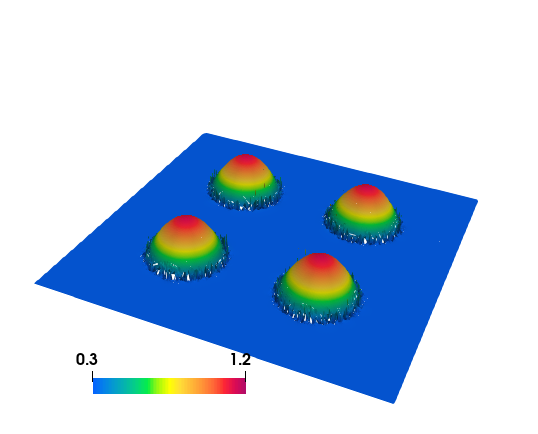}
%  \includegraphics[width=0.46\textwidth]{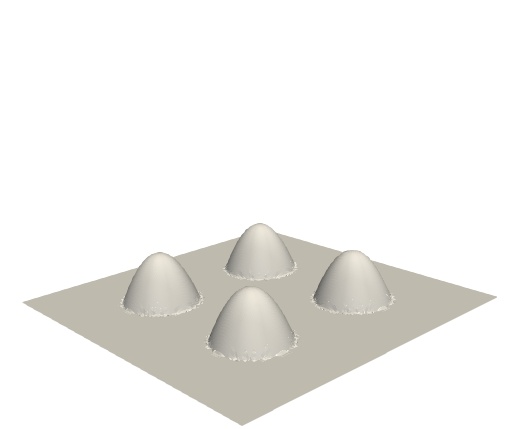}
%    (b) 
%    % \includegraphics[width=0.46\textwidth]{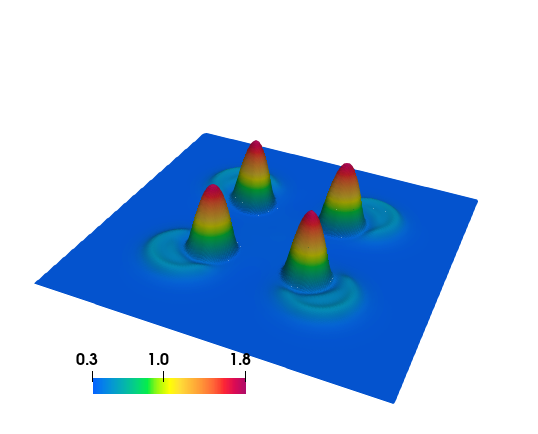}
%     \includegraphics[width=0.46\textwidth]{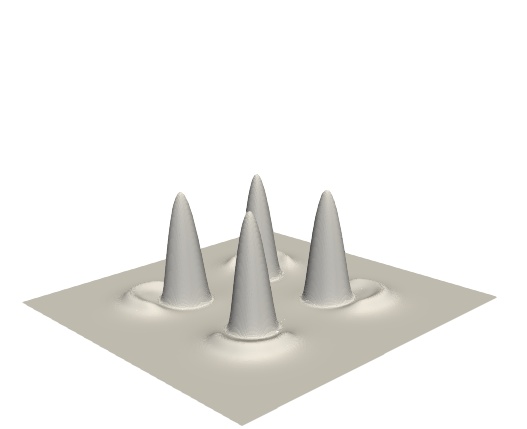}
%    }\\ 
%    % \vspace{-0.1in}
%  \mbox{(c)
%  % \includegraphics[width=0.46\textwidth]{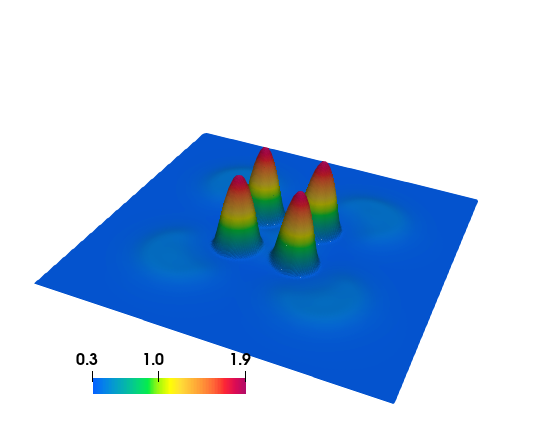}
%      \includegraphics[width=0.46\textwidth]{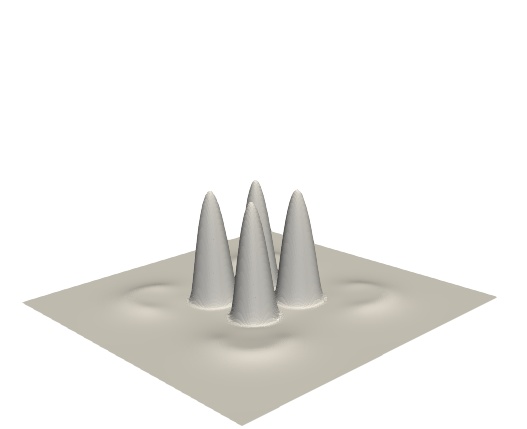}
%  (d) 
%  % \includegraphics[width=0.46\textwidth]{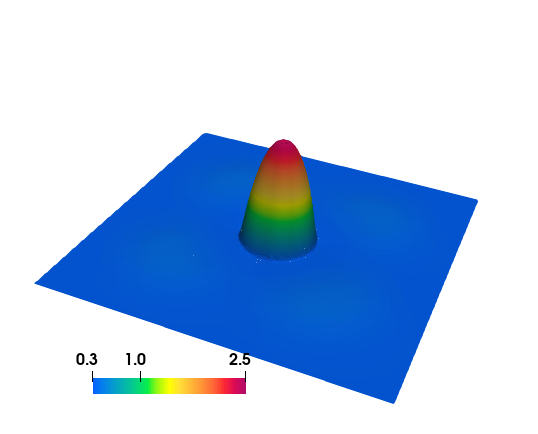}
%      \includegraphics[width=0.46\textwidth]{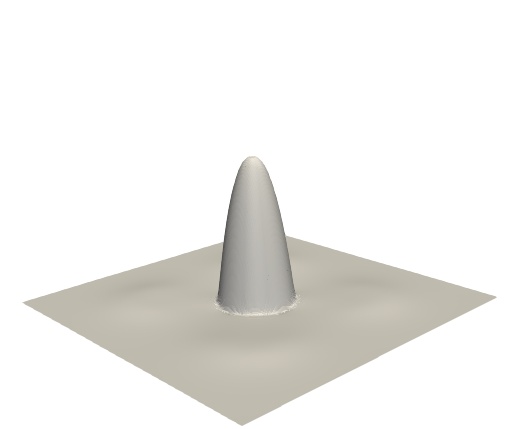}
%  }
%     \caption{
%     % Example \ref{ex2}: 
%     Case 4 (Droplet merging). 
% Snapshots of the controlled surface height  at different times: (a) $t=0.0$, (b) $t=0.3$, (c) $t=0.7$, (d) $t=1.0$.}
%     \label{fig:T34a}
% \end{figure}

\blue{Figure~\ref{fig:T38a} presents simulation snapshots for the second sub-case with the asymmetric target surface height \eqref{mgB}. This case exhibits more interesting transient dynamics due to the asymmetry in the target droplet located at the off-center position $(x, y) = (0.6, 0.55)$. In the early stage, the two droplets in the left half of the domain quickly shrink in size, while the two droplets in the right half of the domain gradually bead up at different speeds. In the later stage, the two remaining droplets slowly move towards each other and coalesce into the target droplet, which is elongated in the $x$ direction at the desired off-center location at time $t = 1$. The corresponding activity field plots illustrate that the high extensile active stress ($\zeta < 0$) around the two droplets on the left contributes to the rapid vanishing of the droplets. Meanwhile, the combined effects of the high contractile active stress ($\zeta > 0$) near the location of the terminal droplet and the low extensile stress ($\zeta < 0$) near the contact line of the two droplets on the right lead to the directed motion of the droplets towards the terminal location. This example specifically showcases the flexibility of our control mechanism in handling initial and terminal configurations of different total masses through the non-mass-conserving effects.
}
\begin{figure}
\centering
% \subfigure[t=0.0]{
% \includegraphics[width=0.15\textwidth]{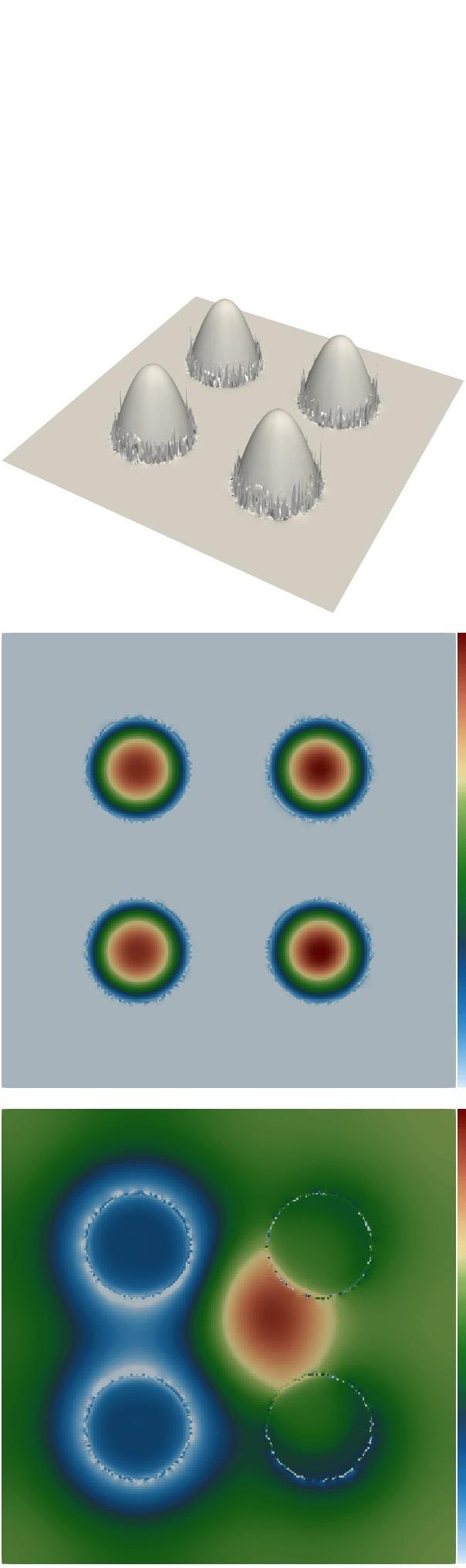}}
% \subfigure[t=0.2]{
% \includegraphics[width=0.15\textwidth]{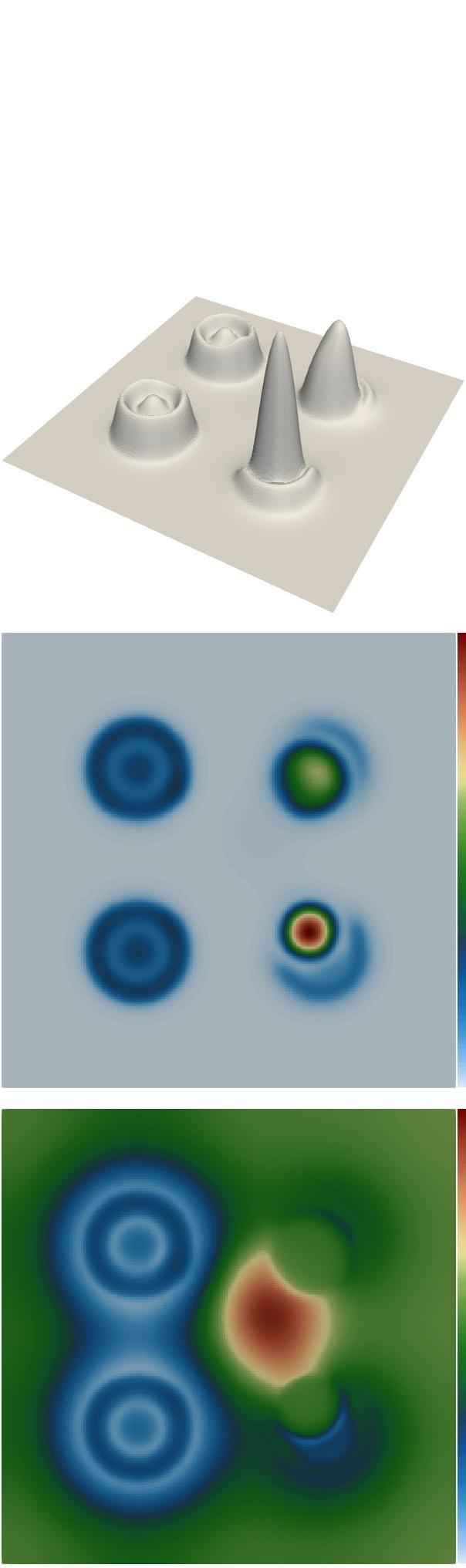}}
% \subfigure[t=0.4]{
% \includegraphics[width=0.15\textwidth]{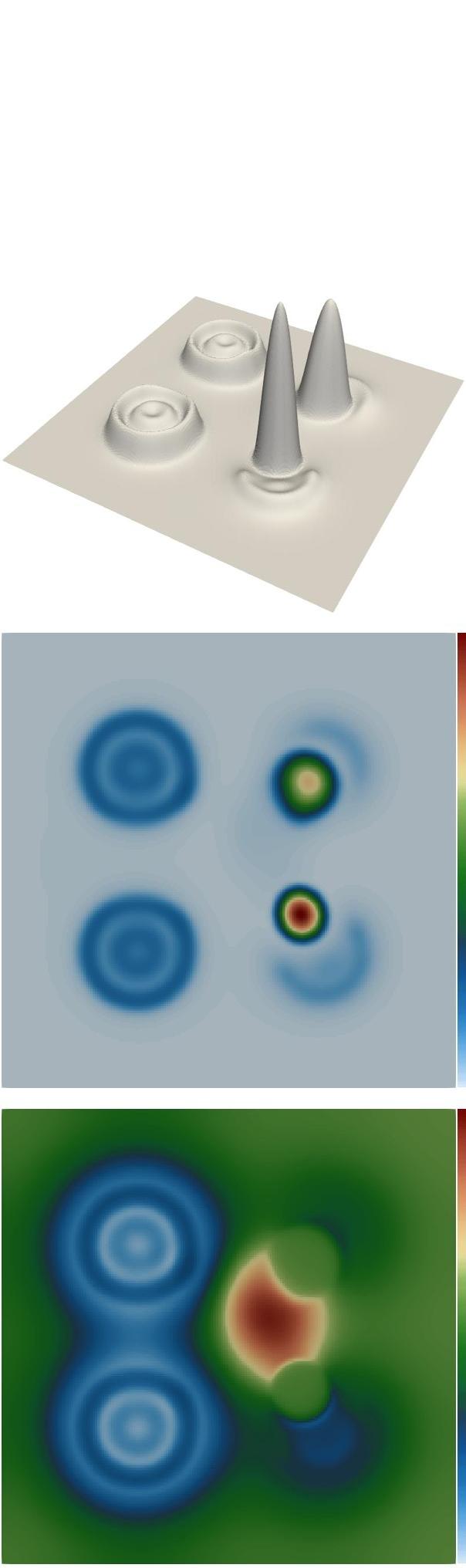}}
% \subfigure[t=0.6]{
% \includegraphics[width=0.15\textwidth]{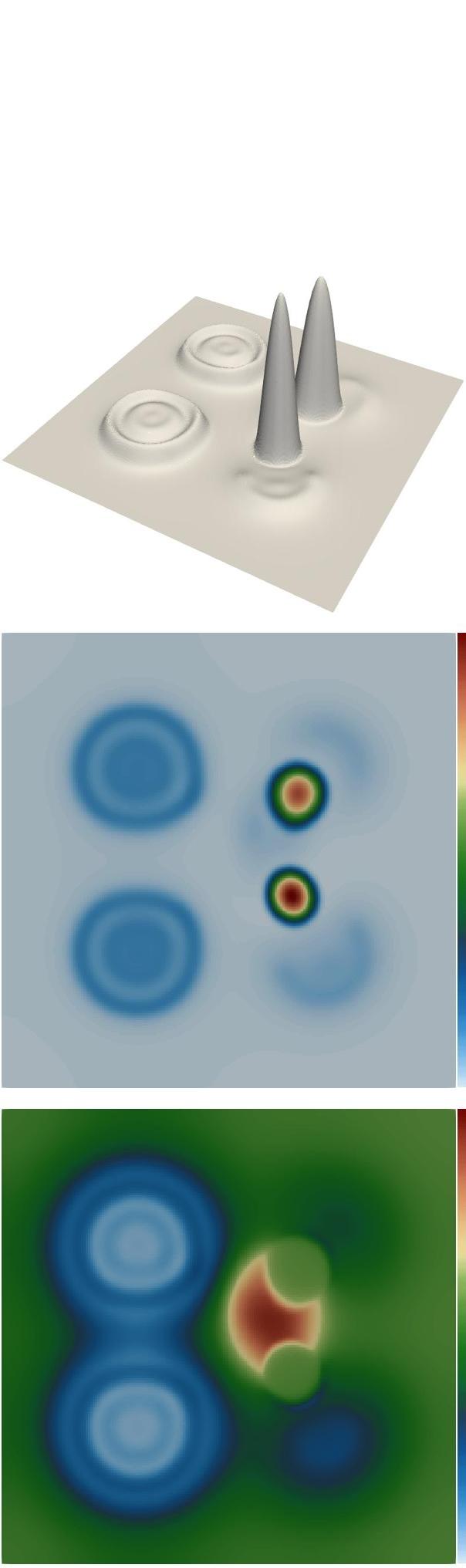}}
% \subfigure[t=0.8]{
% \includegraphics[width=0.15\textwidth]{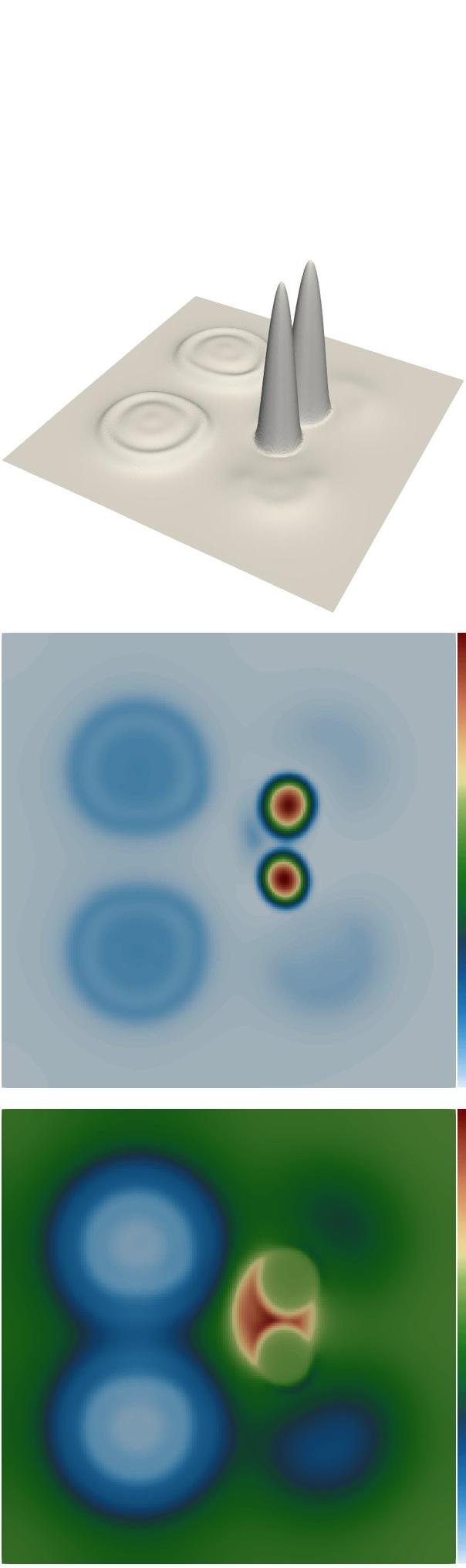}}
% \subfigure[t=1.0]{
% \includegraphics[width=0.15\textwidth]{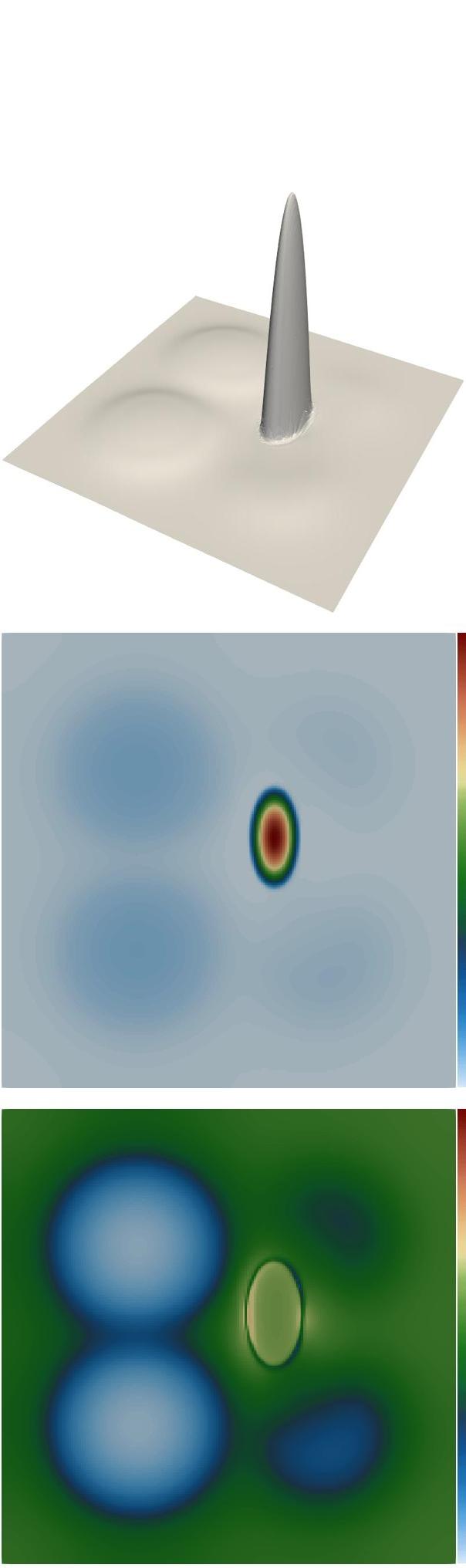}}
\subfigure[t=0.0]{
              \begin{tikzpicture}
        \node [anchor=south west,inner sep=0] (image) at (0,0) {\includegraphics[width=0.14\textwidth]{figs/zaC38T0h.jpg}};  % Adjust the width as needed
                \begin{scope}[shift={(image.south east)}]
            % Adjust positioning of numbers vertically and horizontally
            \node at (0.15, 3.8) {\scalebox{0.45}{\textbf{1.15}}}; % First number (on top)
            \node at (0.15, 2.9) {\scalebox{0.45}{\textbf{0.73}}}; % Second number (in the middle)
            \node at (0.15, 2.0) {\scalebox{0.45}{\textbf{0.30}}}; % Third number (on bottom)
            \node at (0.15, 1.85) {\scalebox{0.45}{\textbf{0.04}}}; % First number (on top)
            \node at (0.15, 0.9) {\scalebox{0.45}{\textbf{0.00}}}; % Second number (in the middle)
            \node at (0.14, 0.05) {\scalebox{0.45}{\textbf{-0.05}}}; % Third number (on bottom)
        \end{scope}        
    \end{tikzpicture}}
        \hspace{-0.43cm}
        \subfigure[t=0.2]{
              \begin{tikzpicture}
        \node [anchor=south west,inner sep=0] (image) at (0,0) {\includegraphics[width=0.14\textwidth]{figs/zaC38T2h.jpg}};  % Adjust the width as needed
        % Get the coordinates of the image and add the vertical numbers to the right
                \begin{scope}[shift={(image.south east)}]
            % Adjust positioning of numbers vertically and horizontally
            \node at (0.15, 3.8) {\scalebox{0.45}{\textbf{1.91}}}; % First number (on top)
            \node at (0.15, 2.9) {\scalebox{0.45}{\textbf{1.11}}}; % Second number (in the middle)
            \node at (0.15, 2.0) {\scalebox{0.45}{\textbf{0.30}}}; % Third number (on bottom)
            \node at (0.15, 1.85) {\scalebox{0.45}{\textbf{0.04}}}; % First number (on top)
            \node at (0.15, 0.9) {\scalebox{0.45}{\textbf{0.00}}}; % Second number (in the middle)
            \node at (0.14, 0.05) {\scalebox{0.45}{\textbf{-0.05}}}; % Third number (on bottom)
        \end{scope}        
    \end{tikzpicture}}
        \hspace{-0.43cm}
        \subfigure[t=0.4]{
              \begin{tikzpicture}
        \node [anchor=south west,inner sep=0] (image) at (0,0) {\includegraphics[width=0.14\textwidth]{figs/zaC38T4h.jpg}};  % Adjust the width as needed
                \begin{scope}[shift={(image.south east)}]
            % Adjust positioning of numbers vertically and horizontally
            \node at (0.15, 3.8) {\scalebox{0.45}{\textbf{2.10}}}; % First number (on top)
            \node at (0.15, 2.9) {\scalebox{0.45}{\textbf{1.20}}}; % Second number (in the middle)
            \node at (0.15, 2.0) {\scalebox{0.45}{\textbf{0.30}}}; % Third number (on bottom)
            \node at (0.15, 1.85) {\scalebox{0.45}{\textbf{0.04}}}; % First number (on top)
            \node at (0.15, 0.9) {\scalebox{0.45}{\textbf{0.00}}}; % Second number (in the middle)
            \node at (0.14, 0.05) {\scalebox{0.45}{\textbf{-0.05}}}; % Third number (on bottom)
        \end{scope}        
    \end{tikzpicture}}
        \hspace{-0.43cm}
\subfigure[t=0.6]{
              \begin{tikzpicture}
        \node [anchor=south west,inner sep=0] (image) at (0,0) {\includegraphics[width=0.14\textwidth]{figs/zaC38T6h.jpg}};  % Adjust the width as needed
                        \begin{scope}[shift={(image.south east)}]
            % Adjust positioning of numbers vertically and horizontally
                        % Adjust positioning of numbers vertically and horizontally
            \node at (0.15, 3.8) {\scalebox{0.45}{\textbf{2.11}}}; % First number (on top)
            \node at (0.15, 2.9) {\scalebox{0.45}{\textbf{1.21}}}; % Second number (in the middle)
            \node at (0.15, 2.0) {\scalebox{0.45}{\textbf{0.30}}}; % Third number (on bottom)
            \node at (0.15, 1.85) {\scalebox{0.45}{\textbf{0.04}}}; % First number (on top)
            \node at (0.15, 0.9) {\scalebox{0.45}{\textbf{0.00}}}; % Second number (in the middle)
            \node at (0.14, 0.05) {\scalebox{0.45}{\textbf{-0.05}}}; % Third number (on bottom)
        \end{scope}        
    \end{tikzpicture}}
        \hspace{-0.43cm}
    \subfigure[t=0.8]{
              \begin{tikzpicture}
        \node [anchor=south west,inner sep=0] (image) at (0,0) {\includegraphics[width=0.14\textwidth]{figs/zaC38T8h.jpg}};  % Adjust the width as needed
                \begin{scope}[shift={(image.south east)}]
            % Adjust positioning of numbers vertically and horizontally
            \node at (0.15, 3.8) {\scalebox{0.45}{\textbf{2.13}}}; % First number (on top)
            \node at (0.15, 2.9) {\scalebox{0.45}{\textbf{1.22}}}; % Second number (in the middle)
            \node at (0.15, 2.0) {\scalebox{0.45}{\textbf{0.30}}}; % Third number (on bottom)
            \node at (0.15, 1.85) {\scalebox{0.45}{\textbf{0.04}}}; % First number (on top)
            \node at (0.15, 0.9) {\scalebox{0.45}{\textbf{0.00}}}; % Second number (in the middle)
            \node at (0.14, 0.05) {\scalebox{0.45}{\textbf{-0.05}}}; % Third number (on bottom)
        \end{scope}        
    \end{tikzpicture}}
        \hspace{-0.43cm}
\subfigure[t=1.0]{
              \begin{tikzpicture}
        \node [anchor=south west,inner sep=0] (image) at (0,0) {\includegraphics[width=0.14\textwidth]{figs/zaC38T10h.jpg}};  % Adjust the width as needed
                \begin{scope}[shift={(image.south east)}]
            % Adjust positioning of numbers vertically and horizontally
            \node at (0.15, 3.8) {\scalebox{0.45}{\textbf{2.94}}}; % First number (on top)
            \node at (0.15, 2.9) {\scalebox{0.45}{\textbf{1.62}}}; % Second number (in the middle)
            \node at (0.15, 2.0) {\scalebox{0.45}{\textbf{0.30}}}; % Third number (on bottom)
            \node at (0.15, 1.85) {\scalebox{0.45}{\textbf{0.04}}}; % First number (on top)
            \node at (0.15, 0.9) {\scalebox{0.45}{\textbf{0.00}}}; % Second number (in the middle)
            \node at (0.14, 0.05) {\scalebox{0.45}{\textbf{-0.05}}}; % Third number (on bottom)
        \end{scope}        
    \end{tikzpicture}}
\caption{
% Example \ref{ex2}: 
\blue{
    Case 4 (Droplet merging with an asymmetric target profile \eqref{mgB}):
Snapshots of (top row) 3D plots of the controlled surface height $h$, (middle row) contour plots of the controlled surface height, and (bottom row) contour plots of the activity field $\zeta$ at different times. The corresponding animation video can be found in the GitHub repository \citep{github}.
%     Top row: snapshots of 3D plots for the controlled surface height $h$ at different times;
% Middle row: snapshots of the controlled surface height contours at different times;
% Bottom row: snapshots of the activity field $\zeta$ at different times.
}
}
\label{fig:T38a}
\end{figure}

% \begin{figure}
% \centering
% \subfigure[t=0.0]{
% \includegraphics[width=0.46\textwidth]{figs/T34_0.png}}
% \subfigure[t=0.3]{
% \includegraphics[width=0.46\textwidth]{figs/T34_1.png}}
% \subfigure[t=0.7]{
% \includegraphics[width=0.46\textwidth]{figs/T34_2.png}}
% \subfigure[t=1.0]{
% \includegraphics[width=0.46\textwidth]{figs/T34_3.png}}
% \caption{Example \ref{ex2}: Case 4 (merging). 
% Snapshots of surface height contour at different times.
% }
% \label{fig:T34a}
% \end{figure}

% \begin{figure}
% \centering
% % \includegraphics[width=0.5\textwidth]{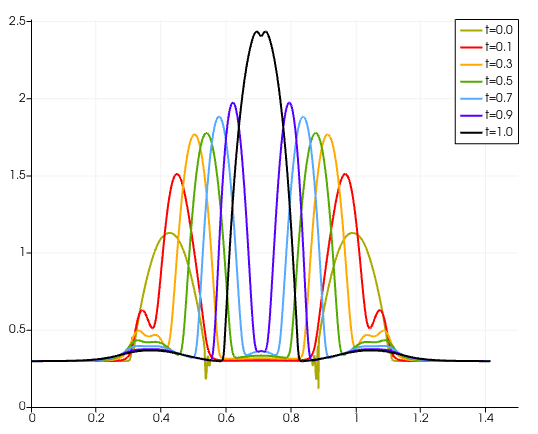}
% \includegraphics[width=0.5\textwidth]{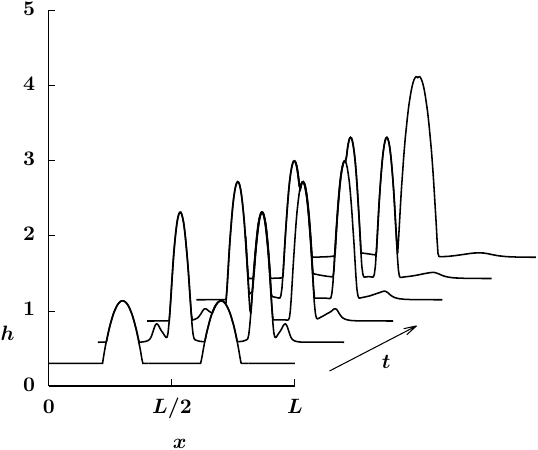}
% \caption{
% % Example \ref{ex2}: 
% Case 4 (Droplet merging). 
% Time evolution of the controlled surface height along the north-east diagonal cutline shown in Figure \ref{fig:T34a}.
% }
% \label{fig:T34b}
% \end{figure}

\subsubsection*{Case 5: Droplet splitting.}
For the case of droplet splitting, we consider the reverse process of the symmetric droplet merging considered in Case 4 and take the initial and target surface heights as 
\begin{align}
h_0(x, y) =&\;  \epsilon+
\tfrac{5}{12}[1-\tfrac{75}{8}((x-0.5)^2+(y-0.5)^2)]_+,\\
\blue{h_{T}(x, y)} =&\; \epsilon+\sum_{i=1}^{4}\tfrac{5}{6}[1-75((x-x_i)^2+(y-y_i)^2)]_+,
\end{align}
% \begin{align*}
% h_0(x, y) =&\; \epsilon+
% \frac{5}{12}\left(1-\frac{75}{8}((x-0.5)^2+(y-0.5)^2)\right)_+,\\
% h_{\mathrm{trg}}(x, y) =&\; \epsilon+
% \frac{5}{6}\left(1-75((x-0.3)^2+(y-0.3)^2)\right)_+ \\
% &\;\;+\frac{5}{6}\left(1-75((x-0.3)^2+(y-0.7)^2)\right)_+\\
% &\;\;+\frac{5}{6}\left(1-75((x-0.7)^2+(y-0.3)^2)\right)_+\\
% &\;\;+\frac{5}{6}\left(1-75((x-0.7)^2+(y-0.7)^2)\right)_+.
% \end{align*}
% This is a reverse process of Case 4, 
where the initial and target profiles are reversed compared to the first example in Case 4, with the droplet configurations in the initial and target profiles identical to those specified in \eqref{mgB} and \eqref{mg_init}, respectively.
% and the locations of the peaks $\{(x_i, y_i)\}$ in the target profile are identical to those in the initial profile specified in Case 4. 
Snapshots of the simulation results for the controlled dynamics are presented in Figure~\ref{fig:T35a}.
This example models the MFC of one initial parabolic droplet splitting into four smaller droplets and shifting into target positions individually. Compared to the droplet merging case, splitting a single droplet appears to be more challenging, and the terminal surface height profile obtained at the terminal time $t = 1$ still maintains a ridge connecting the small droplets. 
\blue{The activity plots in the bottom row of Figure~\ref{fig:T35a} demonstrate that near time $t = 0$, the inhomogeneous distribution of high extensile active stress ($\zeta < 0$) near the contact line of the center droplet contributes to the fast splitting of the initial droplet. Meanwhile, the high contractile active stress ($\zeta > 0$) around the terminal locations of the target droplets drives the individual droplets away from the center and pushes them towards the target locations. As the terminal time $t = 1$ approaches, high extensile stress ($\zeta < 0$) at the center of the domain and high contractile stress ($\zeta > 0$) are formed as the individual droplets spread and settle into their target shapes.
}

\begin{figure}
\centering
\subfigure[t=0.0]{
              \begin{tikzpicture}
        \node [anchor=south west,inner sep=0] (image) at (0,0) {\includegraphics[width=0.14\textwidth]{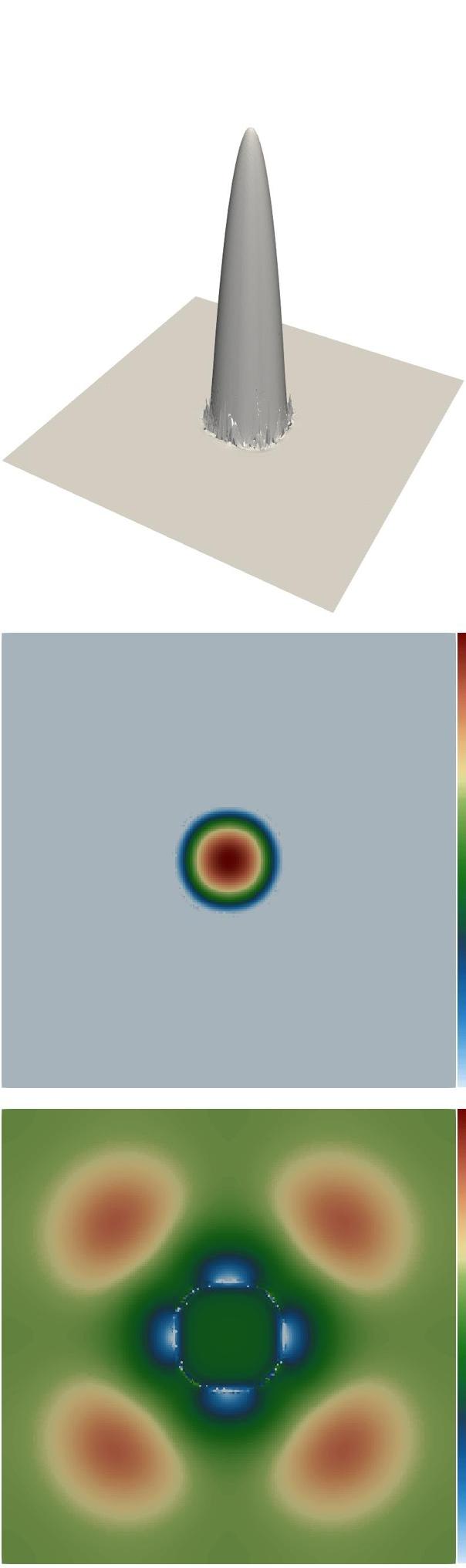}};  % Adjust the width as needed
                \begin{scope}[shift={(image.south east)}]
            % Adjust positioning of numbers vertically and horizontally
            \node at (0.15, 3.8) {\scalebox{0.45}{\textbf{3.42}}}; % First number (on top)
            \node at (0.15, 2.9) {\scalebox{0.45}{\textbf{1.71}}}; % Second number (in the middle)
            \node at (0.15, 2.0) {\scalebox{0.45}{\textbf{0.30}}}; % Third number (on bottom)
            \node at (0.15, 1.85) {\scalebox{0.45}{\textbf{0.04}}}; % First number (on top)
            \node at (0.15, 0.9) {\scalebox{0.45}{\textbf{0.00}}}; % Second number (in the middle)
            \node at (0.14, 0.05) {\scalebox{0.45}{\textbf{-0.04}}}; % Third number (on bottom)
        \end{scope}        
    \end{tikzpicture}}
        \hspace{-0.43cm}
        \subfigure[t=0.2]{
              \begin{tikzpicture}
        \node [anchor=south west,inner sep=0] (image) at (0,0) {\includegraphics[width=0.14\textwidth]{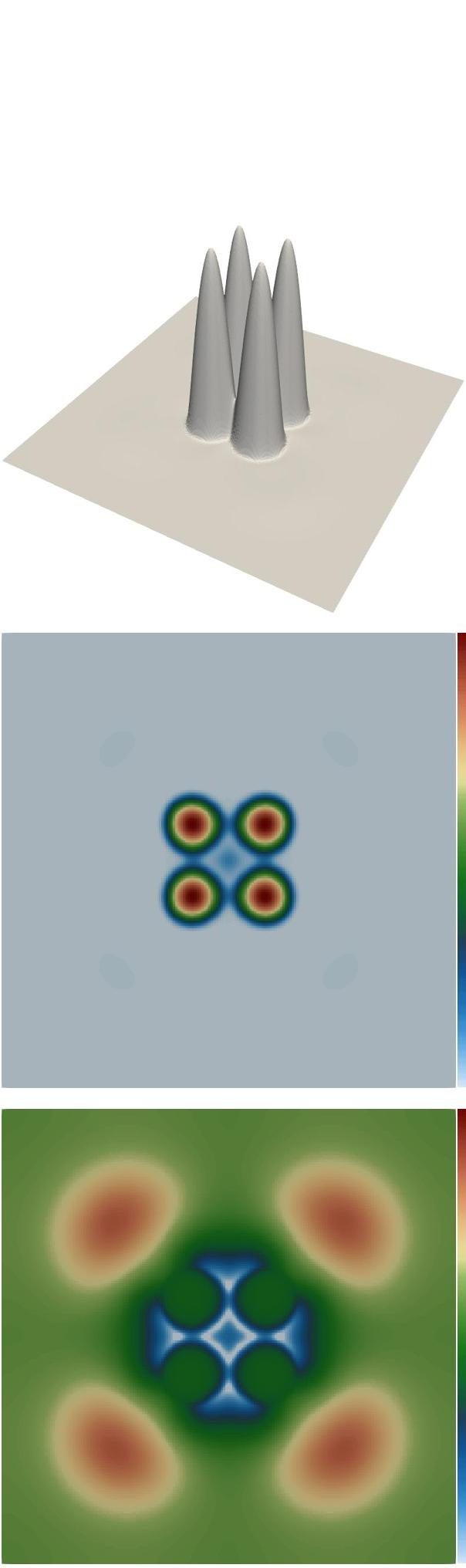}};  % Adjust the width as needed
        % Get the coordinates of the image and add the vertical numbers to the right
                \begin{scope}[shift={(image.south east)}]
            % Adjust positioning of numbers vertically and horizontally
            \node at (0.15, 3.8) {\scalebox{0.45}{\textbf{2.37}}}; % First number (on top)
            \node at (0.15, 2.9) {\scalebox{0.45}{\textbf{1.34}}}; % Second number (in the middle)
            \node at (0.15, 2.0) {\scalebox{0.45}{\textbf{0.30}}}; % Third number (on bottom)
            \node at (0.15, 1.85) {\scalebox{0.45}{\textbf{0.04}}}; % First number (on top)
            \node at (0.15, 0.9) {\scalebox{0.45}{\textbf{0.00}}}; % Second number (in the middle)
            \node at (0.14, 0.05) {\scalebox{0.45}{\textbf{-0.04}}}; % Third number (on bottom)
        \end{scope}        
    \end{tikzpicture}}
        \hspace{-0.43cm}
        \subfigure[t=0.4]{
              \begin{tikzpicture}
        \node [anchor=south west,inner sep=0] (image) at (0,0) {\includegraphics[width=0.14\textwidth]{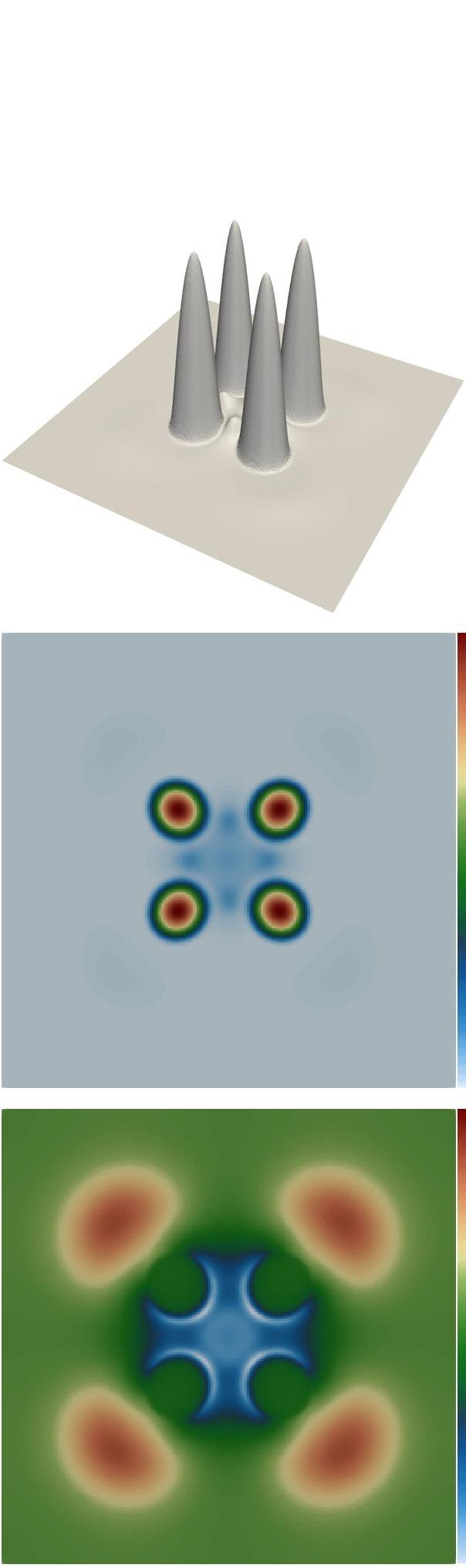}};  % Adjust the width as needed
                \begin{scope}[shift={(image.south east)}]
            % Adjust positioning of numbers vertically and horizontally
            \node at (0.15, 3.8) {\scalebox{0.45}{\textbf{2.35}}}; % First number (on top)
            \node at (0.15, 2.9) {\scalebox{0.45}{\textbf{1.33}}}; % Second number (in the middle)
            \node at (0.15, 2.0) {\scalebox{0.45}{\textbf{0.30}}}; % Third number (on bottom)
            \node at (0.15, 1.85) {\scalebox{0.45}{\textbf{0.04}}}; % First number (on top)
            \node at (0.15, 0.9) {\scalebox{0.45}{\textbf{0.00}}}; % Second number (in the middle)
            \node at (0.14, 0.05) {\scalebox{0.45}{\textbf{-0.04}}}; % Third number (on bottom)
        \end{scope}        
    \end{tikzpicture}}
        \hspace{-0.43cm}
\subfigure[t=0.6]{
              \begin{tikzpicture}
        \node [anchor=south west,inner sep=0] (image) at (0,0) {\includegraphics[width=0.14\textwidth]{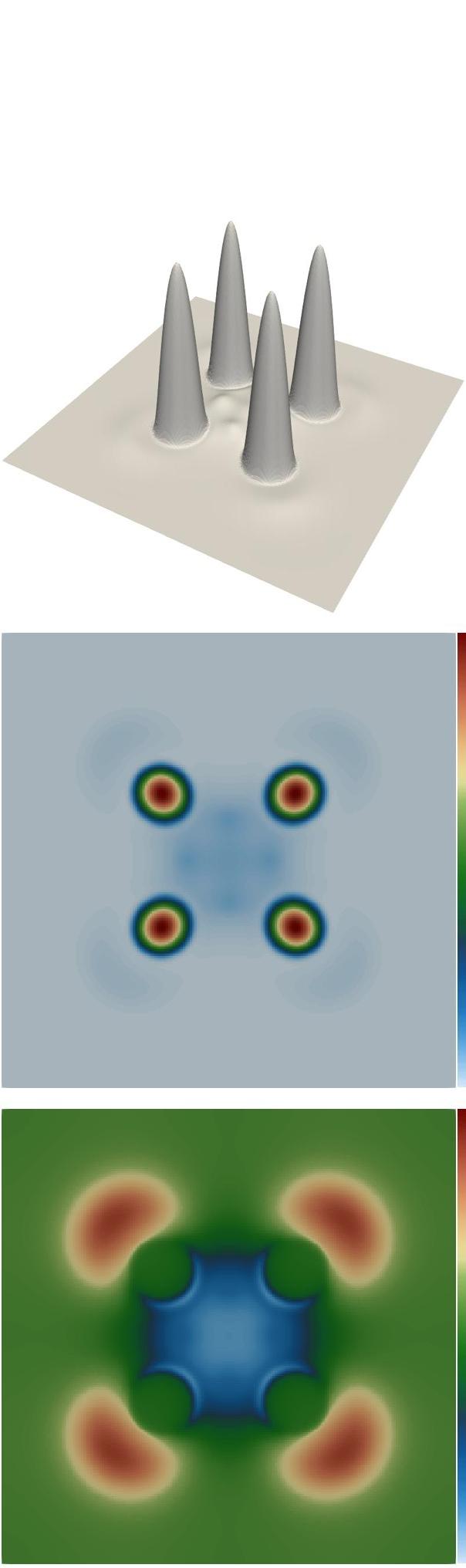}};  % Adjust the width as needed
                        \begin{scope}[shift={(image.south east)}]
            % Adjust positioning of numbers vertically and horizontally
                        % Adjust positioning of numbers vertically and horizontally
            \node at (0.15, 3.8) {\scalebox{0.45}{\textbf{2.25}}}; % First number (on top)
            \node at (0.15, 2.9) {\scalebox{0.45}{\textbf{1.28}}}; % Second number (in the middle)
            \node at (0.15, 2.0) {\scalebox{0.45}{\textbf{0.30}}}; % Third number (on bottom)
            \node at (0.15, 1.85) {\scalebox{0.45}{\textbf{0.04}}}; % First number (on top)
            \node at (0.15, 0.9) {\scalebox{0.45}{\textbf{0.00}}}; % Second number (in the middle)
            \node at (0.14, 0.05) {\scalebox{0.45}{\textbf{-0.04}}}; % Third number (on bottom)
        \end{scope}        
    \end{tikzpicture}}
        \hspace{-0.43cm}
    \subfigure[t=0.8]{
              \begin{tikzpicture}
        \node [anchor=south west,inner sep=0] (image) at (0,0) {\includegraphics[width=0.14\textwidth]{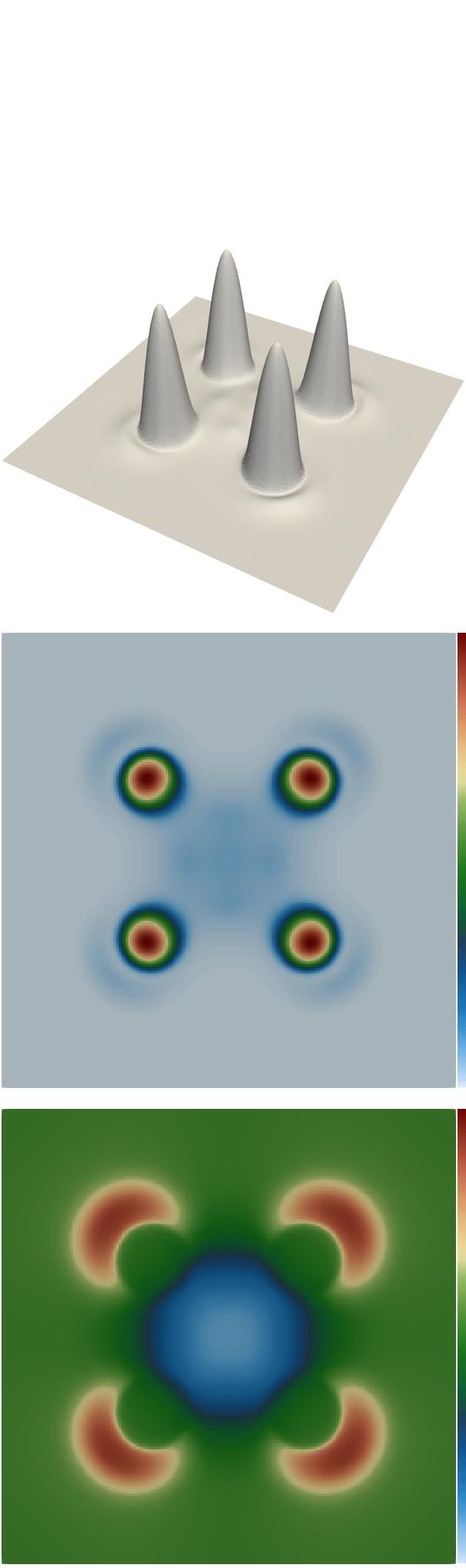}};  % Adjust the width as needed
                \begin{scope}[shift={(image.south east)}]
            % Adjust positioning of numbers vertically and horizontally
            \node at (0.15, 3.8) {\scalebox{0.45}{\textbf{1.85}}}; % First number (on top)
            \node at (0.15, 2.9) {\scalebox{0.45}{\textbf{1.08}}}; % Second number (in the middle)
            \node at (0.15, 2.0) {\scalebox{0.45}{\textbf{0.30}}}; % Third number (on bottom)
            \node at (0.15, 1.85) {\scalebox{0.45}{\textbf{0.04}}}; % First number (on top)
            \node at (0.15, 0.9) {\scalebox{0.45}{\textbf{0.00}}}; % Second number (in the middle)
            \node at (0.14, 0.05) {\scalebox{0.45}{\textbf{-0.04}}}; % Third number (on bottom)
        \end{scope}        
    \end{tikzpicture}}
        \hspace{-0.43cm}
\subfigure[t=1.0]{
              \begin{tikzpicture}
        \node [anchor=south west,inner sep=0] (image) at (0,0) {\includegraphics[width=0.14\textwidth]{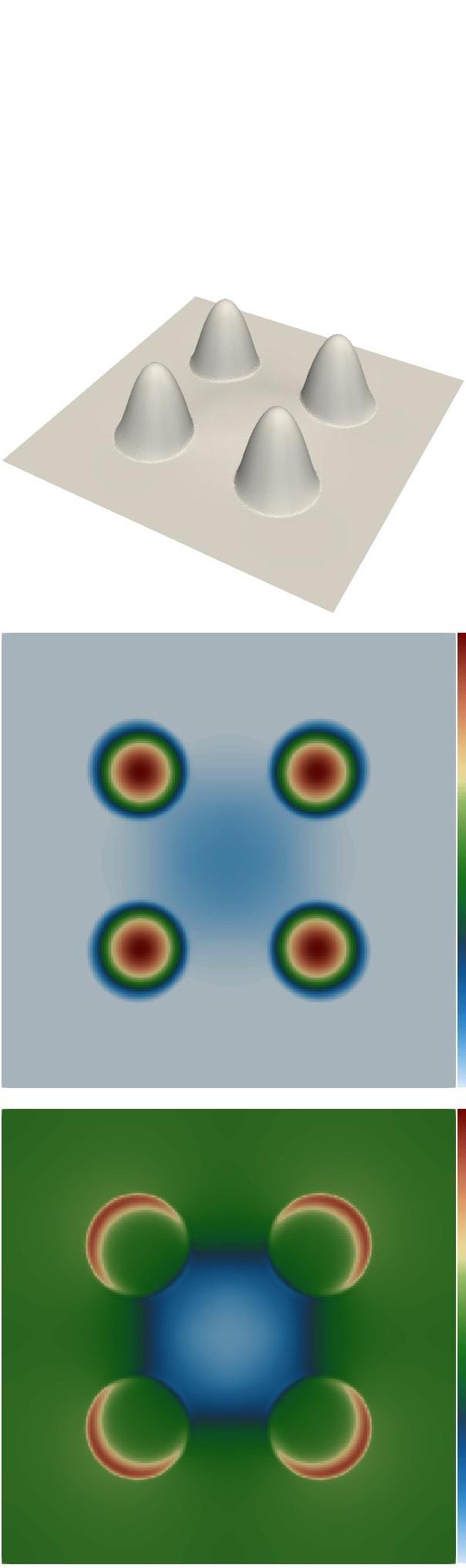}};  % Adjust the width as needed
                \begin{scope}[shift={(image.south east)}]
            % Adjust positioning of numbers vertically and horizontally
            \node at (0.15, 3.8) {\scalebox{0.45}{\textbf{1.15}}}; % First number (on top)
            \node at (0.15, 2.9) {\scalebox{0.45}{\textbf{0.73}}}; % Second number (in the middle)
            \node at (0.15, 2.0) {\scalebox{0.45}{\textbf{0.30}}}; % Third number (on bottom)
            \node at (0.15, 1.85) {\scalebox{0.45}{\textbf{0.04}}}; % First number (on top)
            \node at (0.15, 0.9) {\scalebox{0.45}{\textbf{0.00}}}; % Second number (in the middle)
            \node at (0.14, 0.05) {\scalebox{0.45}{\textbf{-0.04}}}; % Third number (on bottom)
        \end{scope}        
    \end{tikzpicture}}
% \subfigure[t=0.0]{
% \includegraphics[width=0.15\textwidth]{figs/zaC35T0h.jpg}}
% \subfigure[t=0.2]{
% \includegraphics[width=0.15\textwidth]{figs/zaC35T2h.jpg}}
% \subfigure[t=0.4]{
% \includegraphics[width=0.15\textwidth]{figs/zaC35T4h.jpg}}
% \subfigure[t=0.6]{
% \includegraphics[width=0.15\textwidth]{figs/zaC35T6h.jpg}}
% \subfigure[t=0.8]{
% \includegraphics[width=0.15\textwidth]{figs/zaC35T8h.jpg}}
% \subfigure[t=1.0]{
% \includegraphics[width=0.15\textwidth]{figs/zaC35T10h.jpg}}
\caption{
% Example \ref{ex2}: 
\blue{
    Case 5 (Droplet splitting): 
Snapshots of (top row) 3D plots of the controlled surface height $h$, (middle row) contour plots of the controlled surface height, and (bottom row) contour plots of the activity field $\zeta$ at different times. The corresponding animation video can be found in the GitHub repository \citep{github}; see Movie.avi in online supplementary material.
%     Top row: snapshots of 3D plots for the controlled surface height $h$ at different times;
% Middle row: snapshots of the controlled surface height contours at different times;
% Bottom row: snapshots of the activity field $\zeta$ at different times.
}
}
% \mbox{
% % (a) \includegraphics[width=0.46\textwidth]{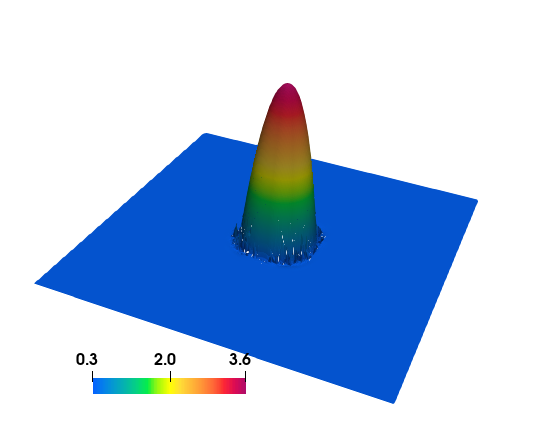}
% % (b) \includegraphics[width=0.46\textwidth]{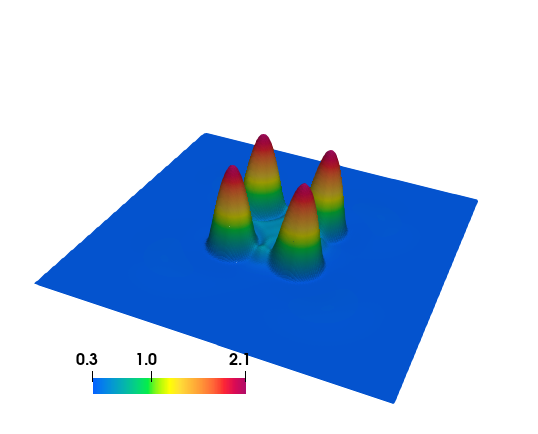}
% (a) \includegraphics[width=0.46\textwidth]{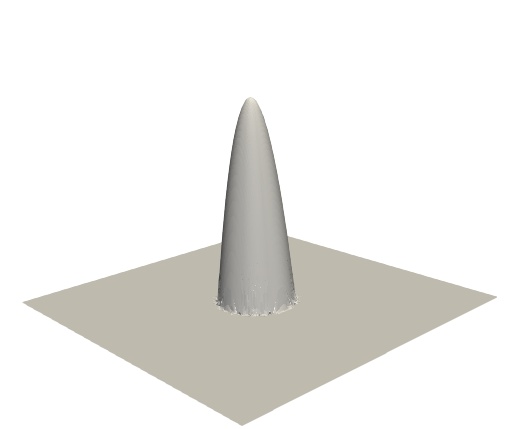}
% (b) \includegraphics[width=0.46\textwidth]{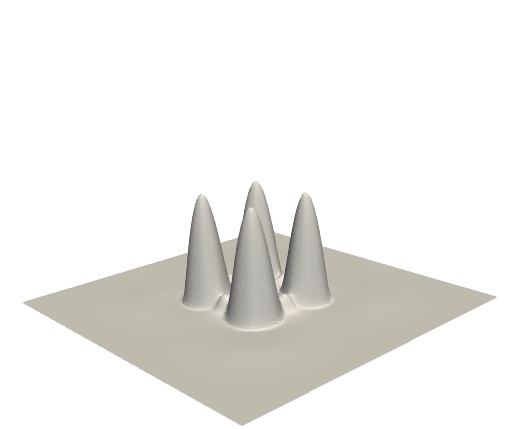}
% }\\
% \mbox{
% % (c) \includegraphics[width=0.46\textwidth]{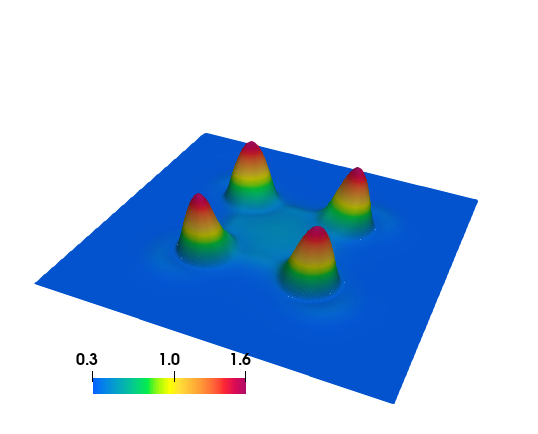}
% % (d) \includegraphics[width=0.46\textwidth]{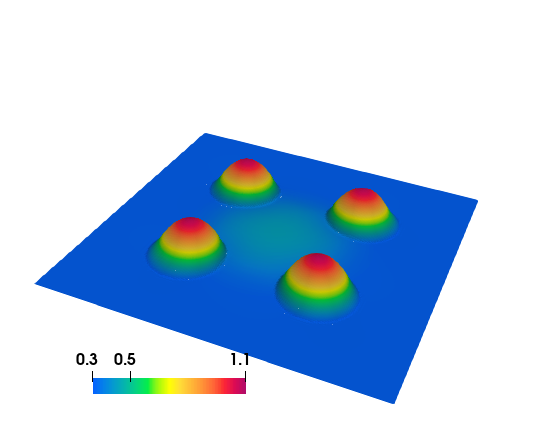}
% (c) \includegraphics[width=0.46\textwidth]{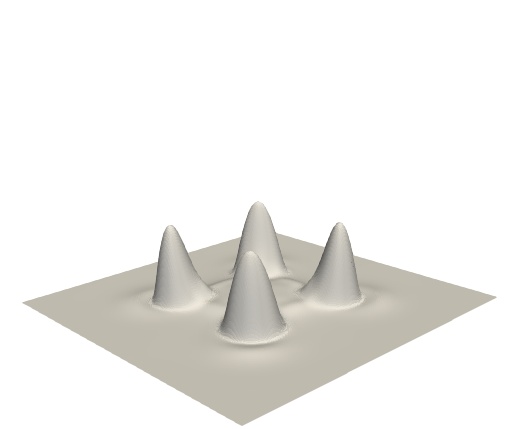}
% (d) \includegraphics[width=0.46\textwidth]{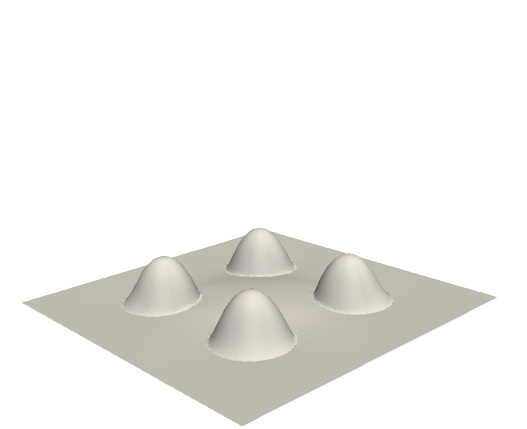}
% }
% \caption{
% % Example \ref{ex2}: 
% Case 5 (Droplet splitting). 
% Snapshots of the controlled surface height at different times.
%  (a) $t=0.0$, (b) $t=0.3$, (c) $t=0.7$, (d) $t=1.0$. }
\label{fig:T35a}
\end{figure}

% \begin{figure}
% \centering
% \subfigure[t=0.0]{
% \includegraphics[width=0.46\textwidth]{figs/T35_0.png}}
% \subfigure[t=0.3]{
% \includegraphics[width=0.46\textwidth]{figs/T35_1.png}}
% \subfigure[t=0.7]{
% \includegraphics[width=0.46\textwidth]{figs/T35_2.png}}
% \subfigure[t=1.0]{
% \includegraphics[width=0.46\textwidth]{figs/T35_3.png}}
% \caption{Example \ref{ex2}: Case 5 (splitting). 
% Snapshots of surface height contour at different times.
% }
% \label{fig:T35a}
% \end{figure}

% \begin{figure}
% \centering
% % \includegraphics[width=0.5\textwidth]{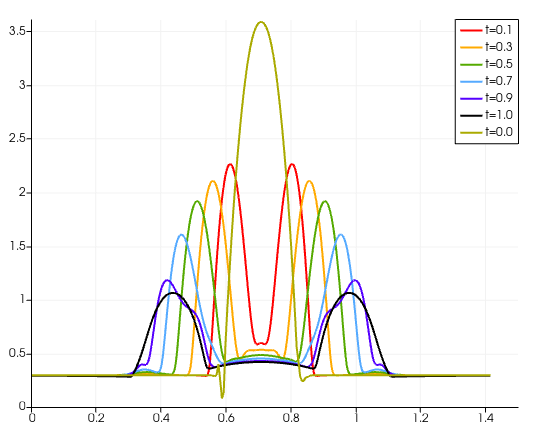}
% \includegraphics[width=0.5\textwidth]{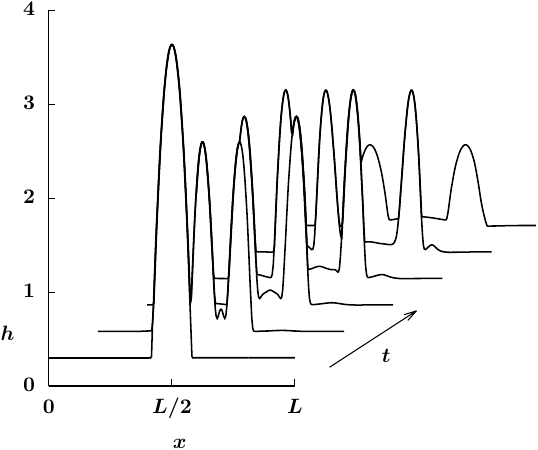}
% \caption{
% % Example \ref{ex2}: 
% Case 5 (Droplet splitting). 
% Time evolution of the controlled surface height along the north-east diagonal cutline shown in Figure \ref{fig:T35a}.
% }
% \label{fig:T35b}
% \end{figure}

%\subsubsection{Droplet flatten.}
%\subsubsection{Droplet sharpening.}
%\subsubsection{Droplet splitting.}
%\subsubsection{Droplet merging.}

\section{Discussion}
\label{sec5}
In this paper, we formulate and numerically compute mean field control problems for droplet dynamics governed by a thin-film equation with a non-mass-conserving flux. 
Our formulation starts with droplet dynamics, which are gradient flows of free energies in optimal transport metric spaces with nonlinear mobility functions. 
\blue{To demonstrate the application of our approach, we develop a lubrication model for a thin volatile liquid film with an active suspension, where the control is achieved through its activity field.}
We design and compute these mean field control problems of droplet dynamics using the primal-dual hybrid gradient algorithms with high-order finite element approximation schemes. Numerical examples of two-dimensional uncontrolled and controlled droplet dynamics demonstrate the effectiveness of the proposed control mechanisms.

{We expect that the developed mean field control mechanism could open the door to studying experimental design problems of droplet dynamics. To bridge the gap between the methodology and practical experimental implementation, one needs to address potential challenges arising from model formulation, parameter constraints, and experimental measurements. To apply the developed algorithm to other droplet control problems, we need general strategies for choosing free energies and mobility functions.} For example, one may design suitable free energies coupled with other external field constraints to adapt our proposed mean field control approach for droplet dynamics via temperature \citep{ji2021thermally} or electric fields \citep{chu2023electrohydrodynamics,eaker2016liquid}.
\blue{For controlling thin liquid films on general geometries, such as films flowing down a vertical cylinder \citep{ruyer2008modelling,ji2019dynamics} or on a spherical substrate \citep{greer2006fourth}, the gradient flow structure in \eqref{eq:model} needs to be adapted to account for the geometrical constraints.}
\blue{The current MFC formulation does not impose any explicit constraints on the control variable, but in practice, the feasible parameter range for the strength of the activity field (and controls for other applications) needs to be incorporated into the model by imposing control restrictions. The robustness of the developed algorithm to noise often found in experimental measurements in the initial configuration and control variables also needs to be addressed thoroughly.}
In simulations, one of the challenges is the nonconvex formulations of general mean field variational problems. Suitable regularization functionals are needed to maintain the stability of simulations. \blue{Moreover, our numerical results indicate that compared to directly solving the critical point system \eqref{newton} using the Newton-Raphson method, Algorithm \ref{alg:1} can be quite slow when the default parameters $\sigma_u=1$ and $\sigma_\phi=1$ are used to solve the approximated JKO scheme \eqref{saddleH-JKOa}. It would be interesting to study the convergence behaviors of JKO schemes 
and their improvement in optimization procedures.
We leave these physical modeling and numerical studies for future work. }

\section*{Acknowledgements}
G. Fu's work is supported by NSF DMS-2410740. 
H. Ji's work is supported by NSF DMS-2309774. 
W. Pazner’s work is supported by NSF RTG DMS-2136228 and an ORAU Ralph E. Powe Junior Faculty Enhancement Award.
W. Li's work is supported by AFOSR YIP award No. FA9550-23-1-0087, NSF DMS-2245097, and NSF RTG: 2038080.

\section*{Declaration of Interests}
The authors report no conflict of interest.

% \bibliographystyle{siam}
%    Insert the bibliography data here.
% \bibliography{references,HJKO,HPC,droplets}

% \section*{Appendix}
\appendix

% \section{Mathematical formulation of \eqref{volatileActive_main}} 
% \label{app:formulation}

\section{Proof of Proposition \ref{MFCD}}\label{appA}
In this appendix, we prove the Proposition \ref{MFCD}. 

\begin{proof}[Proof of Proposition \ref{MFCD}]
Denote the Lagrange multiplier of the MFC problem \eqref{mfcR2} as $\phi\colon [0, T]\times\Omega\rightarrow\mathbb{R}$. Consider the following saddle point problem:
\begin{equation}
\begin{split}
\inf_{\bmm, s, h, h_T}\sup_\phi \quad \mathcal{L}(\bmm, s, h, h_T, \phi),
\end{split}
\end{equation}
where
\begin{equation}
\begin{split}
\mathcal{L}(\bmm, s, h, h_T, \phi)=& \int_0^T \int_\Omega \Big[ \frac{\|\bmm\|^2}{2V_1(h)}+\frac{|s|^2}{2V_2(h)}+\phi\Big(\partial_t h+ \nabla\cdot \bmm-s\Big)\Big] dxdt\\
&+\int_{0}^T\Big[\frac{\beta^2}{2}\mathcal{I}(h)-\mathcal{F}(h)\Big] dt
+\mathcal{G}(h_T)+\beta\mathcal{E}(h_T).
\end{split}
\end{equation}
Assume $h>0$. By solving the saddle point problem of $\mathcal{L}$, i.e., taking the $L^2$ first variation of $\mathcal{L}$ on variables $\m$, $s$, $h$, $h_T$, we derive
\begin{equation*}
\left\{\begin{split}
&\frac{\delta}{\delta \bmm}\mathcal{L}=0,\\
& \frac{\delta}{\delta s}\mathcal{L}=0,\\
 &\frac{\delta}{\delta h}\mathcal{L}=0,\\
 &\frac{\delta}{\delta \phi}\mathcal{L}=0,\\
  &\frac{\delta}{\delta h_T}\mathcal{L}=0,\\
\end{split}\right.\quad\Rightarrow\quad\left\{\begin{split}
&\frac{\bmm}{V_1}=\nabla \phi,\\
& \frac{s}{V_2}=\phi,\\
 &-\frac{1}{2}\frac{\|\bmm\|^2}{V_1^2}V_1'-\frac{1}{2}\frac{|s|^2}{V_2^2}V_2'+\frac{\delta}{\delta h}\Big[\frac{\beta^2}{2}\mathcal{I}(h)-\mathcal{F}(h)\Big]-\partial_t\phi=0,\\
 &\partial_th+\nabla\cdot \bmm-s=0,\\
 & \phi_T+\frac{\delta}{\delta h_T}\Big(\mathcal{G}(h_T)+\beta\mathcal{E}(h_T)\Big)=0. 
\end{split}\right.
\end{equation*}
We finish the derivation of the mean field control system. 

We next derive the $L^2$ first variation of the Fisher information functional $\mathcal{I}$. Recall 
\begin{equation}
\mathcal{I}(h)=\frac{1}{2}
\int_\Omega \left(V_1(h)|\nabla P(h)|^2 + V_2(h)|P(h)|^2\right)dx, 
\end{equation}
where $P(h)=\Pi(h)-\pstar- \alpha^2{\Delta h}$, with the notation $\Delta h=\nabla^2h$.  
Consider a smooth test function $\delta h\in C^{\infty}([0,T]; \mathbb{R})$. Then 
\begin{equation}
\begin{split}
&\mathcal{I}(h+\epsilon \delta h)\\= &\frac{1}{2}
\int_\Omega \left(V_1(h+\epsilon\delta h)|\nabla P(h+\epsilon \delta h)|^2 + V_2(h+\epsilon \delta h)|P(h+\epsilon \delta h)|^2\right)dx\\
=&\quad \frac{1}{2}
\int_\Omega (V_1(h)+\epsilon V_1'(h)\delta h)|\nabla P(h)+\epsilon \nabla(\Pi'(h) \delta h-\alpha^2 \Delta \delta h)|^2 dx\\
&+\frac{1}{2}\int_\Omega (V_2(h)+\epsilon V_2'(h) \delta h)|P(h)+\epsilon (\Pi'(h)\delta h-\alpha^2\Delta\delta h)|^2dx+O(\epsilon^2)\\
=&\mathcal{I}(h)+\epsilon \int_\Omega \left(\frac{1}{2}V_1'(h)\delta h |\nabla P(h)|^2+V_1(h)\nabla P(h)\cdot \nabla(\Pi'(h) \delta h-\alpha^2 \Delta \delta h)\right)dx\\
&\qquad+\epsilon \int_\Omega \left(\frac{1}{2}V_2'(h)\delta h |P(h)|^2+V_2(h)P(h)(\Pi'(h) \delta h-\alpha^2 \Delta \delta h)\right)dx+O(\epsilon^2), 
\end{split}
\end{equation}
where $O(\epsilon^2)$ is the asymptotic notation. Using the definition of $L^2$ first variation operator, 
\begin{equation}
\mathcal{I}(h+\epsilon \delta h)-\mathcal{I}(h)= \epsilon \int_\Omega \frac{\delta }{\delta h}\mathcal{I}(h)\delta h dx+O(\epsilon^2), 
\end{equation}
and applying the integration by parts, we derive the $L^2$ first variation of functional $\mathcal{I}$. 
\end{proof}
\red{
\section{Additional numerical experiments}\label{appB}
In this appendix, we numerically study the convergence of the proposed approximated JKO scheme \eqref{saddleH-JKOa} under spatial/temporal mesh refinements. 
We consider the same setup as in Section \ref{ex1} to solve the thin-film equations \eqref{eq:model} in both one- and two-dimensions. 
Specifically, we compute the $L^2$-error of the surface height 
$\|h_h-h_{ref}\|_{L^2(\Omega)}$ at time $t= 0.05$, where the reference solution $h_{ref}$ is obtained using the finite element scheme \eqref{FEM} with polynomial degree $k+1=4$ on a fine mesh with $128^d$ uniform elements, where $d\in\{1,2\}$ is the spatial dimension, and the time step size is $\Delta t = 10^{-4}$.}

\red{
Table \ref{tab1} presents the history of convergence for the $L^2$-norm error under mesh refinements for the scheme \eqref{saddleH-JKOa} with the polynomial degree $k\in\{0, 1, 2\}$.
When the polynomial degree is $k=0$, we use a sequence of uniform meshes with $N^d$ elements for $N\in\{32, 64, 128\}$ and take a time step size of
$\Delta t = 32/N\times 10^{-3}$. The first-order convergence is observed in this case. 
When the polynomial degree is $k\in\{1,2\}$, we use a sequence of uniform meshes with size $N^d$ for $N\in\{16, 32, 64\}$ and take a time step size of
$\Delta t = (16/N)^{k+1}\times 10^{-3}$. The second-order convergence is observed for $k=1$, and the third-order convergence is observed for $k=2$.
These results suggest the proposed scheme \eqref{saddleH-JKOa} achieves a first-order convergence in time and a $(k+1)$-th order of convergence in space
when polynomials of degree $k\ge 0$ are used.
\begin{table}
    \centering
    \begin{tabular}{c c c|c  c| c c}
        $k$ & $N$ & $\Delta t$ & 1D $L^2$-error & rate & 2D $L^2$-error & rate\\[.2ex]
   \hline
   &    32   &1e-3  & 1.61e-2 & --  & 4.49e-2 & --\\[.2ex]
 0 &    64   &5e-4  & 6.82e-3 & 1.24& 1.72e-2 & 1.39\\[.2ex]
   &    128  &2.5e-4& 3.13e-3 & 1.12& 6.98e-3 & 1.30\\
   \hline
   &    16   &1e-3    & 7.79e-3 & --  &3.13e-2 & -- \\[.2ex]
 1 &    32   &2.5e-4  & 1.18e-3 & 2.72&4.77e-3 & 2.71\\[.2ex]
   &    64   &6.25e-5 & 2.63e-4 & 2.17&1.05e-3 & 2.18\\
   \hline
   &    16   &1e-3      & 5.26e-3 & --  &2.37e-2 & --\\[.2ex]
 2 &    32   &1.25e-4   & 4.82e-4 & 3.45&2.05e-3 & 3.53\\[.2ex]
   &    64   &1.56e-5 & 5.53e-5 & 3.12  &2.30e-4  & 3.15 \\
   \hline
    \end{tabular}
    \caption{History of convergence for the $L^2$-error in surface height at time $t=0.05$. }
    \label{tab1}
\end{table}
}

\red{
We conclude this appendix by presenting the evolution of the total mass, 
$\mathcal{M}(t) = \int_{\Omega} h~d\bm x$, when applying the scheme \eqref{saddleH-JKOa} to solve the model \eqref{eq:model}.
Recall that, from \eqref{mc}, the model \eqref{eq:model} 
allows the total mass to change over time due to non-mass-conserving contributions when the phase change rate $\gamma >0$, while the total mass is conserved for $\gamma = 0$.
To highlight the built-in mass-conservation property of the scheme \eqref{saddleH-JKOa}, we consider the 
two-dimensional setup in Section \ref{ex1} with the phase change rate $\gamma = 0$. 
For this problem, the total mass remains at $\mathcal{M}(t)=1$ for all time since the initial data satisfies $\mathcal{M}=1$.
Figure~\ref{fig:mc} presents the time evolution of the total mass error $|\mathcal{M}(t)-1|$  using the polynomial degree $k=3$
on a $32^2$ mesh and the time step size $\Delta t = 10^{-3}$. 
It is observed that the total mass error is within $5\times 10^{-14}$.
\begin{figure}
    \centering
    \begin{tikzpicture}
        \begin{axis}[
            title={Total Mass Error vs Time},
            xlabel={Time},
            ylabel={$|\mathcal{M}(t)-1|$},
            grid=major,
            width=10cm,
            height=4cm,
        ]
        \addplot
        table {data.txt};  % This reads the data from your file
        \end{axis}
    \end{tikzpicture}
    \caption{Evolution of the total mass error $|\mathcal{M}(t)-1|$ over time for 
    the scheme \eqref{saddleH-JKOa} applied to the  model \eqref{eq:model}
    with $\gamma =0$, 
    the dimension $d=2$, a polynomial of degree $k=3$, a rectangular mesh with $32^2$ elements, and a time step size $\Delta t = 10^{-3}$.
    }
    \label{fig:mc}
\end{figure}
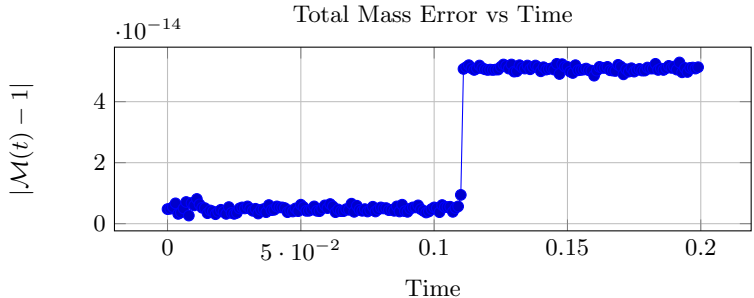
}
% \begin{table}
%     \centering
%     \begin{tabular}{c|c  c| c c}
%         $\Delta t$ & $L^2$-error in 1D & rate & $L^2$-error in 2D & rate\\[.2ex]
%    \hline
%        1e-3   & 3.75e-3 &--\\[.2ex]
%        5e-4   & 2.13e-3 & 0.82\\[.2ex]
%        2.5e-4 & 1.14e-3 & 0.90\\[.2ex]
%        1.25e-4& 5.86e-4 & 0.95\\
%     \end{tabular}
%     \caption{History of convergence for the $L^2$-error in surface height at time $t=0.2$ 
%     for the scheme \eqref{saddleH-JKOa} under temporal mesh refinements. 
%    Spatial discretization: polynomial degree $k=3$ on a uniform $128^d$ mesh.}
%     \label{tab1}
% \end{table}
% \begin{table}
%     \centering
%     \begin{tabular}{c c|c  c| c c}
%     $k$&    $N$ & $L^2$-error in 1D & rate & $L^2$-error in 2D & rate\\[.2ex]
%    \hline
%    &    64    & 7.75e-2 & --\\[.2ex]
%  0  &    128  & 3.84e-2 & 1.01\\[.2ex]
%    &    256   & 1.91e-2 & 1.01\\
%    \hline
%    &    32   & 5.30e-2 & --\\[.2ex]
%  1  &    64  & 1.22e-2 & 2.12\\[.2ex]
%    &    128  & 1.47e-3 & 3.05\\
%    \hline
%    &    16   & 1.96e-2 & --\\[.2ex]
%  3  &   32   & 1.90e-3 & 3.36\\[.2ex]
%    &    64   & 1.70e-4 & 3.48\\
%     \end{tabular}
%     \caption{History of convergence for the $L^2$-error in surface height at time $t=0.2$ 
%     for the scheme \eqref{saddleH-JKOa} 
%     with polynomial degree $k$ on meshes with uniform $N^d$ elements under spatial mesh refinements. 
%    Time step size: $\Delta t = 10^{-4}$ for $k=0$
%    and $k=1$, $\Delta t = 10^{-5}$ for $k=3$.}
%     \label{tab2}
% \end{table}

\bibliographystyle{jfm}
%    Insert the bibliography data here.
\bibliography{references,HJKO,HPC,droplets}
\end{document}